\newtheorem{thm}{Theorem}
\newtheorem*{thm*}{Theorem}
\newtheorem{lemma}[thm]{Lemma}
\newtheorem{corollary}[thm]{Corollary}
\newtheorem{conjecture}[thm]{Conjecture}
\newtheorem{prop}[thm]{Proposition}
\newtheorem*{prop*}{Proposition}
\theoremstyle{definition}
\newtheorem{defn}[thm]{Definition}
\newtheorem{remark}[thm]{Remark}
\numberwithin{thm}{section}
\newcommand{\F}{\mathbb{F}}
\newcommand{\Z}{\mathbb{Z}}
\newcommand{\C}{\mathbb{C}}
\newcommand{\R}{\mathbb{R}}
\newcommand{\Q}{\mathbb{Q}}
\newcommand{\N}{\mathbb{N}}
\newcommand{\bx}{\mathbf{x}}
\newcommand{\A}{\mathbb{A}}
\newcommand{\SL}{\textrm{SL}}
\newcommand{\GL}{\textrm{GL}}
\newcommand{\SO}{\textrm{SO}}
\newcommand{\Sym}{\textrm{Sym}}
\newcommand{\sing}{\mathfrak{S}}
\newcommand{\calE}{\mathcal{E}}
\newcommand{\height}{\textrm{ht}}
\newcommand{\calB}{\mathcal{B}}
\newcommand{\calC}{\mathcal{C}}
\newcommand{\calG}{\mathcal{G}}
\newcommand{\calR}{\mathcal{R}}
\newcommand{\calO}{\mathcal{O}}
\newcommand{\calF}{\mathcal{F}}
\newcommand{\scrP}{\mathscr{P}}
\newcommand{\Mat}{\textrm{Mat}}
\newcommand{\vol}{\textrm{vol}}
\newcommand{\Pff}{\textrm{Pff}}
\newcommand{\Skew}{\textrm{Skew}}
\newcommand{\std}{\textrm{std}}
\newcommand{\legendre}[2]{\genfrac{(}{)}{}{}{#1}{#2}}
\newcommand{\PGL}{\textrm{PGL}}
\newcommand{\PO}{\textrm{PO}}
\title{Prime number theorems for polynomials from homogeneous dynamics}
\author[G. Kotsovolis]{Giorgos Kotsovolis}
\address{Department of Mathematics, Princeton University, Princeton, NJ 08540}
\email{gk13@princeton.edu}
\author[K. Woo]{Katharine Woo}
\address{Department of Mathematics, Princeton University, Princeton, NJ 08540}
\email{khwoo@princeton.edu}
\date{\today}
\begin{document}

\maketitle

\begin{center}
{\em Dedicated to Peter Sarnak}\\
{\em on the occasion of his 70th birthday}
\end{center}

\begin{abstract}
    We establish a new class of examples of the multivariate Bateman-Horn conjecture by using tools from dynamics. These cases include the determinant polynomial on the space of $n\times n$ matrices, the Pfaffian on the space of skew-symmetric $2n\times 2n$ matrices, and the determinant polynomial on the space of symmetric $n\times n$ matrices.  
    In particular, let $(V,F)$ be any pair among the following: $(\Mat_n, \det)$, $(\Skew_{2n},\Pff)$, and $(\Sym_n, \det).$ We then obtain an asymptotic for  $$\pi_{V,F}(T)= \#\{v\in V: \max(|v_i|)\leq T, F(v) \text{ is prime}\},$$
    that matches the Bateman-Horn prediction. 
    
    The key ingredients of our proof are an asymptotic count for integral points on the level sets of $F$ given by Linnik equidistribution, a geometric approximation of the box by cones, and an upper bound sieve to bound the number of prime values missed by the approximation. In the case of the determinant polynomial on symmetric matrices, we must also use the Siegel mass formula to compute the product of local densities for the main term.

\end{abstract}

\begin{small}
\setcounter{tocdepth}{1}
\tableofcontents
\end{small}

\section{Introduction}
Let $f(x)\in \Z[x]$ be an irreducible monic polynomial. It was conjectured by Bunyakovsky in 1857 that $f(n)$ will be prime for infinitely many $n\in \N$. This conjecture was made quantitative by Bateman and Horn in \cite{BatemanHorn}, and is now known as the Bateman-Horn conjecture:
\begin{conjecture}[The Bateman-Horn conjecture]
    Let $f(x)$ be a nonconstant irreducible monic polynomial over $\Q$. Then as $N\rightarrow\infty:$ $$\#\{n\in [1,N]: f(n) \textrm{ is prime}\} \sim \sing \cdot \int_{0}^N \frac{dx}{\log^+(f(x))},$$
    where we write $$\log^+(y) = \begin{cases}
        \max(2,\log(y)), & y> 0 \\ \infty, & y\leq 0,
    \end{cases}$$
    and where the singular series $\sing$ is defined as a product of local densities: $$\sing = \prod_{p}\left(\frac{1-p^{-1}\#\{x\in \F_p: f(x)\equiv 0 \bmod p\}}{1-1/p}\right).$$
\end{conjecture}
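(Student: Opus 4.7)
The Bateman-Horn conjecture is a famously open problem: no case with $\deg f \geq 2$ is known, and for $\deg f = 1$ it reduces to Dirichlet's theorem on primes in arithmetic progressions together with the prime number theorem in progressions. Any ``plan'' can therefore only sketch the heuristic, the natural lines of attack, and precisely where they break down. The heuristic itself is standard: the prime number theorem suggests that an integer of size $|f(n)|$ is prime with probability about $1/\log|f(n)|$, producing the integral factor. Each prime $p$ then contributes a local correction comparing the observed density of $n\in\Z/p\Z$ with $p\nmid f(n)$, namely $1 - p^{-1}\#\{x\in\F_p : f(x)\equiv 0 \pmod p\}$, against the ``random'' density $1-1/p$. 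Convergence of the resulting Euler product $\sing$ is nontrivial and ultimately rests on irreducibility of $f$ via Chebotarev; this is what justifies writing the conjecture with an honest constant rather than a partial product.

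A concrete attack would proceed in three stages. First, an \emph{upper bound of the right order}: Selberg's $\Lambda^2$ sieve applied to $\{f(n)\}_{n\leq N}$ immediately yields the asymptotic upper bound predicted by the conjecture, up to a multiplicative constant depending on $\deg f$. This is unconditional and correct in order of magnitude. Second, a \emph{lower bound of the right order}: Chen-style weighted sieves produce infinitely many $n$ with $f(n)$ having at most a bounded number of prime factors, matching the upper bound in structure but not in the sharpness of primality. Third, \emph{isolation of primes from almost-primes}: one would introduce a combinatorial identity for the indicator of primes (Vaughan, Heath-Brown, or Selberg orthogonality), reducing matters to bilinear sums in $f(n)$. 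For $\deg f = 1$ the required input is the zero-free region and nonvanishing of Dirichlet $L$-functions at $s=1$.

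The single immovable obstacle --- and the reason the conjecture remains open for $\deg f \geq 2$ --- is the \emph{parity problem} of Selberg: sieve methods alone cannot distinguish integers with an even number of prime factors from those with an odd number, while $\sing$ is precisely the prediction for the former. Overcoming it requires arithmetic information beyond congruence data, typically an analytic object (an $L$-function, a trace formula, or an equidistribution statement) attached to the sequence $\{f(n)\}$. In the multivariate setting treated later in the paper, such input is supplied by Linnik equidistribution on the level sets $\{F=m\}$ together with the geometric approximation of the box by cones, which in effect averages over a positive-dimensional orbit and bypasses parity. No analogous input is known in the one-variable setting, so I would expect any genuine attack on the conjecture as stated to require a fundamentally new idea, and my ``plan'' for the single-variable statement terminates at the parity barrier.
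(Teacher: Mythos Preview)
Your assessment is correct in substance: the statement is labelled as a \emph{Conjecture} in the paper, not a theorem, and the paper makes no attempt to prove it. Indeed, the paper explicitly says ``the Bateman-Horn conjecture has yet to be solved for any nonlinear polynomial'' and uses the statement only as motivation for the multivariate analogues (Conjectures~\ref{conj: DS Bateman-Horn} and~\ref{conj: Bateman-Horn}) that it goes on to establish in the specific cases $(\Mat_n,\det)$, $(\Skew_{2n},\Pff)$, and $(\Sym_n,\det)$. So there is no ``paper's own proof'' to compare against, and your recognition that no proof can be supplied is the right response.

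One small refinement: your closing remark that the multivariate results ``bypass parity'' via Linnik equidistribution is accurate in spirit, but the mechanism is slightly different from what you describe. The paper does not feed equidistribution into a sieve to beat parity; rather, the Linnik-type input of Oh and Eskin--Oh gives an \emph{asymptotic} count of integer points on each level set $V_m$, so summing over $m$ prime reduces directly to the prime number theorem for the integers --- the sieve (Proposition~\ref{prop: upper bound}) enters only to control the small exceptional set in the cone approximation. This is worth internalizing, since it clarifies exactly why the one-variable case is harder: there the level sets are finite and carry no group action to exploit.
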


While the Bateman-Horn conjecture has yet to be solved for any nonlinear polynomial, mathematicians have made substantial progress proving prime number theorems for multivariate polynomials. 
In the Appendix of \cite{DS}, Destagnol and Sofos wrote down the expected asymptotic for the multivariate Bateman-Horn.
\begin{conjecture}[The Bateman-Horn conjecture for multivariate homogeneous polynomials]\label{conj: DS Bateman-Horn}
Let $F(\bx)$ be a nonconstant irreducible polynomial over $\Q$ in $n$ variables. For a box $\mathcal{B}\subset \R^n$, we denote $T\mathcal{B} := \{t\bx: t\in [0,T], \bx\in \mathcal{B}\}.$ Then, we have that as $T\rightarrow \infty$, $$\#\{\bx\in T\mathcal{B}: F(\bx) \textrm{ is prime}\} \sim \sing \cdot \int_{T\mathcal{B}} \frac{d\bx}{\log^+( F(\bx))},$$
where the singular series $\sing$ is defined as a product of local densities: $$\sing = \prod_{p} \left(\frac{1-p^{-n} \#\{\bx\in \F_p^n: F(\bx)\equiv 0 \bmod p\}}{1-1/p}\right).$$ 
    
\end{conjecture}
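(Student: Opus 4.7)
The plan is to prove Conjecture \ref{conj: DS Bateman-Horn} for the three specific pairs $(V,F)$ listed in the abstract by reducing the count to an asymptotic count of integral points on the level sets $V_m = \{v \in V : F(v) = m\}$, which are affine symmetric varieties under the action of a classical group $G$ (roughly $\SL_n\times\SL_n$ for $\Mat_n$, $\SL_{2n}$ for $\Skew_{2n}$, $\SL_n$ for $\Sym_n$). First I would partition
\[
\pi_{V,F}(T) = \sum_{p \text{ prime}} \#\{v \in T\calB \cap V(\Z) : F(v) = p\},
\]
so the main task becomes an equidistribution problem for $G(\Z)$-orbits on $V_m(\Z)$ lying in the dilate $T\calB$.

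The geometric heart of the argument is to approximate $T\calB$ by a (disjoint) union of cones $C$ with apex at the origin. Cones behave well under the homogeneous action of $G$ and under scaling in the $F$-direction: because $F$ is homogeneous of degree $d$, the intersection $C \cap V_m$ rescales as $m$ varies, and Linnik-style equidistribution (via mixing of diagonal flows on $G/\Gamma$ and the closed orbits of the stabilizer $H_v$) yields an asymptotic
\[
\#\bigl(V_m(\Z) \cap C\bigr) \sim c_\infty(C,m) \cdot \prod_{p} \alpha_p(m)
\]
in terms of an archimedean volume and a product of $p$-adic densities (for $\Sym_n$ this is precisely the Siegel mass formula, and for the other cases analogous mass-formula identifications are available). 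Summing over primes $m = q$ using Abel summation/PNT, then summing over the cones in the decomposition and passing to a limit as the approximation refines, produces the Bateman-Horn main term: the archimedean part reconstructs $\int_{T\calB} d\bx/\log^+(F(\bx))$, while the $p$-adic densities averaged over residues of primes assemble into the singular series $\sing$.

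The remaining issue is the region of $T\calB$ not captured by the cone approximation. Since we do not have an asymptotic there, I would control it via an upper bound sieve (e.g.\ Selberg or a large sieve variant) applied uniformly across thin shells of $F$-values, giving a bound of the correct order of magnitude $\ll (\vol \text{ of sliver})/\log T$ with an implicit constant independent of the refinement. Letting the cone decomposition become arbitrarily fine then squeezes this error to $o(T^n/\log T)$.

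The main obstacle, I expect, will be twofold. First, making the Linnik equidistribution on each level set $V_m$ effective and uniform in $m$ as $m$ grows with $T$ — any $m$-dependence must be controllable so that the sum over primes $p \leq T^d$ can be executed. Second, matching the product of local densities coming from equidistribution/mass formula with the Bateman-Horn singular series, including correctly accounting for primes dividing the discriminant, reducible fibers, and (for $\Sym_n$) the genus-theoretic subtleties in Siegel's formula; this is where the Siegel mass formula input becomes essential for $(\Sym_n, \det)$.
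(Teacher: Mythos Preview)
Your proposal is correct and follows essentially the same route as the paper: level-set decomposition summed over primes, approximation of the box by cones (the paper's $\epsilon$-cuttings in \S\ref{sec: eps cutting}), Linnik equidistribution on cones via Oh's Hardy--Littlewood asymptotic (Theorem~\ref{Oh's Theorem}) together with Einsiedler--Margulis--Mohammadi--Venkatesh for the positive-definite symmetric orbit, an upper bound sieve on the residual sliver (Proposition~\ref{prop: upper bound}), and the Siegel mass formula for $(\Sym_n,\det)$. One minor point: you flag effectivity/uniformity in $m$ of the equidistribution as a main obstacle, but the paper gets by with the ineffective $o_\Omega(1)$ of Oh's theorem, since on a \emph{fixed} cone $\Omega$ the sum $\sum_{p\le T^{\deg F}} p^{\dim V/\deg F-1}\sing_p$ is dominated by large $p$ where the error is already small; the genuine work is instead the stability $\sing_p=\sing_{V,F}(1+O(p^{-\delta}))$ (Lemmas~\ref{lem: stability of constants}, \ref{lem: stability Pff}, and Proposition~\ref{prop: evaluation of the constant symmetric}).
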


We remark that in the multivariate setting it may be more natural to look at integers that generate prime ideals $(p)\subset \Z$, rather than positive primes. For instance, $$F(x,y) = 1-x^2-y^2$$
is an absolutely irreducible polynomial with no local restrictions that does not take any positive prime values. Additionally, we would like to consider regions more general than boxes. As such, we write down another formulation of the Bateman-Horn conjecture. 
\begin{conjecture}\label{conj: Bateman-Horn}
Let $F(\bx)$ be a nonconstant irreducible homogeneous polynomial over $\Q$ in $n$ variables. For any convex compact region $\mathcal{R}_0\subset \R^n$, we denote $T\calR_0 := \{t\bx: t\in [0,T], \bx\in \mathcal{R}_0\}.$ Then, we have that as $T\rightarrow \infty$, $$\#\{\bx\in T\mathcal{R}_0:(F(\bx))\subset\Z \textrm{ is a prime ideal}\} \sim \sing \cdot \int_{T\mathcal{R}_0} \frac{d\bx}{\log^+(\vert F(\bx)\vert)},$$
where $$\sing = \prod_{p} \left(\frac{1-p^{-n} \#\{\bx\in \F_p^n: F(\bx)\equiv 0 \bmod p\}}{1-1/p}\right).$$
    
\end{conjecture}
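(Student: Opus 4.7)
The plan is to prove Conjecture~\ref{conj: Bateman-Horn} for the pairs $(V,F)$ listed in the abstract by combining a sharp count of integer points on the level sets $\{F=m\}$, supplied by Linnik-type equidistribution on homogeneous spaces, with a sieve-based control of the error. As a first step I would rewrite
\[
\#\{v\in T\calR_0\cap V(\Z):(F(v))\text{ prime}\}
= \sum_{|m|\leq c T^{\deg F}} \mathbf{1}_{(m)\text{ prime}}\cdot N_F(m,T\calR_0),
\]
where $N_F(m,T\calR_0):=\#\{v\in T\calR_0\cap V(\Z):F(v)=m\}$. For each of our three pairs $(V,F)$, the level set $V_m:=\{F=m\}$ is a homogeneous variety under a semisimple group $G$ (respectively $\SL_n\times \SL_n$, $\SL_{2n}$, and $\SL_n$ acting by conjugation/twisted multiplication), and $V_m(\Z)$ splits into finitely many $G(\Z)$-orbits whose equidistribution on $V_m(\R)$ with respect to the canonical $G(\R)$-invariant measure is exactly the content of the Linnik/Eskin--McMullen/Duke--Rudnick--Sarnak theorems. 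This gives, with an effective rate in $m$,
\[
N_F(m,T\calR_0)\;\sim\;c_m\cdot\mu_m\bigl(T\calR_0\cap V_m(\R)\bigr),
\]
where $c_m$ is the product of local densities attached to the equation $F=m$.

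Next comes the geometric step. The region $T\calR_0$ is not a cone from the origin, but the measure $\mu_m$ interacts well with cones because of the homogeneity $F(tv)=t^{\deg F}F(v)$. I would approximate $\calR_0$ from inside and outside by finite unions of small convex cones $C_i$ with vertex at $0$. Inside each $TC_i$ the count $\sum_m \mathbf{1}_{\text{prime}}(m) N_F(m,TC_i)$ decouples into a radial integral $\int_0^T t^{n-1}\,dt$ weighted against a fixed cross-sectional measure, and summation over primes $m$ via the prime number theorem (with enough uniformity in $m$, available from the Linnik error term and Siegel--Walfisz) produces exactly $\sing\cdot \int_{TC_i} d\bx/\log^+|F(\bx)|$. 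Summing over the cones reassembles the predicted main term $\sing\cdot \int_{T\calR_0}d\bx/\log^+|F(\bx)|$.

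The main obstacle, and where most of the technical work must go, is to show that the cone approximation does not lose too much in the limit. The danger is twofold: the boundary sliver of $\calR_0$ not covered by the chosen cones, and the tail values of $m$ for which Linnik equidistribution is weak or fails. I would attack both uniformly with an \emph{upper bound sieve} of Selberg type applied directly to $\#\{v\in T\calR_0\cap V(\Z):F(v)\text{ prime}\}$. This will yield an estimate
\[
\#\{v\in T\calR\cap V(\Z):F(v)\text{ prime}\}\;\ll\;\sing\cdot \vol(T\calR)/\log T
\]
for any reasonable subregion $\calR$, matching the main term up to a constant. Applied to the thin boundary region between the inner and outer cone approximations, whose volume can be made $\varepsilon\cdot \vol(\calR_0)$, this contributes at most $\varepsilon$ times the main term; letting $\varepsilon\to 0$ and choosing the cone approximation accordingly closes the argument. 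The sieve also absorbs the contribution of $m$ with very small prime factors, where the Linnik count has no useful error saving.

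Finally, for the case $(V,F)=(\Sym_n,\det)$ one further difficulty arises: the density $c_m$ produced by the equidistribution is phrased as an adelic orbital volume rather than as the product of local densities appearing in $\sing$. To match the two, I would invoke the Siegel mass formula for quadratic forms, which expresses this orbital volume intrinsically as $\prod_p (\text{local density at } p)$, thereby identifying the main term with $\sing\cdot \int_{T\calR_0}d\bx/\log^+|\det(\bx)|$ and completing the verification of Conjecture~\ref{conj: Bateman-Horn} in the three cases of interest.
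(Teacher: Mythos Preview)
Your plan is essentially the paper's: Linnik-type equidistribution on level sets yields the prime count on cones, a cone approximation (the paper's $\epsilon$-cuttings) covers $\calR_0$ up to a set of small volume, an upper-bound sieve handles that leftover, and the Siegel mass formula identifies the local densities in the symmetric case. Two refinements the paper makes that your sketch slightly mis-states: the equidistribution inputs (Oh's theorem, and for the \emph{definite} symmetric orbit Einsiedler--Margulis--Mohammadi--Venkatesh, since the compact stabiliser $\SO_n$ blocks Oh's framework there) are \emph{ineffective} in $m$, so rather than an ``effective rate'' the paper proves a separate stability lemma $\sing_p=\sing\cdot(1+O(p^{-1/2}))$ and then only the ordinary prime number theorem---not Siegel--Walfisz---is needed to sum over $p$.
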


In this paper, we will prove the multivariate Bateman-Horn conjecture for a new class of polynomials. Specifically, in Theorem \ref{thm: PNT det Mat}, we prove a prime number theorem for the determinant polynomial on $n\times n$ matrices;  in Theorem \ref{thm: PNT Pff Skew}, we prove a prime number theorem for the Pffafian on $2n\times 2n$ skew symmetric matrices; in Theorem \ref{thm: PNT det Sym}, we prove the more involved prime number theorem for the determinant polynomial on $n\times n$ symmetric matrices. A crucial ingredient in these proofs is the following uniform upper bound (Proposition \ref{prop: upper bound}) for any absolutely irreducible homogeneous polynomial $F$:
$$\limsup_{\calR_0\subset \R^n} \lim_{T\rightarrow\infty} \frac{\#\{\bx\in T\calR_0(\Z): F(\bx)\textrm{ is prime}\}}{|T\calR_0|/\log(T)} \leq c_F.$$

Before introducing these results, we will give a brief survey of previous known results for prime number theorems for multivariate polynomials.

\subsection{History of multivariate Bateman-Horn conjecture}
First, for polynomials $F\in \Z[x_1,...,x_n]$ with $n$ sufficiently large in the degree $d$, the Hardy-Littlewood circle method can produce such asymptotics. In particular, for $n > (d-1)2^{d-1}$, the Bateman-Horn conjecture was proved by Destagnol and Sofos in \cite{DS} using the circle method. If $F$ is diagonal, then an even better bound exists: for $n\geq \lceil d\log(d) + 4.21\rceil$, Br\"udern and Wooley's work \cite{BrudernWooley} on Waring's problem gives the Bateman-Horn conjecture. 

Beyond the circle method, the other known cases of Bateman-Horn occur when there is extra algebraic structure. In \cite{FI1} and \cite{FI2}, Friedlander and Iwaniec establish that there are infinitely many primes of the form $x^2+y^4$. Later, Heath-Brown \cite{HB} solved the case of $x^3+2y^3$; this was then followed by Heath-Brown and Moroz \cite{HBMoroz} who finished off the case of irreducible binary cubic forms. Finally, Maynard \cite{MaynardNormForm} used the structure of norm forms to generalize Heath-Brown and Moroz's result and handled incomplete norm forms of degree $n$ in $\geq 15n/22$ variables. We remark that the results of \cite{FI1}, \cite{FI2}, \cite{HB}, and \cite{HBMoroz} all use the structure of norm forms and the structure of prime factorization in number fields in their respective proofs. We also note that all of these polynomials factor over $\overline{\Q}$, whereas our polynomials will be irreducible over $\overline{\Q}$.

\subsection{Statements of the main theorems}
In this section, we will present prime number theorems for three specific polynomials.

First, we consider the determinant polynomial $\det$ on $n\times n$ integral matrices. This polynomial $\det(x_{11},x_{12},...,x_{nn})$ is a non-diagonal homogeneous polynomial in $n^2$ variables of degree $n$ and is irreducible over $\overline{\Q}.$ For $n\geq 4$, a prime number theorem for this polynomial is beyond what is known from the circle method. However, by utilizing results from dynamics, such as the equidistribution of Hecke orbits, along with an approximation argument, we can achieve the following prime number theorem. 

\begin{thm}\label{thm: PNT det Mat}
    Define the following prime counting function: 
    $$\pi_{\det}(T) := \#\{A\in \Mat_n(\Z): \|A\|\leq T, \det(A) \text{ is prime}\},$$
    where $\|A\|$ denotes the $L^\infty$-norm. As $T\rightarrow\infty$, we have that 
    $$\pi_{\det}(T) = (1+o(1)) \cdot \prod_{j=2}^n \zeta(j)^{-1} \int_{\|X\|\leq T} \frac{1}{\log^+(\det(X))}dX,$$
    where $dX$ is the Euclidean measure on $\Mat_n(\R)\cong \R^{n^2}.$ Moreover, Conjecture \ref{conj: Bateman-Horn} holds for the determinant polynomial. 
\end{thm}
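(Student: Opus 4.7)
The plan is to stratify by the prime value of the determinant:
$$\pi_{\det}(T) = \sum_{p\text{ prime}} N_p(T), \qquad N_p(T) := \#\{X \in \Mat_n(\Z) : \|X\| \leq T,\ \det X = p\},$$
where $p$ ranges over positive primes up to $O(T^n)$ (by Hadamard's inequality). I would asymptotically evaluate each $N_p(T)$ via equidistribution on the affine variety $V_p = \{\det = p\} \subset \Mat_n(\R)$, and then sum over primes using partial summation with the prime number theorem.

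For $p$ prime, the integer points $V_p(\Z) = \Mat_n(\Z) \cap V_p$ form a single orbit under the $\SL_n(\Z) \times \SL_n(\Z)$-action $(g_1, g_2) \cdot X = g_1 X g_2^{-1}$, since the only Smith normal form with determinant $p$ is $\operatorname{diag}(1, \ldots, 1, p)$. Linnik-style equidistribution for this Hecke-type orbit (e.g.\ Eskin--Mozes--Shah) should yield, for nice regions $\Omega \subset V_p$,
$$\#\{X \in V_p(\Z) : X \in \Omega\} \;\sim\; c \cdot \vol_{V_p}(\Omega),$$
with an explicit constant $c$ coming from a product of local orbital densities. Applying this with $\Omega = V_p \cap \{\|X\|\leq T\}$ gives the desired asymptotic for $N_p(T)$ in terms of a level-set volume.

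The geometric complication is that the box $\{\|X\|\le T\}$ is not homogeneous with respect to the level sets of $\det$. My plan is to decompose the box into (or approximate it by) finitely many small cones emanating from the origin. On each cone $C$, homogeneity of $\det$ gives $C \cap V_p = p^{1/n} \cdot (C \cap V_1)$, reducing the volume computation on each piece to a fixed reference slice that scales explicitly in $p$. Summing the $N_p(T)$ asymptotics over primes and applying partial summation with the prime number theorem converts the sum $\sum_p \vol_{V_p}(V_p \cap \{\|X\|\le T\})/\log p$ into the integral $\int_{\|X\|\le T} dX / \log^+ \det X$ (essentially by coarea). The singular series falls out of the local density constants: at each prime $q$ the density $|\GL_n(\F_q)|/q^{n^2} = \prod_{j=1}^n(1 - q^{-j})$, divided by the Bateman--Horn normalizer $(1-1/q)$, gives the local factor $\prod_{j=2}^n(1 - q^{-j})$, whose product over $q$ is exactly $\prod_{j=2}^n \zeta(j)^{-1}$, matching the theorem statement.

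The main obstacle, I expect, is uniformity across primes. For primes $p$ with $T/p^{1/n}$ bounded, equidistribution is not directly effective, and moreover the cone approximation necessarily leaves uncovered ``wedges'' near the boundary of the box. Both contribute to the error, and for both I would invoke the uniform upper bound of Proposition \ref{prop: upper bound} to show that the combined contribution is at most $\varepsilon \cdot T^{n^2}/\log T$ for any fixed $\varepsilon > 0$, once the cone decomposition is sufficiently fine. The equidistribution then handles the bulk, while the uniform sieve bound absorbs the boundary contributions---this balance between dynamical input and sieve input is precisely the structure advertised in the abstract, and seems to be the essential new idea that makes the argument go through.
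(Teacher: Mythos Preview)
Your proposal is correct and follows essentially the same approach as the paper: stratify by the prime value of $\det$, use Linnik-type equidistribution (the paper cites Oh's Hardy--Littlewood formulation building on Eskin--Oh) to count integer points on each level set intersected with a cone, approximate the box by finitely many cones ($\epsilon$-cuttings), and control the leftover region with the upper bound sieve of Proposition~\ref{prop: upper bound}. The only organizational difference is that the paper first packages the sum over primes into a complete prime number theorem on each fixed cone $T\Omega$ (Theorem~\ref{PNT cones Hardy-Littlewood}) and only then performs the cone approximation, which sidesteps the uniformity-in-$p$ worry you flag since the $o_\Omega(1)$ error is for the full sum over primes rather than prime-by-prime.
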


\begin{remark}Our methods lead us to deal with even more general regions, as will be discussed in \S4 and \S5. In particular, if $\calR_0\subset \Mat_n(\R)$ satisfies that for $\bx\in \calR_0$, and $t\in [0,1]$, we have that $t\bx\in \calR_0$, as well as that the boundary of $\calR_0$ has Euclidean measure zero, then we have that $$\#\{A\in \Mat_n(\Z)\cap T\calR_0: \det(A) \text{ is prime}\} \sim \prod_{j=2}^n \zeta(j)^{-1} \int_{T\calR_0} \frac{1}{\log^+(\det(X))}dX.$$ This fact will be a consequence of Theorem \ref{PNT cones Hardy-Littlewood} and Theorem \ref{thm: cones to boxes}. Some examples of such $\calR_0$ include the star-shaped region and compact sets $\calR_0$ defining a sector.  
\end{remark}

We also observe that this result implies that $$\pi_{\det}(T) \sim \frac{1}{2}\cdot \prod_{j=2}^n \zeta(j)^{-1} \cdot \frac{2^{n^2}T^{n^2}}{n\log(T)}.$$
Here the factor of $1/2$ comes from the fact that asymptotically half of the matrices in the box $\|X\|\leq T$ satisfy that $\det(X)>0$.

Second, we consider the Pfaffian polynomial $\Pff$ on skew-symmetric $2n\times 2n$ integral matrices. This polynomial is a nondiagonal homogeneous polynomial in $2n^2-n$ variables of degree $n$ and is irreducible over $\overline{\Q}.$ For $n\geq 5$, a prime number theorem for this polynomial is beyond what the circle method can currently handle. Again, by inputting tools from dynamics (in particular Ratner's theorem), we can establish a prime number theorem for the Pfaffian. 

\begin{thm}\label{thm: PNT Pff Skew}
    Let $n\geq 2$. Define the following prime counting function: 
    $$\pi_{\Pff}(T) := \#\{A\in \Mat_{2n}(\Z): \|A\|\leq T, A^T = -A, \Pff(A) \text{ is prime}\},$$
    where $\|A\|$ denotes the $L^\infty$ norm. As $T\rightarrow\infty$, we have that 
    $$\pi_{\Pff}(T) = (1+o(1))\cdot \prod_{\substack{3\leq n\leq 2n-1\\ j\text{ odd}}} \zeta(j)^{-1} \cdot \int_{\substack{\|X\|\leq T\\ X^T = -X}} \frac{1}{\log^+(\Pff(X))}dX.$$
    Here $dX$ is the Euclidean measure on $\Skew_{2n}(\R) = \{X\in \Mat_{2n}(\R): X^T = -X\} \cong \R^{2n^2-n}.$ Moreover, Conjecture \ref{conj: Bateman-Horn} holds for the Pfaffian.
\end{thm}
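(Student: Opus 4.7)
\medskip

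\noindent\textbf{Proof proposal.} The approach follows the three-ingredient template of the abstract: equidistribute integer points on level sets of $\Pff$, count over star-shaped ``cone'' regions, and transfer to the $L^\infty$ box using the sieve upper bound of Proposition~\ref{prop: upper bound}. The algebraic setup is that $G=\SL_{2n}$ acts on $\Skew_{2n}$ via $g\cdot X = gXg^T$, preserves the Pfaffian, and has stabilizer of the standard symplectic form equal to $H=\mathrm{Sp}_{2n}$. Hence for $m\neq 0$ the level set $V_m = \{X:\Pff(X)=m\}$ is a single $G(\R)$-orbit identified with $G(\R)/H(\R)$, and $V_m(\Z)$ decomposes into finitely many $G(\Z)$-orbits.

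Since $\mathrm{Sp}_{2n}$ is semisimple with no $\R$-compact factor, Ratner's theorem on unipotent flows, combined with the Eskin--McMullen and Duke--Rudnick--Sarnak lattice-point machinery, yields for each star-shaped $\calR_0$ and each nonzero $m$ the asymptotic
\begin{equation*}
\#\bigl\{X\in V_m(\Z)\cap T\calR_0\bigr\} = c(m)\cdot \vol_{V_m}\bigl(T\calR_0\cap V_m\bigr)\bigl(1+o(1)\bigr),
\end{equation*}
where $\vol_{V_m}$ is the $G(\R)$-invariant measure on $V_m$ and $c(m)$ is an adelic density. The scaling $V_m\cap T\calR_0 = T\bigl(V_{m/T^n}\cap\calR_0\bigr)$ lets us partition the $\Pff$-axis into dyadic shells $|m|\asymp M\le T^n$, apply the count shell by shell, and sum over primes $p\in[M,2M]$ by the prime number theorem; coarea then reassembles the total into
\begin{equation*}
\sing\cdot\int_{T\calR_0}\frac{dX}{\log^+|\Pff(X)|}.
\end{equation*}
The Euler product $\prod_q c_q$ is matched against the Bateman--Horn singular series directly: using that $\SL_{2n}$ and $\mathrm{Sp}_{2n}$ both have Tamagawa number $1$, together with the identity $\vol(\SL_{2n}(\Z_q))/\vol(\mathrm{Sp}_{2n}(\Z_q))=\prod_{j\text{ odd},\,3\le j\le 2n-1}(1-q^{-j})$, one obtains $\prod_j\zeta(j)^{-1}$ as in the statement, and a direct local count of $\#\{X\in\Skew_{2n}(\F_q):\Pff(X)\equiv 0\}$ confirms agreement with the corresponding Bateman--Horn Euler factor, establishing Conjecture~\ref{conj: Bateman-Horn} for $\Pff$.

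The principal obstacle is making the equidistribution error \emph{uniform in $m$} up to $m\asymp T^n$: Ratner's theorem alone is qualitative, yet the prime sum runs over $\asymp T^n/\log T$ values. An effective version (Mozes--Shah, Einsiedler--Margulis--Venkatesh, or a Linnik-type subconvexity input on the symmetric variety $\mathrm{Sp}_{2n}\backslash\SL_{2n}$) is therefore essential, and on the residual exceptional set of $m$'s we fall back on Proposition~\ref{prop: upper bound} to bound the prime Pfaffian contribution by $O(|T\calR_0|/\log T)$ times a vanishing factor. Passing from cones to the $L^\infty$ box is then a sandwich argument: approximate the box from inside and outside by finite unions of sectors whose symmetric difference has measure $\le\varepsilon T^{2n^2-n}$, apply Proposition~\ref{prop: upper bound} to the thin region, and let $\varepsilon\to 0$ along a diagonal with $T\to\infty$.
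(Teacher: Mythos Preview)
Your overall architecture---Linnik-type equidistribution on cones, then $\epsilon$-approximation of the box by cones with the sieve of Proposition~\ref{prop: upper bound} on the excess---matches the paper exactly. The algebraic setup ($G=\SL_{2n}$, $H=\mathrm{Sp}_{2n}$, single real orbit) is correct, and your Tamagawa-number computation of the Euler product is a valid alternative to the paper's direct route through MacWilliams' count (\ref{eq: pfaffian local count}) of nonsingular skew-symmetric matrices over $\F_q$.

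However, your ``principal obstacle'' is illusory, and the machinery you invoke to address it (effective Ratner, Einsiedler--Margulis--Venkatesh, subconvexity, an exceptional set of $m$'s) is unnecessary. The equidistribution input one actually uses---Eskin--Oh, quoted as Theorem~\ref{Oh's Theorem} and specialized in Theorem~\ref{thm: Oh Pff}---is of the form
\[
N(m,\Omega):=\#\bigl(V_m(\Z)\cap m^{1/n}\Omega\bigr)=(1+o_\Omega(1))\,\mu_\infty(\Omega)\,m^{(\dim V)/n-1}\,\sing_m\qquad\text{as }m\to\infty,
\]
for a \emph{fixed} nice compact $\Omega\subset V_1(\R)$. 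Since the error is $o(1)$ in $m$ alone, one sums over all primes $p\le T^n$ directly: for any $\varepsilon>0$ there is $M_\varepsilon$ beyond which the relative error is below $\varepsilon$, and the finitely many $p\le M_\varepsilon$ contribute $O_{\varepsilon,\Omega}(1)$, negligible against the main term $\asymp T^{\dim V}/\log T$. This is the one-line computation in the proof of Theorem~\ref{thm: PNT Pfaffian cones}. The reason your formulation appears to need uniformity is that you wrote the count over $T\calR_0$ with an unspecified $o(1)$; for a genuine cone $T\Omega$ one has $V_m\cap T\Omega=m^{1/n}\Omega$ identically whenever $m\le T^n$, so $T$ drops out of the level-set count and only the $m\to\infty$ asymptotic matters. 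The cone structure is precisely what eliminates the need for any effective input---it is doing real work, not just geometric convenience.
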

\begin{remark}As with the determinant case, we again handle more general regions, as a consequence of Theorem \ref{thm: PNT Pfaffian cones} and Theorem \ref{thm: cones to boxes}.
\end{remark}

Note that both the determinant and the Pfaffian polynomials belong to a more general dynamical framework; this set up will be discussed further in \S\ref{sec: general framework}.

Our final example is the determinant polynomial $\det$ on symmetric $n\times n$ matrices. This polynomial is a nondiagonal homogeneous polynomial in $\frac{n(n+1)}{2}$ variables of degree $n$ and it is irreducible over $\overline{\Q}$; furthermore, the variety $\{X\in \Sym_n(\overline{\Q}):\det(X) = 0\}$ \textit{is not smooth}. For $n=1$, the Bateman-Horn conjecture is answered by the prime number theorem. When $n=2$, the determinant becomes a ternary quadratic form and thus can be handled by the results of Duke in \cite{Duke}. However, for $n\geq 3$, this polynomial is currently beyond what is known (from the circle method or other techniques). The proof of the following theorem makes use of Linnik type equidistribution as well as the Siegel mass formula.

\begin{thm}\label{thm: PNT det Sym}
    Let $n\geq 3$. Define the following prime counting function: 
    $$\pi_{\Sym}(T):= \#\{A\in \Mat_n(\Z): \|A\|\leq T, A^T=A, \det(A) \text{ is prime}\},$$
    where $\|A\|$ denotes the $L^\infty$-norm. As $T\rightarrow\infty$, we have that $$\pi_{\Sym}(T) = (1+o(1)) \cdot \prod_{\substack{3\leq j\leq n\\ j\text{ odd}}}\zeta(j)^{-1} \cdot \int_{\substack{\|X\|\leq T \\ X^T = X}} \frac{1}{\log^+(\det(X))}dX.$$
    Here $dX$ is the Euclidean measure on $\Sym_n(\R) = \{X\in \Mat_n(\R): X^T = X\} \cong \R^{n(n+1)/2}.$ Moreover, Conjecture \ref{conj: Bateman-Horn} holds for the determinant polynomial on symmetric matrices.
\end{thm}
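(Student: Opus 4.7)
The plan is to follow the framework of Theorems \ref{thm: PNT det Mat} and \ref{thm: PNT Pff Skew}, with the additional input of the Siegel mass formula. The essential new feature compared to those theorems is that each level set $\{X \in \Sym_n(\Z) : \det X = m\}$ decomposes into \emph{several} $\SL_n(\Z)$-orbits under the action $X \mapsto gXg^T$ (one per equivalence class of integral symmetric matrices of determinant $m$), rather than essentially a single orbit; the Siegel mass formula is what packages the summation over these equivalence classes into a clean product of local densities.

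First, using Theorem \ref{thm: cones to boxes} I would reduce the problem of counting primes in the box $\|X\| \leq T$ to counting primes of the form $\det X$ with $X$ in a cone $\calR_0 \subset \Sym_n(\R)$ based at a point $X_0$ with $\det X_0 \neq 0$; the resulting box-to-cone approximation error is controlled by the uniform upper bound sieve of Proposition \ref{prop: upper bound}.

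Within each cone I would stratify by the prime determinant value $p$:
\[
\#\{X \in T\calR_0 \cap \Sym_n(\Z) : \det X = p\} = \sum_{[Y]} \#\{X \in T\calR_0 \cap (\SL_n(\Z) \cdot Y)\},
\]
where $[Y]$ ranges over $\SL_n(\Z)$-equivalence classes of integral symmetric matrices with $\det Y = p$. For each orbit I would apply Linnik-type equidistribution on the level set $\{\det = p\} \subset \Sym_n(\R)$ --- homogeneous-dynamical equidistribution on the appropriate $\SO(r,s)$-space for indefinite $Y$, and equidistribution of genus classes for definite $Y$ --- obtaining an asymptotic for the count as the $\SL_n(\R)$-invariant measure of $T\calR_0 \cap \{\det = p\}$ divided by the covolume of the stabilizer. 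Summing over $[Y]$ is then exactly what the Siegel mass formula computes, giving the total as a product of local densities $\prod_\ell \alpha_\ell(p)$ times an archimedean factor coming from volumes of orthogonal groups.

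Finally, an explicit Euler factor computation --- using the standard count of symmetric $\F_\ell$-matrices of each rank --- should identify the Siegel product with the Bateman-Horn singular series $\sing$ and produce the archimedean-style constant $\prod_{3 \leq j \leq n,\, j \text{ odd}} \zeta(j)^{-1}$. The main obstacle I anticipate is that the hypersurface $\{\det = 0\} \subset \Sym_n$ is \emph{singular}, which complicates the equidistribution near the singular locus and for orbits whose mod-$\ell$ reductions have low rank. A secondary difficulty is that the level set mixes matrices of different signatures, so the ergodic (indefinite) and class-number (definite) flavors of Linnik equidistribution must be applied uniformly and combined. Matching the Siegel product of local densities to the Bateman-Horn singular series is then conceptually straightforward but requires delicate bookkeeping at each prime $\ell$.
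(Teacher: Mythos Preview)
Your outline is correct and matches the paper's approach: reduce to cones via Theorem~\ref{thm: cones to boxes} and Proposition~\ref{prop: upper bound}, split $V_1(\R)$ into orbits by signature, apply Linnik equidistribution on each orbit (Oh's Example~4.3 for indefinite signatures, EMMV for the definite orbit), and package the sum over $\SL_n(\Z)$-classes via the Siegel mass formula to extract the product of local densities.

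However, you misidentify the main obstacle. The singularity of $\{\det=0\}\subset\Sym_n$ is a non-issue: the $\epsilon$-cutting of \S\ref{sec: eps cutting} removes a neighborhood of the zero locus, and all equidistribution statements are applied to nice compact subsets of $V_1(\R)$, which sit entirely in the smooth locus. The actual difficulties are elsewhere. First, for the positive-definite orbit the stabilizer $\SO_n(\R)$ is \emph{compact}, so Oh's framework (Theorem~\ref{Oh's Theorem}) does not apply; you must invoke EMMV equidistribution of genera and then separately compare the normalized Haar measure on $\SL_n(\Z)\backslash\SL_n(\R)/\SO_n(\R)$ to the Bateman-Horn measure $\mu_\infty$ --- this measure comparison (Lemma~\ref{lem: ratio of measures} in the paper) is a genuine step you do not mention. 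Second, the local density computation is not a routine Euler factor check: the $2$-adic density requires the full Conway--Sloane machinery (Type I/II Jordan blocks, love forms, octane invariants), and for even $n$ the densities at odd primes involve $\legendre{(-1)^{n/2}q}{p}$, producing a Dirichlet $L$-value $L(n/2,\chi_q)$ in $\sing_q$ that depends on $q\bmod 8$. Summing $\sum_{q\le T^n} q^{(n-1)/2}\sing_q$ then requires averaging these $L$-values over primes in residue classes (Lemma~\ref{lem: summing the L function}), which is where the real bookkeeping lies.
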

\begin{remark}Again, we can handle more general regions than boxes; see Theorem \ref{thm: cones to boxes} and \S\ref{sec: sym}. 
\end{remark}

Moreover, we prove a more precise result depending on the signature of the symmetric matrices involved; this is discussed further in \S\ref{sec: sym}.  
\begin{remark}
    The question of prime determinants for the determinant polynomial on $\Mat_n(\Z)$ and $\Sym_n(\Z)$ was raised by Kovaleva in her insightful investigation in \cite{Kovaleva}. Kovaleva counts in \cite{Kovaleva} matrices with squarefree, instead of prime, determinants and resolves the variant question of counting $n\times n$ symmetric integer matrices with a squarefree minor of size $n-r$ for $r>0$. 
\end{remark}

\subsection{Outline of the paper}
First, in \S\ref{sec: general framework} we describe the general framework that all of our examples belong to -- they are all almost Hardy-Littlewood systems (Definition \ref{def: Hardy-Littlewood system}). We present a general prime number theorem (\ref{General PNT on cones}) under certain conditions and relate the problem of counting primes to the Linnik problem. Additionally, in \S2 we will discuss the Tamagawa measure, the ``Bateman-Horn measure'', and how these two measures relate to the Euclidean measure. 

In \S3, we prove a prime number theorem for the determinant polynomial on integral matrices and a prime number theorem for the Pfaffian on integral skew-symmetric matrices \textit{for particularly structured regions}; namely, we look at cones. 

In \S\ref{sec: eps cutting}, we start our procedure for turning our prime number theorem on cones into a theorem on boxes (and more general regions). We introduce the notion of an $\epsilon$-cutting, which is an approximation of the desired region by cones. The idea of $\epsilon$-cuttings comes from the principle of Riemannian integration -- one can approximate a general compact region $\calR$ using sequences of cones of smaller and smaller width. 

In \S\ref{sec: sieve}, we introduce the second ingredient of our procedure: an upper bound sieve on the part of the box that is not covered by the approximation by cones. Section \ref{sec: sieve} does not make use of the dynamics framework and it applies to a general homogeneous irreducible polynomial. It yields an upper bound for Conjecture \ref{conj: Bateman-Horn}, which is off by a constant independent of $\calR_0.$ Finally in \S\ref{sec: sieve}, we present the proofs of Theorem \ref{thm: PNT det Mat} and Theorem \ref{thm: PNT Pff Skew}.

In \S\ref{sec: Siegel}, we establish the necessary background on the Siegel mass formula to prove Theorem \ref{thm: PNT det Sym}, which is subsequently proved in \S\ref{sec: sym}. Finally, in \S\ref{sec; remarks}, we comment on variant questions that our methods can handle. 

\section{General framework of invariant polynomials for homogeneous spaces}\label{sec: general framework}
In this section, we aim to put our choices of polynomials in a general framework -- they are the invariant polynomials of homogeneous spaces. 

\subsection{The Linnik Problem} 
The proofs of Theorems \ref{thm: PNT det Mat}, \ref{thm: PNT Pff Skew}, and \ref{thm: PNT det Sym} rely on the resolution of the Linnik problem in the respective set ups. In this subsection, we explain the Linnik problem.

Let $F(\bx)\in \Z[\bx]$ be a general polynomial in $n$ variables, let $$V_m = \{\bx: F(\bx) = m\}$$
be the level sets of $F$, and let $V_m(\Z)$ denote the integer points of $V_m$. The Linnik problem studies the equidistribution of $V_m(\Z)$ when projected onto $V_1(\R)$ as $m\rightarrow\infty$. 
We list some examples. 

Linnik \cite{Linnik} used his \textit{ergodic method} to prove (conditional on $m$ being a quadratic residue for any fixed odd prime) that for $F(\bx) = x_1^2+x_2^2+x_3^2,$ for $\mu$ the normalization of the Euclidean measure on $\mathbf{S}^2$, and for $\Omega\subset \mathbf{S}^2$ a subset with boundary of measure zero, the following holds:
$$\lim_{m\rightarrow\infty} \frac{\#V_m(\Z)\cap m^{\frac{1}{2}}\Omega}{\#V_m(\Z)} = \mu(\Omega).$$ This result was later proved unconditionally by Duke in \cite{Duke} using bounds on the Fourier coefficients of half-weight modular forms proved by Iwaniec in \cite{Iwaniec}. 

In \cite{LinnikSkubenko}, Linnik and Skubenko solve the corresponding  problem for the determinant polynomial on  $\GL_n(\R)$. Notice that unlike the case of the sphere, $V_m(\Z)$ is infinite for every $m\in \mathbb{N}$, which can be understood as a consequence of $\SL_n(\R)$ having infinite measure and hence it is necessary to intersect with some compact set. We refer the reader to Sarnak's 1990 ICM talk \cite{Sarnak} for the connection between this problem and the equidistribution of Hecke orbits. Sarnak shows the following result, which is effective with a power saving: let $\Omega\subset \SL_n(\R)$ be a ``nice'' compact set (as defined in Definition \ref{def: nice}), $F$ be the determinant polynomial, and $\mu$ the Euclidean measure on $\SL_n(\R)$ (induced from $\GL_n(\R)$). Then, as $m\rightarrow\infty:$ 
$$\#V_m(\Z)\cap m^{\frac{1}{n}}\Omega \sim \prod_{j=2}^n \zeta(j)^{-1}\cdot G_n(m)\mu(\Omega),$$ where for $m=\prod_{i=1}^r p_i^{e_i},$ we define $$G_n(m)=\prod_{i=1}^r\frac{(p_i^{e_i+1}-1)(p_i^{e_i+2}-1)\dots(p_i^{e_i+n-1}-1)}{(p_i-1)(p_i^{2}-1)\dots(p_i^{n-1}-1)}.$$ 

A robust approach to these Linnik type problems is to use Ratner's work on measure rigidity \cite{Ratner}. We will make use of the work of Eskin and Oh \cite{EO}.

\subsection{The general framework}
Let $G$ denote some semisimple real algebraic group defined over $\Q$ and let $\rho:G\rightarrow\GL(V)$ be a rational representation of $G$ on a finite vector space $V$. Assume that there is some polynomial $F$ that is invariant under the right action of $\varrho$, i.e. 
$$F(v\varrho(g)) = F(v),  \forall g\in G,$$
and that $F$ is irreducible over $\overline{\Q}$.
We denote the level sets of $F$ by $$V_m := \{ v\in V: F(v) = m\}.$$ 

For some $v_0\in V$, let $H=\text{stab}_{v_0}$ denote the stabilizer of $v_0$ inside $G$. We require that the set $(V,G,H,F)$ satisfies the following strong assumptions:
 
\begin{itemize}
    \item $V_1 = v_0 G$ for some vector $v_0\in V(\R)$, i.e. $V_1$ can be identified with the homogeneous space $$V_1\cong G/H,$$
    \item for each $m$, there is a $\lambda_m\in \R$ such that $V_m =\lambda_m V_1$,
    \item $V_1(\Q)\neq \emptyset$, i.e. $V_1$ contains a rational point,
    \item $H(\R)\subset G(\R)$ is a connected, simply connected, and semisimple subgroup. We also assume that it is a proper maximal subgroup of $G(\R)$ and \textit{contains no compact factors},
    \item if $m_0\in \Z$ satisfies that $\#\{v\in V(\Z): F(v) = m_0\}\neq 0$, then we must have $$\#\{m\in\Z: \lambda_m^{-1} V_m(\Z) = \lambda_{m_0}^{-1} V_{m_0}(\Z)\} <\infty.$$
\end{itemize} 
When $V_1\cong G/H$ is an affine symmetric space, these varieties satisfy the Hasse principle, as was shown by Borovoi in \cite{Borovoi}; in more generality we are assuming the existence of an initial rational point. 
The fact that the stabilizer $H(\R)$ is connected and semisimple ensures the existence and uniqueness of the Tamagawa measure on the level sets of $F$. The final two conditions are what is necessary in order to apply Theorem 1.9 of Oh \cite{Oh}.
\begin{remark}
    We note that the case $V_1 = \bigcup_i  v_i G$ is a finite union of orbits, where the stabilizers of $v_i$ satisfy all of the above conditions, is essentially the same and we discuss the single orbit case here only to make the exposition simpler. 
\end{remark}

\begin{defn}\label{def: nice}
We will call a compact set $\Omega\subset V_1(\R)$ \textbf{nice} if its boundary has measure zero on $V_1(\R).$
For a nice compact subset $\Omega\subset V_1(\R)$, we define the \textbf{cone of} \bm{$\Omega$} \textbf{of height} \bm{$T$} as the following region: $$T\Omega := \{v= t\omega: t\in [0,T], \omega\in \Omega\}.$$ 
\end{defn}

Under the assumptions listed above, Eskin and Oh prove in \cite{EO} that for any $\Omega$ nice compact subset of $V_1(\R)$ we have $$\#V_m(\Z)\cap m^{\frac{1}{n}}\Omega\sim \omega_m \mu(\Omega),\,m\rightarrow\infty,$$ where $\mu$ is some $G-$invariant Borel measure on $V_1$ and $\omega_m$ is defined as follows: for any arithmetic lattice $\Gamma$ in $G(\Q)$ with $V_m(\Z)\Gamma\subset V_m(\Z)$ for all $m\in\Z$, we have $$\omega_m(\Gamma)=\frac{\sum\limits_{\xi\Gamma \subset V_m(\Z)}\mu_H((H\cap g_{\xi}^{-1}\Gamma g_{\xi})\backslash H)}{\mu_G(\Gamma\backslash G)},$$ where 
\begin{itemize}
    \item the sum is over all disjoint orbits $\xi\Gamma$ inside $V_m(\Z)$; this sum is finite as a consequence of the theorem of Borel and Harish-Chandra \cite{BH},
    \item $g_{\xi}$ is a point of $G$ such that $v_0=(m^{-\frac{1}{n}}\xi )g_{\xi}$,
    \item the Borel measure $\mu$ and the Haar measures $\mu_G,\mu_H$ are compatible in the sense of Weil \cite{We}. 
    
    \end{itemize}
    
In \cite{Oh}, Oh proves that $\omega_m$ does not depend on the choice of $\Gamma$ and thus the notation is consistent. An immediate corollary is that for nice compact sets $\Omega_1,\Omega_2\subset V_1$: $$\frac{\#V_m(\Z)\cap m^{\frac{1}{n}}\Omega_1}{\#V_m(\Z)\cap m^{\frac{1}{n}}\Omega_2}\sim\frac{\mu(\Omega_1)}{\mu({\Omega_2})}.$$ Finally, Oh computes the constant $\omega_m$ in terms of local masses and derives the following, which will be crucial for our analysis:
\begin{thm}[Oh, \cite{Oh}] \label{Oh's Theorem}
Let $V$ be a linear space with a given $\Z$-structure satisfying the above conditions. Then for any nice compact set $\Omega \subset V_1(\R)$, as $m\rightarrow\infty$, 
$$\#V_m(\Z) \cap \calR(m^{1/\deg(F)},\Omega) =\mu_\infty(\Omega)\cdot m^{\dim(V)/\deg(F)-1} \cdot \sing_m \cdot (1+o_{\Omega}(1)),$$
    where the singular series is given by $$\sing_m = \prod_{q} \lim_{k\rightarrow\infty} \frac{V_m(\Z/q^k\Z)}{q^{k\dim(V_m)}},$$
    and the dependence on the compact set is given by $$\mu_\infty(\Omega) = \lim_{\epsilon\rightarrow 0}\frac{\vol(\{v\in [1-\epsilon,1+\epsilon]\Omega\})}{2\epsilon},$$ as computed by Borovoi and Rudnick in \cite{BR}. 
\end{thm}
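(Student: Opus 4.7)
The plan is to combine three ingredients: the Eskin--Oh equidistribution theorem for $V_m(\Z)$ that is recalled just above the theorem statement, a Tamagawa-style decomposition of the density $\omega_m$ into an Euler product of local factors, and the Borovoi--Rudnick interpretation of the archimedean mass $\mu_\infty(\Omega)$ as a limit of Euclidean volumes.

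First, I would reduce the count over the cone $\calR(m^{1/\deg(F)},\Omega)$ to a count over the single level set $V_m$. Since $F$ is homogeneous of degree $\deg(F)$, a point $v = t\omega$ with $\omega \in \Omega \subset V_1(\R)$ satisfies $F(v) = t^{\deg(F)}$, so
$$V_m(\R) \cap \calR(m^{1/\deg(F)},\Omega) = m^{1/\deg(F)}\,\Omega.$$
The Eskin--Oh theorem, applied to the nice compact set $\Omega$, then produces the asymptotic $\#V_m(\Z) \cap m^{1/\deg(F)}\Omega \sim \omega_m \mu(\Omega)$, which reduces the task to identifying $\omega_m$ and $\mu(\Omega)$ explicitly.

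Second, I would rewrite $\omega_m$ as $m^{\dim(V)/\deg(F)-1}\cdot \sing_m$. Starting from
$$\omega_m(\Gamma)=\frac{\sum_{\xi\Gamma \subset V_m(\Z)}\mu_H\bigl((H\cap g_\xi^{-1}\Gamma g_\xi)\backslash H\bigr)}{\mu_G(\Gamma\backslash G)},$$
I would repackage the numerator as the Tamagawa volume of $H(\mathbb{A})/H(\Q)$: since $H$ is semisimple and simply connected, its Tamagawa number equals $1$, and by Weil's compatibility of local and global measures the ratio factors as a product over places of ratios $\mu_{H,v}(K_v^H)/\mu_{G,v}(K_v^G)$ weighted by orbit counts. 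At a finite prime $q$ this local factor is identified with $\lim_{k\to\infty}\#V_m(\Z/q^k\Z)/q^{k\dim V_m}$, which is precisely the $q$-th factor in $\sing_m$. The $m$-dependence is extracted from the homothety $v\mapsto m^{1/\deg(F)}v$ sending $V_1$ to $V_m$: it scales the invariant measure on the level set by a factor of $m^{(\dim(V)-1)/\deg(F)}$, and combined with the $m^{-1}$ Jacobian from normalizing the transverse direction $dF$ this yields the clean exponent $\dim(V)/\deg(F)-1$.

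Third, I would invoke the Borovoi--Rudnick formula to identify $\mu(\Omega)$ with $\mu_\infty(\Omega)$. The coarea decomposition of the Euclidean measure along the fibration $F$ gives
$$\vol\bigl([1-\epsilon,1+\epsilon]\,\Omega\bigr)=\int_{1-\epsilon}^{1+\epsilon}\mu_s\bigl(s^{1/\deg(F)}\Omega\bigr)\,ds,$$
so dividing by $2\epsilon$ and letting $\epsilon\to 0$ recovers, in the limit, the invariant measure on $V_1$ with exactly the normalization compatible with Eskin--Oh. Stitching the three steps produces the stated asymptotic. The main obstacle is the middle step: turning the abstract ratio $\omega_m$ into a convergent Euler product requires care with Tamagawa convergence factors, the verification that class-number corrections vanish under the simply-connected hypothesis on $H$, and ensuring via Borel--Harish-Chandra finiteness that the sum over orbits $\xi\Gamma$ genuinely covers $V_m(\Z)$. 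Once the local/global decomposition is in place, the remaining comparisons are routine consequences of $G$-invariance and homogeneity of $F$.
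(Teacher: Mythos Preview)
The paper does not prove this theorem: it is quoted from Oh \cite{Oh} as an external input, with the preceding paragraph explaining that Eskin--Oh give the asymptotic $\#V_m(\Z)\cap m^{1/n}\Omega\sim\omega_m\mu(\Omega)$ and that Oh then ``computes the constant $\omega_m$ in terms of local masses and derives'' the stated form. There is no proof environment in the paper to compare your proposal against.

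That said, your sketch is a reasonable outline of what the cited argument in \cite{Oh} actually does, and the three ingredients you isolate (Eskin--Oh equidistribution, the Tamagawa/local-mass evaluation of $\omega_m$, and the Borovoi--Rudnick identification of $\mu_\infty$) are exactly the pieces the paper attributes to \cite{EO}, \cite{Oh}, and \cite{BR} respectively. Your honest flag that the middle step --- turning $\omega_m$ into a convergent Euler product with the correct power of $m$ --- is the substantive part is accurate; that is the content of Oh's paper, and filling it in rigorously requires the full Tamagawa-measure machinery rather than the one-paragraph gloss you give. One minor correction: your first reduction step is slightly off, since $\calR(m^{1/\deg(F)},\Omega)$ is the full cone $\{t\omega:t\in[0,m^{1/\deg(F)}],\omega\in\Omega\}$, not just the slice $m^{1/\deg(F)}\Omega$, but the integer points of $V_m$ in the cone do lie on that slice, so the conclusion is unchanged.
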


This result of Oh implies that under the conditions above on $(V,G,H,F)$, we can conclude that $(V,F)$ satisfies the predictions made by Hardy and Littlewood for rational points on varieties; in particular, we will call these systems Hardy-Littlewood systems. 
\begin{defn}\label{def: Hardy-Littlewood system}
   A linear space $V$ with polynomial $F$ is a \textbf{Hardy-Littlewood system} if it satisfies that as $m\rightarrow \infty$, $$N(m,\Omega) = \mu_\infty(\Omega)\cdot m^{\dim(V)/\deg(F)-1} \cdot \sing_m \cdot (1+o_{\Omega}(1)),$$
    where the singular series is given by $$\sing_m = \prod_{q} \lim_{k\rightarrow\infty} \frac{\#V_m(\Z/q^k\Z)}{q^{k\dim(V_m)}},$$
    and the dependence on the compact set is given by $$\mu_\infty(\Omega) = \lim_{\epsilon\rightarrow 0}\frac{\vol(\{v\in [1-\epsilon,1+\epsilon]\Omega\})}{2\epsilon}.$$ 
    Here the volume denoted is the Euclidean volume.
\end{defn}

The above asymptotic result on the count of integral points on $V_m$ will allow us to derive a prime number theorem on cones by summing over the level sets with prime values. In particular, if $(V,F)$ is a Hardy-Littlewood system and the singular series $\sing_m$ satisfies that for all sufficiently large primes $p$, $$\sing_p = \sing_{V,F} (1+O(p^{-\delta})),$$
where $\sing_{V,F}$ is the singular series from Conjecture \ref{conj: DS Bateman-Horn} and $\delta>0$ is a positive constant, then the following prime number theorem holds for any nice compact set $\Omega\subset V_1(\R)$: 
\begin{align}\#\{v\in V(\Z)\cap T\Omega: F(v)\text{ is prime}\}\sim& \sing_{V,F} \cdot \int_{T\Omega} \frac{1}{\log^+(F(v))}dv. \label{General PNT on cones} \end{align}
The proof of this fact follows exactly the proof presented in \S\ref{sec: PNT cones} for the determinant polynomial and the Pfaffian.

\subsection{Relating the measures}

So far, we have discussed the measure $\mu_{\infty}$ on $V_1$, which appears in Theorem \ref{Oh's Theorem}, and the Euclidean measure, which appears as the archimedean part of the Bateman-Horn conjecture. In this subsection, we wish to provide a relation between these two measures. In order to establish this connection, we utilize the co-area formula.

\begin{lemma}\label{lem: the real part for cones PNT}
    For $\Omega$ a nice compact connected subset of $V_1(\R)$, we have that as $T\rightarrow\infty$, $$\int_{T\Omega} \frac{1}{\log^+(F(x))}dx = \mu_\infty(\Omega) \cdot \frac{T^{\dim(V)}}{\dim(V)\log(T)} (1+o(1)).$$ 
\end{lemma}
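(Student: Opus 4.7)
The plan is to apply the co-area formula to fibre the integral over the level sets $V_m=\{F=m\}$, exploit the homogeneity of $F$ to collapse everything to the single slice $V_1$, and then evaluate a standard one-dimensional logarithmic integral. Write $n=\dim V$ and $d=\deg F$. Niceness of $\Omega$ ensures that $V_1$ is smooth on a neighborhood of $\Omega$ and that $|\nabla F|$ is bounded and bounded away from zero there, so all integrals below are well-defined.

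The key computation goes as follows. Since $V_m\cap T\Omega = m^{1/d}\Omega$ for $m\in(0,T^d]$, the co-area formula gives
\[
\int_{T\Omega}\frac{dx}{\log^+(F(x))} \;=\; \int_0^{T^d}\frac{1}{\log^+(m)}\int_{m^{1/d}\Omega}\frac{d\sigma_m(y)}{|\nabla F(y)|}\,dm,
\]
where $d\sigma_m$ is the $(n-1)$-dimensional surface measure on $V_m$. Using that $|\nabla F(m^{1/d}\omega)| = m^{(d-1)/d}|\nabla F(\omega)|$ by homogeneity, together with the fact that the scaling $\omega\mapsto m^{1/d}\omega$ multiplies $(n-1)$-dimensional surface measure by $m^{(n-1)/d}$, the inner integral reduces to $m^{(n-d)/d}\int_\Omega d\sigma_1/|\nabla F|$. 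The intrinsic surface integral here equals $\mu_\infty(\Omega)$: applying the same co-area reasoning to the thin shell $\{v\in\mathrm{cone}(\Omega):F(v)\in[1-\epsilon,1+\epsilon]\}$ shows its Euclidean volume is asymptotic to $2\epsilon\cdot\int_\Omega d\sigma_1/|\nabla F|$, which by the thin-shell formula matches $2\epsilon\,\mu_\infty(\Omega)$; this is essentially the Borovoi--Rudnick identification of the shell-derivative measure with the Leray residue measure.

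Combining these steps yields
\[
\int_{T\Omega}\frac{dx}{\log^+(F(x))} \;=\; \mu_\infty(\Omega)\int_0^{T^d}\frac{m^{(n-d)/d}}{\log^+(m)}\,dm,
\]
and the substitution $m=u^d$, together with $\log^+(u^d)=d\log u$ for $u\geq e^{2/d}$, converts the right-hand side to $\mu_\infty(\Omega)\int_0^T u^{n-1}/\log u\,du + O(1)$. The standard asymptotic $\int_2^T u^{n-1}/\log u\,du \sim T^n/(n\log T)$ then gives the stated result. The only mildly delicate step is the identification of $\mu_\infty(\Omega)$ with $\int_\Omega d\sigma_1/|\nabla F|$, where care must be taken with the normalization conventions for the shell; apart from this bookkeeping, the proof is a direct application of the co-area formula and an elementary calculus asymptotic.
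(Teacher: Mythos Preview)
Your proof is correct and follows essentially the same route as the paper: both apply the co-area formula to fibre the integral over the level sets $\{F=m\}$, use homogeneity of $F$ to reduce the inner integral to the fixed slice $\Omega\subset V_1$, identify $\int_\Omega d\sigma_1/|\nabla F|$ with $\mu_\infty(\Omega)$ via the Borovoi--Rudnick thin-shell formula, and then evaluate the remaining one-dimensional logarithmic integral. The only cosmetic difference is ordering---the paper first establishes $\mu_\infty(\Omega)=\int_\Omega|\nabla F(y)|^{-1}\,dy$ and then does the main calculation, whereas you do the main calculation first and identify the constant at the end.
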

\begin{proof}
From the computations of Borovoi and Rudnick in \cite{BR}, we know that $$\mu_\infty(\Omega) = \lim_{\epsilon\rightarrow 0} \frac{1}{2\epsilon}\int_{[1-\epsilon,1+\epsilon]\Omega} dx,$$
where $dx$ is the Euclidean measure on $V(\R).$ The co-area formula then allows us to rewrite this formula in terms of an integral over $\Omega$: \begin{align*}
    \int_{[1-\epsilon,1+\epsilon]\Omega} dx 
    &= \int_{1-\epsilon}^{1+\epsilon} \int_{r^{1/\deg(F)} \Omega}|\nabla F(y)|^{-1} dy dr\\ 
    &= \int_{1-\epsilon}^{1+\epsilon} r^{\frac{\dim(V)}{\deg(F)}-1}\int_{\Omega} |\nabla F(y)|^{-1} dy dr \\
    &=\left(\int_{\Omega} |\nabla F(y)|^{-1} dy\right) \cdot ((1+\epsilon)^{\dim(V)/\deg(F)}-(1-\epsilon)^{\dim(V)/\deg(F)})\cdot \frac{\deg(F)}{\dim(V)}.
\end{align*}
Hence, taking the limit as $\epsilon\rightarrow 0$, we have that $$\mu_\infty(\Omega) = \int_{\Omega} |\nabla F(y)|^{-1} dy. $$

On the other hand, we consider the archimedean part of the Bateman-Horn conjecture. Applying the co-area formula again, we have that as $T\rightarrow\infty$:
\begin{align*}
    \int_{T\Omega}\frac{1}{\log^+(F(x))} dx
    &= \int_2^{T^{\deg(F)}} \frac{1}{\log(r)} \int_{r^{1/\deg(F)}\Omega} |\nabla F(y)|^{-1} dy dr \\
    &= \int_{2}^{T^{\deg(F)}} \frac{r^{\frac{\dim(V)}{\deg(F)}-1}}{\log(r)} \int_{\Omega} |\nabla F(y)|^{-1} dy dr \\
    &= \left(\int_{\Omega} |\nabla F(y)|^{-1} dy\right) \cdot \frac{T^{\dim(V)}}{\dim(V)\log(T)}(1+o(1)).
\end{align*}
\end{proof}
\begin{remark}
We recall that in Oh \cite{Oh}, the measure $\mu_\infty$ is a gauge form, a nowhere zero regular differential form of maximal degree, which is used to construct the unique Tamagawa measure on $V_1(\mathbb{A})$. 
Hence, the co-area formula gives us a precise expression for this Gauge form $d\mu_{\infty}$ on $V_1:$ $$d\mu_{\infty} = |\nabla F(y)|^{-1} dy.$$ From now on, we will be calling $\mu_{\infty}$ \textit{the Bateman Horn measure}.
\end{remark}

\section{A prime number theorem on cones}

In this section, we prove a prime number theorem on particularly structured regions for the determinant polynomial on $\Mat_n(\R)$; in particular we look at \textit{cones}. The reason for that is that we can solve the Linnik problem for this space (as mentioned in \S2.1). We will explain in the end of the section that the case of the Pfaffian follows the same lines. \\

For a nice compact subset $\Omega\subset V_1(\R)=\SL_n(\R)$, we define $$\pi_{\Mat_n,\det}(T,\Omega)=\#\{A\in T\Omega\cap \Mat_n(\Z): \text{ $\det A$ is prime}\}.$$ 

\begin{thm}\label{PNT cones Hardy-Littlewood}
Let $\Omega$ be a nice compact subset of $\SL_n(\R).$ Then as $T\rightarrow\infty$, the Bateman-Horn conjecture holds on $T\Omega$: 
    $$\pi_{\Mat_n,\det}(T,\Omega)\sim \sing_{\Mat_n,\det} \cdot \int_{T\Omega} \frac{1}{\log^+(\det(\bx))}d\bx,$$
    where $$\sing_{\Mat_n,\det} = \prod_{p} \left(\frac{1-p^{-n^2}\#\{\bx \in \Mat_n(\F_p): \det(\bx)\equiv 0 \bmod p\}}{(1-1/p)}\right).$$
\end{thm}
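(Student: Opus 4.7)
The plan is to decompose the prime counting function level set by level set, apply the Linnik-type equidistribution on each $V_p$, and then sum over primes. The key observation is that if $A \in T\Omega \cap \Mat_n(\Z)$ has $\det A = m > 0$, then writing $A = t\omega$ with $\omega \in \Omega \subset \SL_n(\R)$ and $t \in [0,T]$ forces $t = m^{1/n}$. Consequently $A \in V_m(\Z) \cap m^{1/n}\Omega$, and the constraint $t \leq T$ is equivalent to $m \leq T^n$. This gives the clean identity
$$\pi_{\Mat_n,\det}(T,\Omega) = \sum_{\substack{p \text{ prime} \\ p \leq T^n}} \#\bigl(V_p(\Z) \cap p^{1/n}\Omega\bigr).$$

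Next, I would invoke Oh's Theorem \ref{Oh's Theorem} (or equivalently the Sarnak/Eskin-Oh resolution of the Linnik problem for $\det$), which gives, for each $m$,
$$\#\bigl(V_m(\Z) \cap m^{1/n}\Omega\bigr) = \mu_\infty(\Omega)\cdot m^{n-1} \cdot \sing_m \cdot (1 + o_{\Omega}(1)),$$
using $\dim V / \deg F - 1 = n^2/n - 1 = n-1$. The substantive step is to show that for primes $p$ the singular series $\sing_p = \prod_q \lim_{k\to\infty} \#V_p(\Z/q^k\Z)/q^{k(n^2-1)}$ converges uniformly to $\sing_{\Mat_n,\det}$, with an explicit error of the shape $\sing_p = \sing_{\Mat_n,\det}\,(1 + O(p^{-\delta}))$ for some $\delta > 0$. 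The contributions from primes $q \neq p$ are essentially independent of the value $p$ and reassemble into the Euler product defining $\sing_{\Mat_n,\det}$; the local factor at $p$ itself must be computed separately via $\#V_p(\Z/p^k\Z)$, and one can match it with the expected Euler factor at $p$ in $\sing_{\Mat_n,\det}$ up to the stated power saving. Consistency with Sarnak's explicit formula $\#V_p(\Z)\cap p^{1/n}\Omega \sim \prod_{j=2}^n \zeta(j)^{-1}\, G_n(p)\,\mu(\Omega)$, where $G_n(p) = (p^n-1)/(p-1) \sim p^{n-1}$, provides a useful sanity check.

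Combining these ingredients with partial summation and the Prime Number Theorem yields
$$\sum_{p \leq T^n} \sing_p \cdot p^{n-1} \sim \sing_{\Mat_n,\det}\cdot \frac{T^{n^2}}{n^2 \log T},$$
so multiplying by $\mu_\infty(\Omega)$ and comparing with Lemma \ref{lem: the real part for cones PNT} (which evaluates the archimedean integral as $\mu_\infty(\Omega)\cdot T^{n^2}/(n^2\log T)(1+o(1))$) delivers the stated asymptotic.

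The main technical obstacle is uniformity of the $o_\Omega(1)$ error in Oh's Theorem as $p$ ranges over primes up to $T^n$: a purely pointwise asymptotic is not enough, because a naive summation incurs an unsummable error. The plan is to split the sum at a cutoff $T^{n-\eta}$, handling primes beyond the cutoff via the effective power-saving form of the Linnik/Hecke equidistribution (following Sarnak \cite{Sarnak}), and bounding the contribution of primes below the cutoff by a trivial upper bound for $\#V_p(\Z)\cap p^{1/n}\Omega$ combined with PNT, so that this tail contributes $o(T^{n^2}/\log T)$. Verifying the uniform power-saving rate, together with the uniform convergence of $\sing_p$ discussed above, is the technical heart of the argument.
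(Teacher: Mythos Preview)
Your approach is essentially the same as the paper's: decompose by level sets, apply Oh's asymptotic on each $V_p$, use the stability $\sing_p = \sing_{\Mat_n,\det}(1+O(p^{-1/2}))$ (the paper's Lemma~\ref{lem: stability of constants}), sum over primes, and compare with Lemma~\ref{lem: the real part for cones PNT}. The paper's proof is exactly these four lines.

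One remark on your uniformity discussion: you are right to flag the issue of summing a pointwise $o_\Omega(1)$, but your proposed resolution via an effective power-saving rate is unnecessary. The ineffective asymptotic already suffices, because the sum $\sum_{p\leq T^n} p^{n-1}\sing_p$ is dominated by large primes. Concretely, given $\epsilon>0$, fix $M=M_\epsilon$ so that Oh's error is below $\epsilon$ for all $m>M$; the finitely many primes $p\leq M$ contribute $O_{M}(1)$ to the sum, which is $o(T^{n^2}/\log T)$, while for $p>M$ the $(1+O(\epsilon))$ factors uniformly. Since $\epsilon$ was arbitrary, this gives the asymptotic with no recourse to Sarnak's effective bounds. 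The paper's proof implicitly relies on this standard maneuver when it pulls $(1+o_\Omega(1))$ outside the sum.
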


Next, we recall the following corollary of Theorem \ref{Oh's Theorem}, as proved by Oh in Example 4.1 of \cite{Oh}. In particular, the following shows that $(\Mat_n, \det)$ is a Hardy-Littlewood system and gives us an asymptotic count for the integral points on the level sets.

\begin{thm}[Oh \cite{Oh}, Example 4.1]\label{thm: Oh det}
For any nice compact subset $\Omega \subset V_1(\R) = \SL_n(\R)$, as $m\rightarrow\infty$: 
$$\#V_m(\Z) \cap m^{1/n}\Omega = (1+o_\Omega(1)) \cdot \mu_\infty(\Omega) \cdot m^{n-1} \cdot \sing_m,$$
where $$\sing_m = \prod_{q \text{ prime}} \lim_{k\rightarrow\infty} \frac{\#\{X\in \Mat_n(\Z/q^k\Z): \det(X) = m\}}{q^{k(n^2-1)}}$$
and $\mu_\infty$ is the Bateman-Horn measure. 
\end{thm}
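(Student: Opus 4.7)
The plan is to identify $(\Mat_n, \det)$ as a special case of the general framework of Section~\ref{sec: general framework} and then invoke Theorem~\ref{Oh's Theorem} directly. Concretely, let $G = \SL_n \times \SL_n$ act on $V = \Mat_n$ by $\rho(g_1,g_2)(X) = g_1^{-1} X g_2$; then $\det$ is $\rho$-invariant, and picking $v_0 = I$ gives $v_0 G = \SL_n = V_1$ (since $X = \rho(I,X)(I)$ for every $X \in \SL_n$). The stabilizer is $H = \{(g,g) : g \in \SL_n\}$, the diagonal embedding of $\SL_n$. Scalar multiplication by $m^{1/n}$ sends $V_1$ to $V_m$, giving $\lambda_m = m^{1/n}$, and $I \in V_1(\Q)$ supplies the required rational point.

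Next I would verify the structural conditions on $H$: the diagonal $\Delta\SL_n \subset \SL_n \times \SL_n$ is connected, semisimple, simply connected as an algebraic group, and noncompact for $n \geq 2$; maximality (as a connected proper subgroup) is the classical fact that the diagonal in $G \times G$ is a maximal subgroup when $G$ is absolutely almost simple. The finiteness condition $\#\{m : \lambda_m^{-1} V_m(\Z) = \lambda_{m_0}^{-1} V_{m_0}(\Z)\} < \infty$ is immediate, since the equality of scaled integer sets $(m/m_0)^{1/n} V_{m_0}(\Z) = V_{m_0}(\Z)$ constrains $m$ to finitely many values.

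With the framework verified, Theorem \ref{Oh's Theorem} applies. The exponent $\dim(V)/\deg(F) - 1 = n^2/n - 1 = n - 1$ matches the claim, yielding
$$\#V_m(\Z) \cap m^{1/n}\Omega = (1+o_\Omega(1)) \cdot \mu_\infty(\Omega) \cdot m^{n-1} \cdot \sing_m,$$
with $\sing_m$ the stated Euler product of local densities over primes $q$.

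The main obstacle is the structural verification on $H$, principally its maximality inside $\SL_n \times \SL_n$, which is classical but must be referenced with care. An alternative, more explicit route would be to compute $\omega_m(\Gamma)$ for $\Gamma = \SL_n(\Z) \times \SL_n(\Z)$ directly: the Smith normal form decomposes $V_m(\Z)$ into finitely many $\Gamma$-orbits indexed by divisor chains $d_1 \mid d_2 \mid \cdots \mid d_n$ with $\prod d_i = m$, and summing the local volumes $\mu_H((H \cap g_\xi^{-1}\Gamma g_\xi)\backslash H)$ across these orbits recovers the Linnik-Skubenko-Sarnak formula $\prod_{j=2}^n \zeta(j)^{-1} G_n(m) \cdot \mu(\Omega)$ referenced in \S2.1, which one then identifies prime-by-prime with the Euler product $\sing_m$ to obtain the form stated in the theorem.
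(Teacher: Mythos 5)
Your proposal is correct and takes essentially the same route as the paper: the paper proves Theorem~\ref{thm: Oh det} simply by citing Oh's Example~4.1, and your setup (two-sided $\SL_n \times \SL_n$ action on $\Mat_n$, base point $I$, stabilizer $\Delta\SL_n$, followed by verifying the structural hypotheses of Theorem~\ref{Oh's Theorem}) is exactly the content of that example, so you are unpacking the citation rather than giving a different argument. One small slip worth fixing: in the finiteness check, the displayed equality should read $(m/m_0)^{1/n} V_{m_0}(\Z) = V_{m}(\Z)$ (not $V_{m_0}(\Z)$ on both sides); integrality applied to a matrix such as $\mathrm{diag}(m_0,1,\ldots,1)\in V_{m_0}(\Z)$ then forces $(m/m_0)^{1/n} \in \Z_{\geq 1}$, and the symmetric inclusion gives $m = m_0$.
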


We note that the singular series in Theorem \ref{PNT cones Hardy-Littlewood} differs in definition from the singular series in Theorem \ref{thm: Oh det}. In the next section, we show that these products are not so different. 

\subsection{Stability of the singular series}\label{sec: stab sing series}
We remark that our desired singular series from the Bateman-Horn conjecture (Conjecture \ref{conj: Bateman-Horn}) satisfies:
$$\sing_{\Mat_n,\det} = \prod_{p} \frac{p^{-n^2+1}\#\{v\in \Mat_n(\F_p):\det(v)\in \F_p^\times\}}{\varphi(p)}.$$ On the other hand, we also have the following Hardy-Littlewood singular series for any $m\in\Z$: $$\sing_m = \prod_{q\text{ prime}} \lim_{k\rightarrow\infty} \frac{\#\{v\in \Mat_n(\Z/q^k\Z): \det(v)=m\}}{q^{k(n^2-1)}}.$$ We will now show that these two quantities are similar when we specify $m$ to be a prime $p$. 
\begin{lemma}\label{lem: stability of constants}
For any sufficiently large prime $p$, the singular series $\sing_{p}$ coming from the Hardy-Littlewood property of $(\Mat_n,\det)$ satisfies $$\sing_p = \sing_{\Mat_n,\det}\cdot (1+O(p^{-\frac{1}{2}})).$$
\end{lemma}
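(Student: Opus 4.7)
The plan is to compare the two Euler products factor-by-factor. Let $\alpha_q$ denote the local factor of $\sing_p$ at the prime $q$ (which depends on $p$), and $\beta_q$ the local factor of $\sing_{\Mat_n,\det}$ at $q$. I will show that $\alpha_q = \beta_q$ for every $q \neq p$, and then compute $\alpha_p$ directly, obtaining $\alpha_p/\beta_p = 1 + O(p^{-1})$.

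For $q \neq p$, the integer $p$ is a unit modulo $q^k$, so through the surjection $\det : \GL_n(\Z/q^k\Z) \twoheadrightarrow (\Z/q^k\Z)^\times$ every unit value of the determinant has the same number of preimages, namely $|\SL_n(\Z/q^k\Z)|$. Using smoothness of $\GL_n$ to write $|\GL_n(\Z/q^k\Z)| = q^{(k-1)n^2}|\GL_n(\F_q)|$ and passing to the limit yields $\alpha_q = |\GL_n(\F_q)|/(q^{n^2-1}(q-1))$. A direct simplification of the definition of $\beta_q$, using $1 - q^{-n^2}\#\{v : \det v \equiv 0 \bmod q\} = q^{-n^2}|\GL_n(\F_q)|$, produces the same expression.

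The main work is computing $\alpha_p$. I would stratify $v \in \Mat_n(\Z/p^k\Z)$ by the rank $r$ of its reduction $\bar v \in \Mat_n(\F_p)$. The stratum $r = n$ contributes nothing since then $\det v$ is a unit, and the stratum $r \leq n-2$ contributes nothing for $k \geq 2$: by Smith normal form over $\Z_p$, any lift $V \in \Mat_n(\Z_p)$ of such a $\bar v$ satisfies $v_p(\det V) \geq 2$, while $p \not\equiv 0 \bmod p^2$. Only $r = n-1$ matters. For such $\bar v$, the adjugate $\textrm{adj}(\bar v)$ has rank $1$ and in particular is nonzero mod $p$. Writing an arbitrary lift as $V + pW$ with $W \in \Mat_n(\Z/p^{k-1}\Z)$ for a fixed lift $V$, the determinant expansion
$$\det(V + pW) = \det V + p\,\textrm{tr}(\textrm{adj}(V)^T W) + O(p^2)$$
reduces the equation $\det(V + pW) \equiv p \pmod{p^k}$, after dividing through by $p$, to a polynomial equation in $W$ whose mod-$p$ linearization is a nonzero (hence surjective) linear form on $\F_p^{n^2}$. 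A standard Hensel-lifting argument then gives exactly $p^{(k-1)(n^2-1)}$ solutions in $W$ for each rank-$(n-1)$ residue $\bar v$, so
$$\alpha_p = \frac{R_{n-1}}{p^{n^2-1}}, \qquad R_{n-1} = \left(\frac{p^n-1}{p-1}\right)^2 |\GL_{n-1}(\F_p)|,$$
with $R_{n-1}$ the standard count of rank-$(n-1)$ matrices in $\Mat_n(\F_p)$.

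Finally, using $|\GL_n(\F_p)| = p^{n-1}(p^n-1)|\GL_{n-1}(\F_p)|$, a direct computation yields
$$\frac{\alpha_p}{\beta_p} = \frac{1 - p^{-n}}{1 - p^{-1}} = 1 + p^{-1} + p^{-2} + \cdots + p^{-(n-1)} = 1 + O(p^{-1}),$$
which is even stronger than the claimed $1 + O(p^{-1/2})$. The main (mild) obstacle is the Hensel-lifting count in the rank-$(n-1)$ stratum, where the higher-order terms in the expansion of $\det(V + pW)$ must be handled with care; the rest is bookkeeping with standard orders of finite general linear groups.
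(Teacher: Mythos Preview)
Your argument is correct and in fact sharper than the paper's: you obtain $\alpha_p/\beta_p = (1-p^{-n})/(1-p^{-1}) = 1+O(p^{-1})$, whereas the paper only proves $1+O(p^{-1/2})$. The treatment of the factors at $q\neq p$ is essentially the same in both proofs (a bijection on fibres of $\det$ over units), but the analyses at $q=p$ diverge.

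The paper does not compute $\alpha_p$ directly. Instead it first uses the local equidistribution to reduce the limit in $k$ to the single case $k=2$, then evaluates $\#\{v\in\Mat_n(\Z/p^2\Z):\det v\equiv p\}$ by inclusion--exclusion: subtract from $p^{2n^2}$ the matrices with unit determinant and the matrices with $\det\equiv 0\bmod p^2$, and divide by $\varphi(p)$. The unit count is handled via the Lang--Weil bound $\#\{\det\equiv 0\bmod p\}=p^{n^2-1}(1+O(p^{-1/2}))$, which is the source of the weaker exponent. The count of $\det\equiv 0\bmod p^2$ is bounded by separating smooth from singular $\F_p$-points of $\{\det=0\}$ and using that the singular locus has codimension at least two.

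Your route---rank stratification, the observation that only rank $n-1$ contributes, and Hensel lifting through the nonvanishing adjugate---is more elementary and yields an exact formula. Its cost is that it is tailored to the determinant: the paper's Lang--Weil and singular-locus argument is deliberately written so that the same template transfers verbatim to the Pfaffian (Lemma~\ref{lem: stability Pff}) and, with more work, to the symmetric case, where no clean analogue of your rank/adjugate computation is available. So your proof buys a cleaner constant and a better error, while the paper's buys portability across the family of examples treated later.
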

\begin{proof}
We first notice the following local equidistribution property of $\Mat_n$: For any prime power $q^k$ and any $0\leq a,b<q^k$ with $(a,q^k)=(b,q^k)$, we have that \begin{align}\#\{v\in \Mat_n(\Z/q^k\Z): \det(v)=a\}&=\#\{v\in \Mat_n(\Z/q^k\Z): \det(v)=b\}.\label{Local Equidistribution}\end{align} It is then easy to see that for $q$ a prime that is not $p$: $$\#\{v\in \Mat_n(\F_q): \det(v)=p\} = \frac{\#\{v\in \Mat_n(\F_q): \det(v)\in \F_q^\times\}}{\varphi(q)},$$ and  $$\lim_{k\rightarrow\infty}\frac{\#\{v\in \Mat_n(\Z/q^k\Z): \det(v)=p\}}{q^{k(n^2-1)}} = \frac{\#\{v\in \Mat_n(\F_q): \det(v)\in \F_q^\times\}}{\varphi(q)}.$$
We recognize the right hand side as the $q$-factor in the Euler product of $\sing_{\Mat_n,\det}.$ We now prove Eq. \ref{Local Equidistribution}: 
We do this for $a=k=1,$ since the general case is similar. We consider the map
$$\{A\in \Mat_n(\Z/q\Z):\det(A)=1\}\rightarrow \{B\in \Mat_n(\Z/q\Z):\det(B)=b\}$$
given by $$A \mapsto \begin{pmatrix}
    b&0&... &0\\0&1&...&0\\\vdots&\vdots&\vdots&\vdots\\ 0&0&...&1
\end{pmatrix}A$$ for $A$ with $\det(A) =1$. Since $\gcd(b,q)=1$, this matrix $\text{diag}(b,1,...,1)$ is invertible in $\Mat_n(\F_q)$, so this gives us a bijection between the two sets. Thus, we get the equidistribution property. It follows that the local density at $q$ of the singular series $\sing_p$ and $\sing_{\Mat_n,\det}$ match at all prime $q\neq p$.

So, it remains to check that the factors almost match at the prime $p$. Since $\{\det(v) = 0\}$ is an absolutely irreducible variety over $\overline{\Q}$, we know that for $p$ sufficiently large this will be an absolutely irreducible variety over $\F_p$. Hence, the Lang-Weil bound \cite{LangWeil} implies that the following holds: \begin{equation}\label{eq: lang-weil} \#\{v\in \Mat_n(\Z/p\Z): \det(v)=p\}=p^{\dim(V)-1}(1+O(p^{-1/2})).\end{equation} Thus, we know that $$\frac{p^{-\dim(V)+1}\#\{v\in V(\F_p):F(v)\in \F_p^\times\}}{\varphi(p)}=1+O(p^{-\frac{3}{2}}).$$ 

On the other hand, from the equidistribution property (\ref{Local Equidistribution}), for $k\geq 2$ we have that
 $$\lim_{k\rightarrow\infty}\frac{\#\{v\in \Mat_n(\Z/p^k\Z): \det(v)\equiv p \bmod p^k\}}{p^{k(n^2-1)}} =\frac{\#\{v\in \Mat_n(\Z/p^2\Z): \det(v)\equiv p \bmod p^2\}}{p^{2(n^2-1)}}.$$
 Evaluating the right-hand side, we have that the expression equals:
 \begin{align*} 
 &\frac{\#\Mat_n(\Z/p^2\Z)}{\varphi(p)p^{2(n^2-1)}}-\frac{\#\{v\in \Mat_n(\Z/p^2\Z): \det(v)\in (\Z/p^2\Z)^{\times}\}}{\varphi(p)p^{2(n^2-1)}} -\frac{\#\{v\in \Mat_n(\Z/p^2\Z): \det(v)\equiv 0 \bmod p^2\}}{\varphi(p)p^{2(n^2-1)}} \\ &= p+1+O(p^{-1}) -\frac{p\#\{v\in \Mat_n(\F_p): \det(v)\in \F_p^\times\}}{\varphi(p)p^{n^2-1}}-\frac{\#\{v\in \Mat_n(\Z/p^2\Z): \det(v)\equiv 0 \bmod p^2\}}{\varphi(p) p^{2(n^2-1)}}\\&= 1-\frac{\#\{v\in \Mat_n(\Z/p^2\Z): \det(v)\equiv 0 \bmod p^2\}}{p^{2n^2-1}}+O(p^{-\frac{1}{2}}).
 \end{align*}
In the second equality, we have used (\ref{Local Equidistribution}) to handle the middle term. In the third equality, we have applied (\ref{eq: lang-weil}). It now suffices to show that for $p$ sufficiently large, 
\begin{equation}\label{eq: upp bd zero set}\#\{v\in \Mat_n(\Z/p^2\Z): \det(v)\equiv 0 \bmod p^2\}=O(p^{2n^2-2}),\end{equation}
since it will follow that 
$$\lim_{k\rightarrow\infty}\frac{\#\{v\in \Mat_n(\Z/p^k\Z): \det(v)\equiv p \bmod p^k\}}{p^{k(n^2-1)}} = 1+ O(p^{-1/2}).$$

Let $H=\{v\in \Mat_n(\F_p):\det(v) \equiv 0 \bmod p\text{ and $\nabla{\det}(v)\neq \b0 \bmod p$}\}$ denote the points of $\{v\in\Mat_n(\F_p): \det(v)\equiv 0 \bmod p\}$ that admit a Hensel lifting (see \cite{conrad}), and let $H^c=\{v\in \Mat_n(\F_p):\det(v) \equiv 0 \bmod p\text{ and $\nabla{\det}(v)= \b0 \bmod p$}\}$ denote its complement. We then have $$\#\{v\in \Mat_n(\Z/p^2\Z): \det(v)\equiv 0 \bmod p^2\}\leq \#H\cdot p^{n^2-1}+\#H^c\cdot p^{n^2}.$$
By the trivial bound we obtain $$\#H\leq \#\{v\in \Mat_n(\F_p):\det(v) \equiv 0 \bmod p\}\ll p^{n^2-1}.$$ Therefore, in order to show (\ref{eq: upp bd zero set}), it suffices to show $$\#H^c=O(p^{n^2-2}).$$

Indeed, since $\{v\in \Mat_n(\C): \det(v) =0\}$ is geometrically irreducible over $\C$, we know that $\{v\in \Mat_n(\F_p):\det(v) \equiv 0 \bmod p\}$ is geometrically irreducible over $\overline{\F_p}$ for all sufficiently large primes $p$. So, the codimension of the singular locus of $\{v\in \Mat_n(\F_p): \det(v)=0\bmod p\}$ will be at least one for such primes. As such, we know that $\dim(\{v\in \Mat_n(\F_p): \det(v)\equiv 0 \bmod p, \nabla \det(v) = \b0\bmod p\}) \leq n^2-2.$ Hence, we have that $$\#H^c=\left|\{v\in \Mat_n(\F_p): \det(v)\equiv 0 \bmod p, \textrm{ and }\nabla \det(v) = \b0\bmod p\}\right\vert\ll p^{n^2-2}.$$ This completes the proof of (\ref{eq: upp bd zero set}).

Finally, we put all the local densities together. For $p\neq q$, we have that $$\lim_{k\rightarrow\infty}\frac{\#\{v\in \Mat_n(\Z/q^k\Z): \det(v)=p\}}{q^{k(n^2-1)}} = \frac{\#\{v\in \Mat_n(\F_q): \det(v)\in \F_q^\times\}}{\varphi(q)}.$$
On the other hand, the local density at $p$ is given by $$\lim_{k\rightarrow\infty}\frac{\#\{v\in \Mat_n(\Z/p^k\Z): \det(v)\equiv p \bmod p^k\}}{p^{k(n^2-1)}} = 1+ O(p^{-1/2}).$$
Thus, we have that $$\sing_p = \sing_{\Mat_n,\det}(1+O(p^{-1/2})).$$
\end{proof}

\subsection{Counting primes in cones}\label{sec: PNT cones} Using the results of this section we are now in a position to prove Theorem \ref{PNT cones Hardy-Littlewood}: 

\begin{proof}[Proof of Theorem \ref{PNT cones Hardy-Littlewood}]
From Theorem \ref{thm: Oh det}, for $T$ sufficiently large, we know that
\begin{align*}\pi_{\Mat_n,\det}(T\Omega) &= (1+o_\Omega(1))\mu_\infty(\Omega)\sum_{p\leq T^{n}}p^{n-1} \cdot \sing_p\\
&= (1+o_{\Omega}(1)) \mu_\infty(\Omega) \sing_{\Mat_n,\det} \sum_{p\leq T^{n}} p^{n-1}(1+O(p^{-1/2}))\\
&= (1+o_\Omega(1))\mu_\infty(\Omega)\sing_{\Mat_n,\det}\cdot \frac{T^{n}}{n^2\log(T)} \\
&= (1+o_\Omega(1)) \sing_{\Mat_n,\det} \int_{T\Omega} \frac{dx}{\log^+(\det(x))}.\end{align*}
The second equality follows from Lemma \ref{lem: stability of constants} and the fourth equality comes from Lemma \ref{lem: the real part for cones PNT}. We note that $\sing_{\Mat_n,\det}$ is the singular series given by the Bateman-Horn conjecture.  
\end{proof}
Finally, we remark that since $$\#\{v\in \Mat_n(\F_p):\det(v)\in \F_p^\times\} = \prod_{j=0}^{n-1}(p^n-p^j),$$
we can evaluate the singular series as $$\sing_{\Mat_n,\det} = \prod_{j=2}^n \zeta(j)^{-1}.$$

\subsection{Modifications for the Pfaffian polynomial}
The analysis for the Pfaffian on $2n\times 2n$ skew-symmetric matrices follows along the same lines as the proof of Theorem \ref{PNT cones Hardy-Littlewood} described above. In this subsection, we prove the corresponding prime number theorem on cones. 

Let $V = \Skew_{2n} = \{X\in \Mat_{2n}: X^T = -X\}$. 
For a nice compact subset $\Omega \subset V_1(\R)$, we define $$\pi_{\Skew_{2n},\Pff}(T,\Omega) = \#\{A\in T\Omega \cap \Skew_{2n}(\Z): \Pff(A) \text{ is prime}\}.$$
\begin{thm}\label{thm: PNT Pfaffian cones}
Let $\Omega$ be a nice compact subset of $V_1(\R) = \{X\in \SL_{2n}(\R):X^T = -X\}.$ Then as $T\rightarrow\infty$, the Bateman-Horn conjecture holds on $T\Omega$: 
$$\pi_{\Skew_{2n},\Pff}(T,\Omega) \sim \sing_{\Skew_{2n},\Pff} \cdot \int_{T\Omega} \frac{1}{\log^+(\Pff(\bx))}d\bx,$$
where $$\sing_{\Skew_{2n},\Pff} = \prod_{p} \left(\frac{1-p^{-(2n^2-n)}\#\{\bx\in \Skew_{2n}(\F_p):\Pff(x)\equiv 0 \bmod p\}}{(1-1/p)}\right).$$
\end{thm}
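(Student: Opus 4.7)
The plan is to mirror the three-part strategy used for Theorem \ref{PNT cones Hardy-Littlewood}: first realize $(\Skew_{2n},\Pff)$ as an instance of the general framework of \S\ref{sec: general framework} so that Oh's theorem (Theorem \ref{Oh's Theorem}) supplies an asymptotic count for $V_m(\Z)\cap m^{1/n}\Omega$; then prove stability of the Hardy--Littlewood singular series $\sing_p$; finally sum over primes $p\le T^n$ and invoke Lemma \ref{lem: the real part for cones PNT}.

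For the framework verification I would take $G=\SL_{2n}$ acting on $V=\Skew_{2n}$ by $X\mapsto g^T X g$, under which $\Pff$ is $G$-invariant of degree $n$. With $v_0=J$ the standard symplectic form, the stabilizer is $H=\textrm{Sp}_{2n}$, which for $n\ge 2$ is connected, simply connected as an algebraic group, semisimple, maximal in $\SL_{2n}$, and contains no compact factors. Over $\R$, any non-degenerate skew-symmetric form is congruent to $J$, and matching Pfaffians forces the congruence to lie in $\SL_{2n}$, so $V_1(\R)=v_0\,G\cong\SL_{2n}/\textrm{Sp}_{2n}$ is a single orbit; every $V_m$ is a scaling of $V_1$, and $V_1(\Q)$ contains $J$. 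Theorem \ref{Oh's Theorem} therefore yields
\[
\#V_m(\Z)\cap m^{1/n}\Omega=(1+o_\Omega(1))\,\mu_\infty(\Omega)\,m^{2n-2}\,\sing_m.
\]

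Next I would adapt Lemma \ref{lem: stability of constants} to the Pfaffian. The required local equidistribution identity
\[
\#\{v\in\Skew_{2n}(\Z/q^k\Z):\Pff(v)=a\}=\#\{v\in\Skew_{2n}(\Z/q^k\Z):\Pff(v)=b\},\quad (a,q^k)=(b,q^k),
\]
follows from the substitution $X\mapsto \mathrm{diag}(u,1,\dots,1)^T X\,\mathrm{diag}(u,1,\dots,1)$, which scales $\Pff$ by $u$ and produces a bijection between level sets. This forces the $q\ne p$ factors of $\sing_p$ and $\sing_{\Skew_{2n},\Pff}$ to match. At the prime $p$, absolute irreducibility of $\{\Pff=0\}\subset\Skew_{2n}$ (classical) combined with Lang--Weil gives a main term of size $p^{2n^2-n-1}(1+O(p^{-1/2}))$, and the Hensel lifting dichotomy of Lemma \ref{lem: stability of constants} reduces the remaining error to bounding the singular locus. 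Here that locus is the rank-$\le 2n-4$ stratum in $\Skew_{2n}$, of codimension $\binom{4}{2}=6$, comfortably exceeding the codimension-$2$ requirement. This yields $\sing_p=\sing_{\Skew_{2n},\Pff}(1+O(p^{-1/2}))$.

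Finally, summing Oh's asymptotic over primes $p\le T^n$ and invoking the prime number theorem gives
\[
\sum_{p\le T^n}p^{2n-2}\sim \frac{T^{2n^2-n}}{n(2n-1)\log T},
\]
which combined with Lemma \ref{lem: the real part for cones PNT} converts the count into $\sing_{\Skew_{2n},\Pff}\int_{T\Omega}d\bx/\log^+(\Pff(\bx))$. The main obstacle I anticipate is the geometric input at the residue prime: one needs absolute irreducibility of $\{\Pff=0\}$ and the codimension of its rank strata to hold uniformly in $p$ for all sufficiently large $p$, so that Lang--Weil and the Hensel dichotomy supply a $p$-uniform error term. Beyond that, the argument is a direct transcription of the proof of Theorem \ref{PNT cones Hardy-Littlewood}.
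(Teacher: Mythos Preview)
Your proposal is correct and follows essentially the same approach as the paper: cite Oh's Hardy--Littlewood asymptotic for $(\Skew_{2n},\Pff)$ (the paper quotes this as Example 4.2 of \cite{Oh} rather than re-verifying the framework hypotheses), establish the local equidistribution identity via $X\mapsto \mathrm{diag}(b,1,\dots,1)^T X\,\mathrm{diag}(b,1,\dots,1)$ to get $\sing_p=\sing_{\Skew_{2n},\Pff}(1+O(p^{-1/2}))$, then sum over primes and apply Lemma \ref{lem: the real part for cones PNT}. Your explicit computation of the singular-locus codimension as $\binom{4}{2}=6$ is more precise than the paper's generic codimension-$\ge 2$ argument, but this is a refinement rather than a different route.
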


 We start by noting that Theorem \ref{Oh's Theorem} also implies that this system is Hardy-Littlewood.
In particular, we have that by Example 4.2 of Oh \cite{Oh}:
\begin{thm}[Oh \cite{Oh}, Example 4.2]\label{thm: Oh Pff}
  For any nice compact subset $\Omega\subset V_1(\R)$, as $m\rightarrow\infty$: 
    $$\#V_m(\Z) \cap m^{1/n}\Omega = (1+o_\Omega(1)) \cdot \mu_\infty(\Omega) \cdot m^{2n-2} \cdot \sing_m,$$
    where $$\sing_m = \prod_{q\text{ prime}} \lim_{k\rightarrow\infty} \frac{\#\{X\in \Skew_{2n}(\Z/q^k\Z):\Pff(X) = m\}}{q^{k(n^2-1)}}$$
    and $\mu_\infty$ is the Bateman-Horn measure. 
\end{thm}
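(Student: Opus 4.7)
The statement is a direct instance of Theorem \ref{Oh's Theorem} applied to the quadruple
$$(V,G,H,F) \;=\; (\Skew_{2n},\ \SL_{2n},\ \Sp_{2n},\ \Pff),$$
where $\SL_{2n}$ acts on $\Skew_{2n}$ by congruence $X \mapsto g^{T} X g$. My plan is to verify the structural hypotheses from \S\ref{sec: general framework} in this setting, and then to read off the exponents in the asymptotic produced by the general theorem.

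Invariance and transitivity: the identity $\Pff(g^{T}Xg)=\det(g)\Pff(X)$ makes the Pfaffian $\SL_{2n}$-invariant. Taking the standard symplectic form $v_{0}=J=\left(\begin{smallmatrix}0&I_{n}\\-I_{n}&0\end{smallmatrix}\right)\in V_{1}(\Q)$, Darboux's theorem for skew forms over $\R$ says that every nondegenerate skew form with Pfaffian $1$ is $\SL_{2n}(\R)$-conjugate to $J$, so $v_{0}\SL_{2n}(\R)=V_{1}(\R)$. The stabilizer of $J$ is by definition $H=\Sp_{2n}$, giving $V_{1}\cong \SL_{2n}/\Sp_{2n}$; homogeneity of $\Pff$ yields $V_{m}=m^{1/n}V_{1}$, and $J\in V_{1}(\Q)$ provides the required rational point.

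Group-theoretic hypotheses: $\Sp_{2n}$ is a connected, simply connected, semisimple algebraic group; $\Sp_{2n}(\R)$ is the split real form of type $C_{n}$, so it has no compact simple factors; and $\Sp_{2n}\subset \SL_{2n}$ is a proper maximal connected subgroup (classical -- no proper Lie subalgebra lies strictly between $\mathfrak{sp}_{2n}$ and $\mathfrak{sl}_{2n}$). The orbit-finiteness condition is verified via the Smith normal form for integer skew-symmetric matrices, which produces only finitely many $\Sp_{2n}(\Z)$-equivalence classes of integer skew forms with each fixed Pfaffian and shows that distinct $m$ give distinct rescaled level sets in $V_{1}(\R)$.

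With the hypotheses in hand, Theorem \ref{Oh's Theorem} gives
$$\#V_{m}(\Z)\cap m^{1/n}\Omega \;=\; \mu_{\infty}(\Omega)\cdot m^{\dim(V)/\deg(F)-1}\cdot \sing_{m}\cdot(1+o_{\Omega}(1)),$$
and since $\dim(V)=2n^{2}-n$ and $\deg(F)=n$, the exponent simplifies to $(2n^{2}-n)/n-1=2n-2$, matching the claim. The Euler product for $\sing_{m}$ comes from the local-density factorization of the Tamagawa measure in Oh's framework. The main obstacle, modulo the cited general theorem, lies in the structural verifications above -- in particular the maximality of $\Sp_{2n}$ inside $\SL_{2n}$ and the uniform control of $\Sp_{2n}(\Z)$-orbits in $V_{m}(\Z)$ via the skew-symmetric Smith normal form -- since once these are in place, the asymptotic is an immediate consequence of Theorem \ref{Oh's Theorem}.
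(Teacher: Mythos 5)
Your proposal is correct and follows essentially the same approach as the paper: the paper cites Oh's Example 4.2, which is precisely Oh's general Theorem (stated here as Theorem \ref{Oh's Theorem}) specialized to $(V,G,H,F) = (\Skew_{2n}, \SL_{2n}, \Sp_{2n}, \Pff)$, and you have simply made explicit the verification of the structural hypotheses that Oh's example performs. One small side remark: the denominator exponent $q^{k(n^2-1)}$ in the paper's statement of $\sing_m$ appears to be a typo carried over from the determinant case (Theorem \ref{thm: Oh det}); your derivation correctly gives $q^{k\dim(V_m)} = q^{k(2n^2-n-1)}$, which is what Theorem \ref{Oh's Theorem} produces.
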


Next, we want to prove that these singular series $\sing_{\Skew_{2n},\Pff}$ and $\sing_m$ are similar when $m=p$ is a large prime. 

\begin{lemma}\label{lem: stability Pff}
    For any sufficiently large prime $p$, we have that $$\sing_p = \sing_{\Skew_{2n},\Pff}\cdot (1+O(p^{-1/2})).$$
\end{lemma}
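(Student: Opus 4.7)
The proof will follow the same template as Lemma \ref{lem: stability of constants}, with the key structural ingredient being a local equidistribution property for the Pfaffian. The main task is to find the correct analog of the multiplication-by-$\operatorname{diag}(b,1,\dots,1)$ trick that worked for the determinant; since that operation does not preserve skew-symmetry, the natural replacement is the congruence action $X \mapsto g X g^{T}$ of $\GL_{2n}$ on $\Skew_{2n}$, under which $\Pff(gXg^{T}) = \det(g) \cdot \Pff(X)$. Taking $g = \operatorname{diag}(b,1,\dots,1)$ and noting that $\det$ surjects $\GL_{2n}(\Z/q^k\Z) \twoheadrightarrow (\Z/q^k\Z)^{\times}$ establishes the analog of \eqref{Local Equidistribution}: for any prime power $q^k$ and any $a,b \in \Z/q^k\Z$ with $(a,q^k) = (b,q^k)$,
$$\#\{X \in \Skew_{2n}(\Z/q^k\Z) : \Pff(X) = a\} = \#\{X \in \Skew_{2n}(\Z/q^k\Z) : \Pff(X) = b\}.$$

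From here the argument at primes $q \neq p$ is immediate: the local density of $\sing_p$ at $q$ becomes $\#\{X \in \Skew_{2n}(\F_q) : \Pff(X) \in \F_q^{\times}\} / (\varphi(q) \cdot q^{\dim V - 1})$, which is exactly the $q$-factor in the Euler product defining $\sing_{\Skew_{2n},\Pff}$. So the local factors match on the nose away from~$p$, and the entire content of the lemma is concentrated at the prime $p$.

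At the prime $p$, I would mirror the determinant argument step by step. First, since $\Pff$ is irreducible over $\overline{\Q}$, the hypersurface $\{\Pff(X) = 0\} \subset \Skew_{2n}$ is absolutely irreducible, hence geometrically irreducible over $\overline{\F_p}$ for all sufficiently large $p$, and the Lang--Weil bound \cite{LangWeil} yields
$$\#\{X \in \Skew_{2n}(\F_p) : \Pff(X) = 0\} = p^{\dim V - 1}(1 + O(p^{-1/2})).$$
Combined with the equidistribution property applied at level $q = p$ (on residues coprime to $p$), this gives the mod-$p$ count of $\{\Pff(X) \equiv p \bmod p\}$ to the required precision. To pass from mod $p$ to mod $p^k$, I would decompose the zero locus into smooth points, which each contribute $p^{\dim V - 1}$ Hensel lifts with $\Pff = p$ at each higher level, and singular points $H^{c} = \{X : \Pff(X) \equiv 0, \nabla \Pff(X) \equiv \mathbf{0} \bmod p\}$, which are bounded individually by the trivial $p^{\dim V}$. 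The same expansion as in Lemma \ref{lem: stability of constants} then reduces the claim to the codimension estimate $\#H^{c} \ll p^{\dim V - 2}$.

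The main (and essentially only) obstacle is this codimension bound on the singular locus of the Pfaffian hypersurface. For the determinant case this followed from the fact that an irreducible variety has singular locus of strictly smaller dimension; the identical reasoning applies here, since absolute irreducibility of $\{\Pff = 0\}$ over $\overline{\F_p}$ forces $\dim\{\nabla \Pff \equiv \mathbf{0}\} \cap \{\Pff \equiv 0\} \leq \dim V - 2$ for $p$ large. (In fact the Pfaffian variety is well known to be normal for $2n \geq 4$ with singular locus of codimension at least two in the ambient space, giving plenty of room.) Stitching the local factors together then produces $\sing_p = \sing_{\Skew_{2n},\Pff} \cdot (1 + O(p^{-1/2}))$, completing the proof.
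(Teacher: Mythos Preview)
Your proposal is correct and follows essentially the same approach as the paper: both reduce the lemma to establishing the local equidistribution property for the Pfaffian, and both realize the bijection via the congruence action $X \mapsto gXg^{T}$ with $g = \operatorname{diag}(b,1,\dots,1)$, then defer the rest to the argument of Lemma~\ref{lem: stability of constants}. The only cosmetic difference is that you invoke the standard identity $\Pff(gXg^{T}) = \det(g)\Pff(X)$ directly, whereas the paper re-derives this particular instance via the Pfaffian cofactor expansion along the first row.
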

 \begin{proof}
 We note that, in the proof of Lemma \ref{lem: stability of constants}, the only aspect that was specific to $(\Mat_n,\det)$ was the local equidistribution property (\ref{Local Equidistribution}). So, it suffices to prove a local equidistribution property for $(\Skew_{2n},\Pff):$ For any $0\leq a,b <q^k$ and $\gcd(a,q^k) = \gcd(b,q^k)$, we have that $$\#\{X\in \Skew_{2n}(\Z/q^k\Z): \Pff(X) = a\} = \#\{X\in \Skew_{2n}(\Z/q^k\Z): \Pff(X)= b\}.$$

 Again, we show this fact when $q$ is a prime and $(b,q)=1$:
\begin{equation}\label{eq: local solution Pff equidistribution}
    \#\{X \in \Skew_{2n}(\F_q): \Pff(X) \equiv 1 \bmod q\} = \#\{X\in \Skew_{2n}(\F_q): \Pff(X) \equiv b \bmod q\}.
\end{equation}
The cases when $a\neq 1$ and $k\neq 1$ can be handled by the same proof. 

We consider the map
$$\varphi:\{A\in \Skew_{2n}(\F_q):\Pff(A) \equiv 1 \bmod q\} \rightarrow \{B\in \Skew_{2n}(\F_q): \Pff(B) \equiv b\bmod q\}$$
given by:
$$\varphi(A) = \begin{pmatrix}
    b&0&... &0\\0&1&...&0\\\vdots&\vdots&\vdots&\vdots\\ 0&0&...&1
\end{pmatrix} A \begin{pmatrix}
    b&0&... &0\\0&1&...&0\\\vdots&\vdots&\vdots&\vdots\\ 0&0&...&1
\end{pmatrix}.$$
Clearly, if $A^T = -A$, then $\varphi(A)^T = -\varphi(A)$. We also know that $$\Pff(A) = \sum_{j=2}^{2n} (-1)^j a_{1j} \Pff(A_{\hat{1}\hat{j}}),$$
where $A_{\hat{i}\hat{j}}$ is the minor created by removing the $i$th and $j$th row and column. Since this map $\varphi$ results in multiplying the first row and column both by $b$, we can see that $$\Pff(\varphi(A)) = \sum_{j=2}^{2n} ba_{1j} \Pff(\varphi(A)_{\hat{i}\hat{j}}) = b\sum_{j=2}^{2n} a_{1j} \Pff(A_{\hat{i}\hat{j}}) = b\Pff(A).$$
So, we do indeed have $\varphi: \{X\in \Skew_{2n}(\F_q): \Pff(X) \equiv 1 \bmod q\} \rightarrow \{X\in \Skew_{2n}(\F_q): \Pff(X) \equiv b \bmod q\}.$ Since $\text{diag}(b,1,1,...,1)$ is invertible, $\varphi$ is a bijection between the two sets, so we get that if $(q,b)=1$, the local equidistribution property (\ref{eq: local solution Pff equidistribution}) holds. 
\end{proof}

Now, we have all the ingredients necessary to prove Theorem \ref{thm: PNT Pfaffian cones}. 
\begin{proof}[Proof of Theorem \ref{thm: PNT Pfaffian cones}]
    From Theorem \ref{thm: Oh Pff}, for $T$ sufficiently large, we know that
\begin{align*}\pi_{\Skew_{2n},\Pff}(T\Omega) &= (1+o_\Omega(1))\mu_\infty(\Omega)\sum_{p\leq T^{n}}p^{2n-2} \cdot \sing_p\\
&= (1+o_{\Omega}(1)) \mu_\infty(\Omega) \sing_{\Skew_{2n},\Pff} \sum_{p\leq T^{n}} p^{2n-2}(1+O(p^{-1/2}))\\
&= (1+o_\Omega(1))\mu_\infty(\Omega)\sing_{\Skew_{2n},\Pff}\cdot \frac{T^{n(2n-1)}}{n(2n-1)\log(T)} \\
&= (1+o_\Omega(1)) \sing_{\Skew_{2n},\Pff} \int_{T\Omega} \frac{d\bx}{\log^+(\Pff(\bx))}.\end{align*}
The second equality follows from Lemma \ref{lem: stability Pff} and the fourth equality comes from Lemma \ref{lem: the real part for cones PNT}. We note that $\sing_{\Skew_{2n},\Pff}$ is the singular series given by the Bateman-Horn conjecture.  
\end{proof}

Finally, we evaluate the singular series: $$\sing_{\Skew_{2n},\Pff} = \prod_{p} \frac{\#\{X\in \Skew_{2n}(\F_p): \det(X) \in \F_p^\times\}}{p^{n(2n-1)}(1-1/p)}.$$ Applying the result of MacWilliams in \cite{MacWilliams}, 
    \begin{equation}\label{eq: pfaffian local count}\#\{X\in \Skew_{2n}(\F_p): \det(X) \in \F_p^\times\} = \prod_{i=0}^{n-1}(p^{2n-1}-p^{2i}).\end{equation}
    Thus, we get that $$\sing_{\Skew_{2n},\Pff} = \prod_{p} (1-1/p)^{-1}\prod_{i=0}^{n-1}(1-p^{1+2i-2n}) = \prod_{\substack{3\leq j\leq 2n-1\\ j\textrm{ odd}}}\zeta(j)^{-1}.$$

\section{$\epsilon$-cuttings}\label{sec: eps cutting}

In this section, we will no longer require the dynamical set up of \S\ref{sec: general framework}. We assume that $V$ is a linear vector space of dimension larger than one and $F$ is a homogeneous polynomial that is irreducible over $\overline{\Q}$.  
Again, we denote our level sets:
$$V_m = \{v\in V: F(v)=m\}.$$
We note that since $F$ is homogeneous, we know that $$V_m = m^{1/\deg(F)} V_1.$$

Our goal for the following two sections will be to prove the following theorem:
\begin{thm}\label{thm: cones to boxes}
     Assume that for any nice compact $\Omega\subset V_1(\R)$, as $T\rightarrow \infty$, we have that $$\pi_{V,F}(T\Omega) = (1+o_\Omega(1)) \sing_{V,F} \cdot \int_{T\Omega} \frac{1}{\log^+(F(x))}dx.$$
If a compact set $\calR_0 \subset V(\R)$ has the following properties: 
\begin{enumerate}
    \item If $x\in\calR_0$ and $t\in[0,1]$, then $tx\in\calR_0,$
    \item The compact set $[0,\infty)\calR_0\cap V_1$ is nice,
\end{enumerate}
    then we have that as $T\rightarrow\infty$, 
    $$\pi_{V,F}(T\calR_0) = (1+o(1)) \sing_{V,F} \cdot \int_{T\calR_0} \frac{1}{\log^+(F(x))}dx.$$

    Moreover, if we also have that for any nice compact $\Omega \subset V_{-1}(\R)$, as $T\rightarrow\infty$, the following holds: $$\#\{x \in T\Omega: -F(v) \text{ is prime}\} =(1+o_\Omega(1)) \sing_{V,F} \int_{T\Omega} \frac{1}{\log^+(|F(\bx)|)}dx,$$
    then Conjecture \ref{conj: Bateman-Horn} holds for the compact regions $\calR_0\subset V(\R)$.
\end{thm}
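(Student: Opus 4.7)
The plan is to sandwich $\calR_0$ between an inner and outer approximation by finite unions of cones, apply the hypothesized cone-level prime number theorem to each piece, and absorb the error on the symmetric difference via the uniform sieve upper bound announced in Proposition~\ref{prop: upper bound}.

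First I would construct the $\epsilon$-cutting. Set $\Omega:=[0,\infty)\calR_0\cap V_1(\R)$, which is nice by assumption. The star-shaped property (1) means that for each $\omega\in\Omega$ the set $\{t\geq 0 : t\omega\in\calR_0\}$ is a segment $[0,r(\omega)]$, defining an upper semicontinuous radial function $r\colon\Omega\to[0,\infty)$. Given $\epsilon>0$, partition $\Omega$ into finitely many nice pieces $\Omega_1,\dots,\Omega_{N_\epsilon}$ on each of which the oscillation of $r$ is at most $\epsilon$. Writing $r_i^\pm$ for the sup/inf of $r$ on $\Omega_i$, set
$$\calR_0^{-,\epsilon}\;:=\;\bigsqcup_{i=1}^{N_\epsilon}[0,r_i^-]\Omega_i\;\subset\;\calR_0\;\subset\;\bigsqcup_{i=1}^{N_\epsilon}[0,r_i^+]\Omega_i\;=:\;\calR_0^{+,\epsilon}.$$
A standard Riemann integration argument (using the niceness of $\Omega$ together with upper/lower Darboux sums for $r$) shows that $|\calR_0^{+,\epsilon}\setminus\calR_0^{-,\epsilon}|\to 0$ as $\epsilon\to 0$.

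Next, for $\epsilon$ fixed, I apply the hypothesized cone PNT to each of the $N_\epsilon$ cones $T[0,r_i^\pm]\Omega_i$ and sum to obtain
$$\pi_{V,F}(T\calR_0^{\pm,\epsilon})\;=\;(1+o_\epsilon(1))\,\sing_{V,F}\int_{T\calR_0^{\pm,\epsilon}}\frac{dx}{\log^+(F(x))}$$
as $T\to\infty$. Sandwiching $\pi_{V,F}(T\calR_0)$ between these counts and bounding the gap with Proposition~\ref{prop: upper bound} applied to $\calR_0^{+,\epsilon}\setminus\calR_0^{-,\epsilon}$ yields
$$0\;\leq\;\pi_{V,F}(T\calR_0^{+,\epsilon})-\pi_{V,F}(T\calR_0^{-,\epsilon})\;\ll_F\;|\calR_0^{+,\epsilon}\setminus\calR_0^{-,\epsilon}|\cdot\frac{T^{\dim V}}{\log T},$$
while Lemma~\ref{lem: the real part for cones PNT} shows that the main term is of order $T^{\dim V}/\log T$ with implicit constant depending only on $\calR_0$. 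Letting $T\to\infty$ first and then $\epsilon\to 0$ pinches out the claimed asymptotic. For the ideal-valued version of Conjecture~\ref{conj: Bateman-Horn}, one splits $\calR_0$ into $\calR_0\cap\{F>0\}$ and $\calR_0\cap\{F<0\}$, applies the argument to each (using the second hypothesis on the negative part), and notes that $\calR_0\cap\{F=0\}$ has Euclidean measure zero and so contributes negligibly.

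The main obstacle is constructing the partition $\{\Omega_i\}$ so that each piece is itself nice (boundary of measure zero in $V_1(\R)$) and the radial oscillation is controlled. Since $r$ is only upper semicontinuous in general, a naive coordinate box partition of $V_1(\R)$ does not automatically bound the oscillation of $r$; the remedy is to subdivide a fixed Euclidean chart into boxes of diameter $\epsilon$, further refine each box along level sets of $r$, and discard any box meeting $\partial\Omega$---the discarded mass is $o_\epsilon(1)$ by the niceness of $\Omega$. Once this measure-theoretic construction is in hand, the remainder is bookkeeping, relying crucially on the fact that the sieve constant $c_F$ in Proposition~\ref{prop: upper bound} is independent of the region being sieved.
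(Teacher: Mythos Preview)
Your proposal follows essentially the same route as the paper: approximate $\calR_0$ by a finite union of cones over nice pieces of $V_1(\R)$ (what the paper calls an $\epsilon$-cutting, Definition~\ref{def: epsilon-cutting} and Proposition~\ref{prop: existence of epsilon cutting}), apply the cone-level hypothesis on each piece, control the remaining mass with the sieve of Proposition~\ref{prop: upper bound}, and let $\epsilon\to 0$. The only differences are cosmetic: you run a two-sided sandwich using the radial function $r(\omega)$, whereas the paper writes an exact inner decomposition $\calR_0=\calE\cup\bigcup_i[0,1/\height(\Omega_i)]\Omega_i$ and partitions $S_1\subset V_1$ by the ambient $L^\infty$-norm rather than by the oscillation of $r$.
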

\begin{remark}
    We note that the Euclidean box $\calB = [0,1]^{n}\subset \R^n$ would satisfy the conditions $(1),(2)$ above; hence, a consequence of Theorem \ref{thm: cones to boxes} is that under the assumptions given, Conjecture \ref{conj: DS Bateman-Horn} holds. 
\end{remark}

First, we will approximate the region $T\calR_0$ with such cones $T\Omega$ using $\epsilon$-cuttings.
\begin{defn}
    For $\Omega\subset V_1(\R)$, a nice compact subset, we define the \textbf{height} of the subset as $$\height(\Omega) = \sup_{A\in \Omega} \|A\|,$$
    where $\|.\|$ is the $L^\infty$ norm on $V(\R)$.
\end{defn}

\begin{defn}\label{def: epsilon-cutting}
An $\epsilon$-\textbf{cutting} of $\calR_0$ is a finite set of disjoint nice connected compact subsets of $V_1(\R)$, denoted as $$\calC_\epsilon = \{\Omega_{i}\}_{i=1}^{N(\epsilon)},$$ such that
$$\calR_0 = \calE \bigcup_{\Omega\in \calC_\epsilon} [0,1/\height(\Omega)]\Omega,$$
where the exceptional set satisfies that $|\calE| \leq \epsilon.$
\end{defn}
\begin{remark}
    We note that such an $\epsilon$-cutting of $\calR_0$ allows us to cut up $T\calR_0$ for any $T\geq 1$; we have that $$T\calR_0 = T\calE\cup_{\Omega\in C_\epsilon} [0,T/\height(\Omega)]\Omega,$$
    and that $|T\calE|\leq \epsilon T^{\dim(V)}.$
\end{remark}
From now on we will be assuming that the compact set $\calR_0$ has the following properties (as in Theorem \ref{thm: cones to boxes}): 
\begin{enumerate}
    \item If $x\in\calR_0$ and $t\in[0,1]$, then $tx\in\calR_0.$
    \item The compact set $[0,\infty)\calR_0\cap V_1$ is nice.
\end{enumerate}

Next, we show the existence of an $\epsilon$-cutting. 

\begin{prop}\label{prop: existence of epsilon cutting}
Fix an $\epsilon>0$ sufficiently small. Let $\calR_0\subset V(\R)$ satisfy assumptions (1) and (2) above. Then, there exists an $\epsilon$-cutting of $\calR_0\subset V(\R)$.
\end{prop}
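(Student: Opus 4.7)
Proof Plan. The plan is to construct the $\epsilon$-cutting by partitioning the radial shadow $S := [0,\infty)\calR_0 \cap V_1(\R)$ into small nice pieces, taking cones over them, and bounding the leftover Euclidean measure.

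By hypothesis (2), $S$ is a nice compact subset of $V_1(\R)$, and by hypothesis (1) each ray through $w \in S$ meets $\calR_0$ in a closed initial segment $[0,\phi(w)]\,w$ with $\phi : S \to [0,\infty)$ bounded. Fixing a small parameter $\delta > 0$, I would partition $S$ into pairwise disjoint nice connected compact pieces $\{\Omega_i\}_{i=1}^N \subset V_1(\R)$ of $V_1$-diameter at most $\delta$. To do this, I first remove an open $V_1$-neighborhood of the singular locus of $V_1(\R)$ of arbitrarily small $V_1$-measure; then, on the smooth locus of $S$, I cover by local coordinate charts and cut out pieces by intersecting with a generic grid of small Euclidean cubes, taking connected components. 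Pieces that meet $\partial S$ are discarded, and the niceness of $S$ ensures the total discarded $V_1$-measure tends to $0$ as $\delta \to 0$.

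For each remaining piece $\Omega_i$, I associate the cone $[0, 1/\height(\Omega_i)]\Omega_i$. Since $\Omega_i$ has small $V_1$-diameter, $\|w\|_\infty$ is nearly constant on $\Omega_i$ and the cone closely approximates the wedge $\{tw : 0 \leq t \leq 1/\|w\|_\infty,\ w \in \Omega_i\}$. If needed, I reshape $\Omega_i$ within $V_1(\R)$ by attaching an auxiliary high-$L^\infty$-norm point so that $1/\height(\Omega_i) \leq \inf_{w \in \Omega_i}\phi(w)$; star-shapedness (hypothesis (1)) then guarantees $[0, 1/\height(\Omega_i)]\Omega_i \subseteq \calR_0$. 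Setting $\calE := \calR_0 \setminus \bigcup_i [0, 1/\height(\Omega_i)]\Omega_i$, the exceptional set splits into three parts: (i) the thin radial shell between each cone and the true boundary $\{\phi(w)w : w \in \Omega_i\}$ of $\calR_0$; (ii) wedges over pieces discarded at $\partial S$; (iii) wedges over points of $S$ not covered by any $\Omega_i$. Each contribution has Euclidean measure $O(\delta)$ by the co-area formula (as used in the proof of Lemma \ref{lem: the real part for cones PNT}), so taking $\delta$ sufficiently small yields $|\calE| \leq \epsilon$.

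The main obstacle I anticipate is the reshaping step: the cone height $1/\height(\Omega_i)$ is determined purely by the piece $\Omega_i$, so when the radial profile $\phi(w)$ of $\calR_0$ in some direction is strictly less than $1/\|w\|_\infty$, one must actively enlarge $\Omega_i$ within $V_1(\R)$ to bring $\height(\Omega_i)$ up to $1/\phi(w_0)$ for a representative $w_0$. This requires $V_1(\R)$ to extend to sufficiently large $L^\infty$-norm, which for absolutely irreducible homogeneous $F$ with $V_1(\R)$ unbounded is readily arranged by thin ``strip-like'' extensions contributing only $o(1)$ in $V_1$-measure; in bounded cases the worst-case overshoot is instead absorbed into $\calE$ and controlled by the same co-area bound. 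Combining this reshaping with the estimates for (i)--(iii) completes the argument.
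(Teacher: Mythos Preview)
Your plan differs from the paper's and overcomplicates the construction. The paper sidesteps your reshaping obstacle entirely by a smarter choice of partition: after first removing from $\calR_0$ a small-volume neighborhood $L_\delta$ of $\{F=0\}\cap\calR_0$ in the ambient space (so that the radial projection $S_1=\R_+(\calR_0\setminus L_\delta)\cap V_1(\R)$ is compact---note this is done in $V(\R)$, not by removing the singular locus inside $V_1$), it partitions $S_1$ not into arbitrary small-$V_1$-diameter pieces but into \emph{level slabs of the $L^\infty$-norm}, setting $S_{1,k}=\{v\in S_1:\|v\|\in[m_k-\eta,m_k]\}$ for a fine subdivision $\{[m_k-\eta,m_k]\}$ of the range of $\|\cdot\|$ on $S_1$. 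With this choice $\height(S_{1,k})=m_k$ by construction, so every point of the cone $[0,1/m_k]S_{1,k}$ has $L^\infty$-norm at most $1$; for the unit box (the case actually used in the applications) this lands the cone inside $\calR_0$ automatically, and the leftover volume is controlled by the squeeze $\bigcup_k[0,1/m_k]S_{1,k}\subset S\subset\bigcup_k[0,1/(m_k-\eta)]S_{1,k}$, which gives $|\calE|=O(\eta)$ directly.

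Your reshaping step, by contrast, has a real gap. If you attach a high-norm point $w^*\in V_1(\R)$ to $\Omega_i$ with $w^*\notin S=[0,\infty)\calR_0\cap V_1$, then the ray through $w^*$ never enters $\calR_0$ (except at the origin), so $[0,1/\height(\Omega_i)]\{w^*\}\not\subset\calR_0$ and the required equality $\calR_0=\calE\cup\bigcup_i[0,1/\height(\Omega_i)]\Omega_i$ fails. If instead you insist $w^*\in S$, then since $S$ is compact you cannot raise $\|w^*\|$ arbitrarily, and there is no guarantee you can achieve $\height(\Omega_i)\ge 1/\inf_{w\in\Omega_i}\phi(w)$. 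The paper's norm-slab partition avoids this because it does not attempt to adjust the height after the fact; the partition is built so that height and radial profile already match.
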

\begin{proof}
Let $L = \calR_0 \cap \{v\in V(\mathbb{R}): F(v)=0\}.$ Since $\calR_0$ is compact, $L$ is a compact set. Moreover, $\vol(L)=0$, since $L$ is contained in a subvariety of codimension one. Hence, we can find a sufficiently small $\delta$, such that the set $$L_{\delta}=\{v\in \calR_0: \textrm{dist}(v,L)<\delta \}$$ has volume less than $\epsilon/2$. We set $\calE_0 = L_\delta.$

Let us consider the set $S = \calR_0 - L_\delta$ and split $S$ into its connected components and such that each component is contained in a separate orbit of the action of $G(\R)$ on $V(\R)$, which will produce a finite number $k_{V}$ of $S = \cup_{i\leq k_{V}} S_i$, where $k_{V}$ is independent of $\epsilon$. We will abuse notation and denote one of these components as $S$. Now, we wish to split $S$ up further. 

We consider $$S_1 = \R_+ S \cap V_1(\R)$$ and claim that $S_1$ is a compact set. Indeed, since $S$ is a compact set disjoint from the zero locus of $F$, we know that $$m_0(\epsilon)=\min_{u\in S}\left\vert F(u)\right\vert>0.$$ Thus, $S_1 \subset V_1(\R) \cap [0,1/m_0(\epsilon)^{1/\deg(F)}]S$ and hence it is compact. So, we have that $$0<m(\epsilon)=\inf_{v\in S_1} \|v\|\leq\sup_{v\in S_1} \|v\| = M(\epsilon)<\infty.$$ 

Let $\epsilon'$ be a function of $\epsilon$ that we will determine later. Let us split the interval $[m(\epsilon), M(\epsilon)]$ into intervals of length at most $\eta(\epsilon) = \epsilon'\min(1,M(\epsilon)^{-1})$ and enumerate these intervals as $I_k=[m_k-\eta(\epsilon),m_k]$ (We can assume that $M(\epsilon)-m(\epsilon)$ is an integer multiple of $\eta(\epsilon)$, after passing to a possibly smaller constant). Then we set $$S_{1,k} = \{v\in S_1: \|v\| \in I_k\}.$$ and finally $$S'=\bigcup_{k} [0, 1/m_k] S_{1,k}.$$
Note that $S'\subset S\subset \cup_k [0,1/(m_k-\eta(\epsilon))] S_{1,k}$. Observe now that $$\vol([1/m_k, 1/(m_k-\eta(\epsilon))]S_{1,k})\leq (m_k-\eta(\epsilon))^{-\dim(V)} - m_k^{-\dim(V)} \leq c_V \eta(\epsilon)m_k^{-\dim(V)},$$
where $c_V$ is a constant in terms of $\dim(V)$. So, we know that$$\sum_{k} \eta(\epsilon)m_k^{-\dim(V)} \leq C \eta(\epsilon) \int_{m(\epsilon)}^{M(\epsilon)} m^{-\dim(V)} dm\leq C_V' \eta(\epsilon),$$
where $C, C_V'>0$ are constants. 
Thus, we have that by squeezing, $$\vol(S-S') \leq c_VC_V' \epsilon'.$$
We choose $\epsilon' \leq \frac{\epsilon}{2c_VC_V'}$ and set $\calE = (S-S') \cup \calE_0.$ For such a choice, we have that $$\calR_0 = \calE \cup \bigcup_{i}\bigcup_{k} [1,\height(S_{i,k})]S_{i,k},$$
with $\vol(\calE)\leq \epsilon.$
\end{proof}

\section{An upper bound sieve}\label{sec: sieve}
In this section, we will first establish a general upper bound for the number of prime values of an irreducible multivariate polynomial; this upper bound will be of the same size as what the Bateman-Horn conjecture (Conjecture \ref{conj: DS Bateman-Horn}) predicts. We note that we do not need the set up of the general framework for this bound.

Let $F(\bx)\in \Z[\bx]$ be an irreducible polynomial in $n\geq 2$ variables. Let us denote the box of height $T$ by $$B(T) := \{\bx\in \R^n: \|\bx\|_{\infty}\leq T\}.$$
Let us also write $\scrP$ as the set of primes.
Then we get the following upper bound for the number of prime values of $F(\bx)$:
\begin{prop}\label{prop: upper bound}
    Let $\calR\subset B(T)$ be a convex region. Then there is a constant $c_F>0$ such that $$\#\{\bx\in \calR(\Z): F(\bx)\in \scrP\} \leq c_F\left(\frac{|\calR|}{\log(T)} + T^{n-1/2}\right).$$ 
\end{prop}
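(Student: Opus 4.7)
The plan is to apply an upper bound sieve of Selberg type to the set $\calR(\Z)$, sieving out the local conditions $F(\bx)\not\equiv 0\bmod p$ for primes $p\leq z$. If $F(\bx)$ is prime and $|F(\bx)|>z$, then $F(\bx)$ is automatically coprime to $P(z)=\prod_{p\leq z}p$, so the sifted count directly controls our quantity modulo a separate estimate for the $\bx\in\calR(\Z)$ with $|F(\bx)|\leq z$; this latter set has cardinality $O(T^{n-1}z^{1/\deg F})$ by a standard slicing argument, which will be absorbed into the error once $z$ is chosen as a small power of $T$.

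For the sieve inputs I would set $\omega(d)=\#\{\bx\in(\Z/d\Z)^n:F(\bx)\equiv 0\bmod d\}$, which is multiplicative on squarefree $d$. Using that $\calR\subset B(T)$ is convex and the classical lattice point count for convex regions, for each residue class $\mathbf{a}\bmod d$ one has
$$\#\{\bx\in\calR(\Z):\bx\equiv\mathbf{a}\bmod d\}=\frac{|\calR|}{d^n}+O\!\left(\frac{T^{n-1}}{d^{n-1}}\right),$$
uniformly in $\mathbf{a}$. Summing over the $\omega(d)$ roots of $F\bmod d$ yields the congruence count
$$\#\{\bx\in\calR(\Z):d\mid F(\bx)\}=\frac{\omega(d)\,|\calR|}{d^n}+R_d,\qquad |R_d|\ll\frac{\omega(d)\,T^{n-1}}{d^{n-1}}.$$
Since $F$ is absolutely irreducible, the Lang-Weil bound gives $\omega(p)=p^{n-1}(1+O(p^{-1/2}))$ for $p$ large, so the sifting density is essentially $1/p$ and Mertens' theorem yields $\prod_{p\leq z}(1-\omega(p)/p^n)\asymp 1/\log z$.

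Now I would apply Selberg's upper bound sieve at level $D=T^{1/4}/(\log T)^A$ with $A$ large, and sieve parameter $z=T^{\delta}$ for a sufficiently small $\delta>0$, producing
$$\#\{\bx\in\calR(\Z):(F(\bx),P(z))=1\}\ll\frac{|\calR|}{\log z}+\sum_{\substack{d\leq D^2\\ d\mid P(z)}}3^{\omega(d)}|R_d|,$$
with implied constant depending only on $F$. Since $\log z\asymp\log T$, the main term is $\ll|\calR|/\log T$. Using $\omega(d)/d^{n-1}\ll_F 1$ on squarefree $d$ together with the standard divisor-sum estimate, the error sum is $\ll D^2T^{n-1}(\log T)^{O(1)}\ll T^{n-1/2}$. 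Combined with the trivial bound for the $|F(\bx)|\leq z$ contribution, this gives the claimed inequality $c_F\,(|\calR|/\log T+T^{n-1/2})$.

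The main obstacle I anticipate is making the error $\sum_d |R_d|$ come out to exactly $T^{n-1/2}$ rather than $T^{n-1/2+\epsilon}$; this is handled by shrinking $D$ by a polylogarithmic factor below $T^{1/4}$, which is harmless because we only need a main term of size $|\calR|/\log T$. A subtler technical point is the uniformity of the per-class lattice count $|\calR|/d^n+O((T/d)^{n-1})$ in $d$ and across all convex $\calR\subset B(T)$, which follows from standard Davenport-type lattice point bounds for convex bodies; all implied constants depend only on $F$, through the singular series and the Lang-Weil constants.
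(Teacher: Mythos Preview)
Your approach is essentially the same as the paper's: both apply an upper-bound sieve to the conditions $p\mid F(\bx)$ for $p\le z$, control the remainder via lattice-point counts in the convex body $\calR$, invoke Lang--Weil for the density $\omega(p)/p^n\sim 1/p$, and handle the small primes $|F(\bx)|\le z$ separately by slicing. The paper uses the cruder per-class remainder $O(T^{n-1})$ (citing \emph{Opera de Cribro}, Lemma~6.8) and a correspondingly tiny level $z=T^{1/(10n)}$, whereas you use the sharper $O((T/d)^{n-1})$ with Selberg at level $D\sim T^{1/4}$; both routes arrive at the same error $T^{n-1/2}$.

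Two small corrections: the claim $\omega(d)/d^{n-1}\ll_F 1$ on squarefree $d$ is not literally true---one only has $\omega(d)/d^{n-1}\le(\deg F)^{\nu(d)}$ where $\nu(d)$ is the number of prime factors---but this is harmlessly absorbed into your $(\log T)^{O(1)}$. Also, the straightforward slicing argument for $\{\bx\in B(T):|F(\bx)|\le z\}$ gives $O(zT^{n-1})$ rather than $O(T^{n-1}z^{1/\deg F})$; this is still negligible for your choice $z=T^\delta$ with $\delta$ small.
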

\begin{proof}
    Let $z= T^{\frac{1}{10n}}$ and we write $$P(z) = \prod_{\substack{p\in \scrP\\ p\leq z}} p.$$ We can bound the number of prime values of $F$ in $\calR$ using the $z$-rough numbers as follows: $$\#\{\bx\in \calR(\Z):F(\bx)\in \scrP\} \leq \#\{\bx\in \calR(\Z): F(\bx) \in \scrP\cap [1,z]\} + \#\{\bx\in \calR(\Z): \gcd(F(\bx),P(z))=1\},$$

First we apply Lemma 6.8 of Opera de Cribro \cite{OperaDeCribro} to the count of $z$-rough numbers. We then know that for a fixed constant $C>0$, the following inequality holds: 
\begin{multline}\label{eq: sieve application}
    \#\{\bx\in \calR(\Z):\gcd(F(\bx),P(z))=1\}\leq C |\calR| \prod_{p\leq z} \left(1-\frac{\#\{\bx\in \F_p^n: F(\bx)\equiv 0 \bmod p\}}{p^{n}}\right) \\ + O\left(\sum_{d\leq z}\mu^2(d) |r_d(\calR)|\right),
\end{multline}
where the $r_d(\calR)$ is the remainder term: $$r_d(\calR) = \#\{\bx\in \calR(\Z): F(\bx)\equiv 0 \bmod d\} - |\calR|\cdot \frac{\#\{\bx\in (\Z/d\Z)^n: F(\bx)\equiv 0 \bmod d\}}{d^n}.$$ 

First, we consider the product over primes $p\leq z$: 
\begin{align*}
    \prod_{p\leq z} \left(1-\frac{\#\{\bx\in \F_p^n: F(\bx)\equiv 0 \bmod p\}}{p^{n}}\right) &\ll_F \exp(-\sum_{p\leq z} \frac{\#\{\bx\in \F_p^n:F(\bx)\equiv 0 \bmod p\}}{p^n} + O(p^{-2}))\\
    &\ll_F \exp(-\sum_{p\leq z} p^{-1} + O(p^{-3/2})) \\
    &\ll_F \log(z)^{-1}.
\end{align*}
In the first inequality, we use that $$\varrho(p) := \#\{\bx\in \F_p^n: F(\bx)\equiv 0 \bmod p\} \leq dp^{n-1},$$
so we know that $$\log(1-\frac{\varrho(p)}{p^n}) = -\frac{\varrho(p)}{p^n} + O(p^{-2}).$$
For the second inequality, we use that since $F$ is absolutely irreducible, we have an even better estimate for $\#\{\bx\in \F_p^n: F(\bx)\equiv 0 \bmod p\}$ for $p$ sufficiently large in terms of $F$:
$$\#\{\bx\in \F_p^n: F(\bx)\equiv 0 \bmod p\} = p^{n-1} + O(p^{n-3/2}).$$
Moreover, for any $\alpha>1$, we know that $$\sum_{p\leq z} p^{-\alpha} = O(1),$$
whereas for $\alpha=1$, we have $$\sum_{p\leq z} p^{-1} = \log\log(z) + O(1).$$
Together these give the final inequality. 

Next, we need a nontrivial level of distribution result: $$\sum_{d\leq T^{1/10n}}\mu^2(d) |r_d(\calR)| \ll_F T^{n-1/2}.$$
For a fixed $d$, we can write 
    \begin{equation}\label{eq: remainder}r_d(\calR) = \sum_{\substack{\alpha\bmod d\\ F(\alpha)\equiv 0 \bmod d}} \left(\sum_{\substack{\bx\in \calR(\Z) \\ v\equiv \alpha\bmod d}}1 - \frac{|\calR|}{d^{n}}\right).\end{equation}

We estimate the internal sum using geometry of numbers. We remark that if we define $$\Lambda(\alpha,d) := \{\bx\in \Z^n: \bx \equiv \alpha\bmod d\},$$
then $\Lambda(\alpha,d)$ is a lattice of determinant $d^{n}.$ Now, since $\calR\subset B(T)$, we know that the longest ``sidelength'' of the region is $\ll T$. Moreover, it is a convex region and thus, we get that $$\#\{\bx\in \calR(\Z)\cap \Lambda(\alpha,d)\} = \frac{|\calR|}{d^{n}} + O\left(\max_{1\leq k\leq n} \frac{T^{n-k}}{\lambda_1\hdots \lambda_{n-k}}\right),$$
where $\lambda_1,...,\lambda_{n}$ are the successive minima of the lattice $\Lambda(\alpha,d).$ Since $\lambda_i\geq 1$ for each $i$, we use the trivial bound that $$\#\{v\in \calR(\Z)\cap \Lambda(\alpha,d)\} - \frac{|\calR|}{d^{n}} \ll T^{n-1}.$$

Thus, we have that (\ref{eq: remainder}) is bounded by $$|r_d(\calR)| \leq  \sum_{\substack{\alpha\bmod d\\ F(\alpha)\equiv 0 \bmod d}} T^{n-1}\ll T^{n-1} d^{n-1}.$$
Here we have used that $F(v)\equiv 0 \bmod d$ cuts out a subvariety of codimension $1$. So, we can calculate that $$\sum_{d\leq T^{1/10n}}\mu^2(d) |r_d(\calR)| \ll_F T^{n-9/10}\leq T^{n-1/2}.$$
Plugging the above bound into (\ref{eq: sieve application}), we achieve:
\begin{equation}\label{eq: result of sieve}\#\{\bx\in \calR(\Z): \gcd(F(\bx),P(z))=1\} \ll_F \frac{|\calR|}{\log(T)} + T^{n-1/2}.\end{equation}

Finally, we need to bound the contribution from the small primes less than $z$: 
\begin{align*}\#\{\bx\in \calR(\Z): F(\bx)\in \scrP\cap [1,T^{1/10n}]\} &\leq \sum_{m=1}^{T^{1/10n}} \#\{\bx\in \calR(\Z): F(\bx) = m\} \\ &\leq \sum_{m=1}^{T^{1/10n}} \#\{\bx\in B(T)(\Z): F(\bx)=m\}.\end{align*}
Let us write $\bx=(x_1,...,x_n)$ where we assume that $F(\bx) = G(x_1,...,x_{n-1},x_n)x_n + H(x_1,...,x_{n-1})$ for $G(x_1,...,x_{n-1},x_n)$ nontrivial. Then we can write the sum above as:
$$\leq \sum_{|x_1|\leq T}\hdots \sum_{|x_{n-1}|\leq T} \sum_{m=1}^{T^{1/10n}} \#\{|x_n|\leq T: F(x_1,...,x_{n-1},x_n) = m\}.$$
Now if $F(x_1,...,x_{n-1},x_n)$ as a polynomial in $x_n$ has degree $d \geq 1$, we have that $$\#\{|x_n|\leq T: F(x_1,...,x_n)=m\}\leq d$$
for any value of $d$. So, these values of $x_1,...,x_{n-1}$ will contribute to an upper bound of $T^{n-1+1/10n}$ values.

On the other hand, if $F(x_1,...,x_{n-1},x_n)=c$ is a constant polynomial in $x_n$, for $m=c$, we get that $$\#\{|x_n|\leq T: F(x_1,...,x_{n-1},x_n)=c\}\leq T$$
and otherwise $$\#\{|x_n|\leq T: F(x_1,...,x_{n-1},x_n)=m\}=0.$$
However, for $F(x_1,...,x_{n-1},x_n)$ to be a constant polynomial, $x_1,...,x_{n-1}$ must satisfy $\tilde{G}(x_1,...,x_{n-1})=0$ where $\tilde{G}(x_1,...,x_{n-1})$ is the coefficient of any positive power of $x_n$ in $F(x_1,...,x_{n-1},x_n)$. We know that $$\#\{|x_i|\leq T: \tilde{G}(x_1,...,x_{n-1}) = 0\} \ll_{\tilde{G}} T^{n-2},$$
since $\tilde{G}$ cuts out a codimension one variety. So, combining this with the above bound, we get that $$\#\{\bx\in \calR(\Z): F(\bx)\in \scrP\cap [1,T^{1/10n}]\} \leq \sum_{m=1}^{T^{1/10n}} \#\{\bx\in B(T)(\Z):F(\bx)=m\} \ll_{F} T^{n-1+1/10n}.$$
Combining the above bound with (\ref{eq: result of sieve}), we achieve final upper bound.

\end{proof}

We remark that for $F$ a homogeneous polynomial, this level of distribution result can be improved by studying the level of distribution of lattices, as done by Daniel in \cite{Daniel}, or by using bounds on exponential sums, as done by Friedlander and Iwaniec in Chapter 22 of \cite{OperaDeCribro}.

\subsection{Proof of Theorem \ref{thm: cones to boxes}}
From Proposition \ref{prop: existence of epsilon cutting}, we know that we have an $\epsilon$-cutting of $\calR_0$. Thus if we define $$\pi_{V,F}(T\calR_0):= \#\{v\in V(\Z)\cap T\calR_0: F(v) \textrm{ is prime}\},$$
then we have that for every $T>1$: $$\pi_{V,F}(T\calR_0) = \sum_{\Omega\in \calC_\epsilon} \pi_{V,F}(T/\height(\Omega)\Omega)) + \#\{v\in V(\Z)\cap T
\calE: F(v) \textrm{ is prime}\}.$$

First we handle the error term with a sieve argument. We note that in the construction of the $\epsilon-$cutting from Proposition \ref{prop: existence of epsilon cutting}, $\calE$ will be the finite union of convex regions contained in $\calR_0$.  Then applying Proposition \ref{prop: upper bound} on each of the convex regions, we get that there is a constant $C_{F,V}\geq 0$ such that $$\#\{v\in V(\Z)\cap T\calE: F(v)\text{ is prime}\} \leq C_{F,V} \left(\frac{|T\calE|}{\log(T)} + T^{\dim(V)-1/2} \right) \leq C_{F,V}\frac{ \epsilon T^{\dim(V)}}{\log(T)}.$$

Now, we handle our main term. Under our assumptions, we have that for $\Omega\subset V_1(\R)$ a nice compact subset, $$\pi_{V,F}(T/\height(\Omega)\Omega) = (1+o_\Omega(1))\sing_{V,F} \int_{T/\height(\Omega)\Omega} \frac{1}{\log^+(F(x))}dx.$$
Thus, our main term becomes $$\sum_{\Omega\in \calC_\epsilon} \pi_{V,F}(T/\height(\Omega)\Omega) = (1+o_\epsilon(1))\sing_{V,F} \int_{\cup_{\calC_\epsilon} T/\height(\Omega)\Omega} \frac{1}{\log^+(F(x))}dx.$$ 
By taking $\epsilon\rightarrow 0$, we obtain $$\pi_{V,F}(T\calR_0) = (1+o(1))\sing_{V,F} \int_{T\calR_0} \frac{1}{\log^+(F(x))}dx.$$
By the same argument, if the second assumption holds in Theorem \ref{thm: cones to boxes}, then by using an $\epsilon$-cutting of $\calR_0$ with nice compact subsets $\Omega\subset V_{-1}(\R)$, the sum over negative primes achieves the final statement of the theorem. \qed

\subsection{Proof of Theorem \ref{thm: PNT det Mat} and Theorem \ref{thm: PNT Pff Skew}}\label{sec: PNT mat,skew}
In this section, we combine Theorem \ref{PNT cones Hardy-Littlewood} and Theorem \ref{thm: PNT Pfaffian cones} with Theorem \ref{thm: cones to boxes} to achieve Theorem \ref{thm: PNT det Mat} and Theorem \ref{thm: PNT Pff Skew}. 

\begin{proof}[Proof of Theorem \ref{thm: PNT det Mat}]
    First, we note that Theorem \ref{PNT cones Hardy-Littlewood} gives us the first assumption of Theorem \ref{thm: cones to boxes}. Next, we remark that the box $\{\|X\|\leq T\}\subset \Mat_n(\R)$ satisfies the conditions on $\calR_0$ set by Theorem \ref{thm: cones to boxes}. Hence, we get the following result: 
    $$\pi_{\det}(T) = (1+o(1)) \cdot \prod_{j=2}^n \zeta(j)^{-1} \int_{\|X\|\leq T} \frac{1}{\log^+(\det(X))}dX.$$
\end{proof}

\begin{proof}[Proof of Theorem \ref{thm: PNT Pff Skew}]
    First, we note that Theorem \ref{thm: PNT Pfaffian cones} gives us the first assumption of Theorem \ref{thm: cones to boxes}. Next, we remark that the box $\{\|X\|\leq T\}\subset \Skew_{2n}(\R)$ satisfies the conditions on $\calR_0$ set by Theorem \ref{thm: cones to boxes}. Hence, we get the following result: 
    $$\pi_{\Pff}(T) = (1+o(1))\cdot \prod_{\substack{3\leq n\leq 2n-1\\ j\text{ odd}}} \zeta(j)^{-1} \cdot \int_{\substack{\|X\|\leq T\\ X^T = -X}} \frac{1}{\log^+(\Pff(X))}dX.$$
\end{proof}

\section{Background on symmetric matrices and the Siegel Mass Formula}\label{sec: Siegel}
Finally, we handle the more involved example of the determinant polynomial on symmetric matrices. Let us first discuss how this case does and does not fit into the general framework outlined in \S\ref{sec: general framework}. 

Define $G=SL_n(\R)$ and $V$ to be the space of symmetric matrices:
$$V = \{X\in \Mat_n: X^T=X\}.$$
We will take $F$ to be the determinant polynomial on $V$. Then we have that $g\in G= \SL_n$ acts on $V_1 = \{X\in \Mat_n: X^T=X, \det(X) = 1\}$ via $$X \mapsto g^T X g.$$
We can then see that $V_1(\R)$ is the finite union of $n+1$ orbits, where the orbits are parameterized by the signatures of the symmetric matrices. For $p+q=n$, let $\calO_{p,q}$ be the orbit of matrices with signature $(p,q).$
In this case, the conditions to apply Theorem 1.9 of Oh \cite{Oh} are not always satisfied -- in particular for the orbit of positive (or negative) definite symmetric matrices, the stabilizer of the generator $\text{diag}(1,1,...,1)$ is $\SO_n(\R)$ and thus compact. To resolve this issue, we use the Linnik equidistribution result of Einsiedler, Margulis, Mohammadi, and Venkatesh in \cite{EMMV} to get asymptotics for $$\{X\in\GL_n(\Z)\cap m^{\frac{1}{n}}\Omega:\text{ $X$ is symmetric positive definite}\},$$ for $\Omega$ some nice compact subset of $\SL_n(\R).$ To do so, we use the Siegel mass formula to evaluate the corresponding class numbers. Moreover, by evaluating the local factors of the Siegel mass formula we calculate the singular series. \\

Before proving the prime number theorem for the symmetric case, it is prudent to review some important facts about the Siegel mass formula. Recall that two integral positive definite symmetric matrices are in the same genus if they are equivalent over $\mathbb{Z}_p$ for all $p$ and over $\R$. There are finitely many equivalence classes in each genus. For a positive definite symmetric matrix $S$ of genus $\calG$, the mass $M(S)$ of $S$ is defined as $$M(S) := \sum_{S_k\in \calG} \frac{1}{\left\vert \textrm{Aut}(S_k)\right\vert},$$
where the sum is over representatives $S_k$ in the genus $\calG$ of $S$ and $\textrm{Aut}(S_k)$ is the automorphism group of $S_k$ in $\GL_n(\Z)$. We note that $M(S)$ only depends on the genus of $S$, and could be denoted as $M(\calG)$. Siegel's celebrated mass formula evaluates this quantity as a product of local densities. 
\begin{thm}[Siegel \cite{Siegel}]\label{thm: Siegel Mass formula}
  Let $n\geq 3$ and $S$ be a positive definite symmetric $n\times n$ matrix of determinant $D$. Then we have the following evaluation of $M(S):$
  $$M(S)= 2 \pi^{-n(n+1)/4} \prod_{k=1}^n \Gamma(k/2)  \prod_{p} \alpha_p(S)^{-1} \cdot D^{(n+1)/2},$$
  where $\alpha_p$ is the $p$-adic density at the prime $p$ defined as $$\alpha_p(S) := \frac{1}{2}\lim_{\ell \rightarrow\infty}p^{-\ell n(n-1)/2 }\#\{T\in \Mat_n(\Z/p^{\ell}\Z): T^T S T \equiv S \bmod p^{\ell}\}.$$
\end{thm}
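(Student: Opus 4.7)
The plan is to approach this via the adelic/Tamagawa number formalism applied to the special orthogonal group of the form. Let $G = \SO(S)$, viewed as an algebraic group over $\Q$. The foundational input would be Weil's theorem that the Tamagawa number of (the simply connected cover of) the special orthogonal group equals $2$, i.e.\ $\vol_\tau(G(\Q)\backslash G(\A)) = 2$ with respect to the canonical Tamagawa measure built from a gauge form on $G$.

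I would then unpack this identity by decomposing $G(\A) = G(\R)\times G(\A_f)$ and using the fact that the double coset space $G(\Q)\backslash G(\A)/K_f$, with $K_f = G(\widehat{\Z})$, is finite and in natural bijection with the genus $\calG$ of $S$; moreover the stabilizer of the class of $S_k$ in this decomposition is precisely $\textrm{Aut}(S_k)$. This gives
$$2 \;=\; \vol_\tau(G(\Q)\backslash G(\A)) \;=\; \vol(G(\R))\cdot \prod_p \vol(K_p)\cdot \sum_{S_k\in\calG} \frac{1}{|\textrm{Aut}(S_k)|} \;=\; \vol(G(\R))\cdot \prod_p \vol(K_p) \cdot M(S),$$
so $M(S)$ is recovered as $2$ divided by an explicit product of local volumes, and the remaining task is to identify each of these local factors.

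The next step would be to evaluate each local volume in the right normalization. At a finite prime $p$, I would apply the standard change of variables to the orbit map $T\mapsto T^T S T$ from $\SL_n$ onto the $G$-orbit of $S$ in the space of symmetric matrices; this allows $\vol(K_p)$ to be rewritten in terms of the local orbit-counting density, which is exactly $\alpha_p(S)$ times a Jacobian factor of $|D|_p^{(n+1)/2}$ (reflecting the determinant of the derivative of the orbit map). Taking the product over $p$ and invoking the product formula $\prod_p |D|_p = D^{-1}$ produces the $D^{(n+1)/2}$ in the final answer. At the archimedean place, $G(\R) = \SO(S)(\R)\cong \SO(n)$ is compact for positive definite $S$, and its volume with respect to the same form-dependent gauge is a classical calculation yielding the factor $\pi^{n(n+1)/4}/\prod_{k=1}^n \Gamma(k/2)$, again up to a compensating power of $D$.

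The main obstacle is two-fold. First, invoking $\tau(\SO_n) = 2$ is itself a deep input, traditionally proved by the Siegel--Weil machinery or by induction on the rank using the fibration by a smaller orthogonal group. Second, the careful bookkeeping of normalizations — relating the Tamagawa measure at each finite place to the counting measure defining $\alpha_p(S)$, and relating the archimedean gauge form to Euclidean measure on $\R^{n(n+1)/2}$ — is what produces each of the explicit constants $2$, $\pi^{-n(n+1)/4}$, $\prod\Gamma(k/2)$, and $D^{(n+1)/2}$, and checking that every convergence factor cancels correctly is genuinely delicate. Once both points are addressed, the stated mass formula falls out by rearranging the adelic identity above and solving for $M(S)$.
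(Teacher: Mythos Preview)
The paper does not prove this theorem at all: it is stated as background, attributed to Siegel \cite{Siegel}, and used as a black box in \S\ref{sec: Siegel}--\S\ref{sec: sym}. There is therefore no ``paper's own proof'' to compare your proposal against.

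Your outline is the standard modern (Weil/Tamagawa) approach and is a legitimate route to the mass formula, distinct from Siegel's original analytic proof via theta series. The structure you describe --- identifying the genus with the double coset space $G(\Q)\backslash G(\A_f)/G(\widehat{\Z})$, reading $M(S)$ off from $\tau(\SO_n)=2$, and unpacking the local volumes --- is correct in spirit. Two remarks: first, the Tamagawa number statement you want is $\tau(\SO_n)=2$ directly (for $n\geq 3$), and as you acknowledge this is itself a deep theorem essentially equivalent to what you are trying to prove, so your argument is more a reformulation than an independent proof unless you also supply a proof of $\tau(\SO_n)=2$. Second, the passage from the local gauge-form volume of $G(\Z_p)$ to the density $\alpha_p(S)$ is exactly the step where the factor of $\tfrac12$ in the definition of $\alpha_p$ and the determinant power $|D|_p^{(n+1)/2}$ must be tracked carefully; your sketch waves at this but does not pin down the computation. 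For the purposes of this paper none of this matters, since the result is simply quoted.
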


For all but finitely many primes $p$ in the Siegel mass formula (for $p\nmid 2D$), the formula for $\alpha_p(S)$ has the following clean expression:
\begin{equation}\label{eq: standard p mass}
\alpha_p^{\std}(S)=\begin{cases}\prod_{k=1}^{(n-1)/2} (1-p^{-2k}), & n\text{ odd},\\ \prod_{i=1}^{n/2-1} (1-p^{-2k})\cdot (1-p^{-n/2}\legendre{(-1)^{n/2}D}{p}), & n\text{ even.} \end{cases}
\end{equation}
We will call this the \textbf{standard $p$-density}. Here the Legendre symbol $\legendre{(-1)^{n/2}D}{p}$ is interpreted as $0$ if $p\mid 2D.$ 

Evaluating $\alpha_p(S)$ at the nonstandard primes can be tricky; calculating the local factors of the Siegel mass formula for $p=2$ is a notorious problem. Fortunately, we will only need the case when the determinant $D$ is a prime and hence the only \textit{small} divisor we have to deal with is $2$. We note that both Kitaoka \cite{KY} and Kovaleva \cite{Kovaleva} evaluated these densities at odd primes; additionally, Kitaoka evaluates $\alpha_p(S)$ for all primes when $n$ is odd.

\subsection{The density at nonstandard primes $p\neq 2$}
Let $p$ be a nonstandard prime, we want to look at the $p$-adic density $\alpha_p(S)$. Let us assume that over $\Q_p$, $S$ has the Jordan decomposition $$S = \sum_{k=-\infty}^\infty p^{k} f_k,$$
where $f_k$ is the Jordan block with determinant coprime to $p$. We note that $f_k = 0$ for all but finitely many values of $k$. Let us denote the determinant of $f_k$ as $\Delta_k$ and the dimension of the Jordan block as $n_k$. Then for $p\neq 2$, the formula for the $p$-adic density is given by Conway and Sloane in \cite{ConwaySloane} as the following: 

\begin{equation}\label{eq: p-adic density formula}
    \alpha_p(S)^{-1} = 2 p^{-\frac{s(n+1)}{2}} \prod_{k} M_p(f_k) \prod_{k'>k} p^{\frac{(k'-k)n_k n_{k'}}{2}}
\end{equation}
where $p^s \| D$, the products over $k$ and $k'$ are over the indices such that $f_k\neq 0$, and $$M_p(f_k) = \begin{cases}
    \frac{1}{2} \prod_{i=1}^{\frac{n-1}{2}} (1-p^{-2i})^{-1}, & n\text{ is odd}, \\ 
    \frac{1}{2} (1- \legendre{(-1)^{n/2}\Delta_k}{p} p^{-n/2})^{-1} \cdot \prod_{i=1}^{\frac{n-2}{2}}(1-p^{-2i})^{-1}, & n\text{ is even.} 
\end{cases}$$

Notice that this formula still holds for $p$ a standard prime and hence are definitions are consistent.

\begin{lemma}\label{lem: p-adic density at p}
    Let $p$ be an odd prime and $n\geq 3$, then for $S$ a positive definite symmetric matrix of determinant $p$, we have that $$\alpha_p(S)^{-1} = \frac{p^{-1}}{2} \cdot \alpha_p^{\std}(S)^{-1} \cdot \left(1+O(p^{-2})\right).$$
\end{lemma}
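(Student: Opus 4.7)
The plan is to apply the Conway--Sloane formula (\ref{eq: p-adic density formula}) directly, after first pinning down the $\Z_p$-Jordan decomposition of $S$. Because $p$ is odd and $\det(S) = p$, the $p$-adic valuations of the elementary divisors of $S$ sum to exactly $1$, which forces $S \sim f_0 \oplus pf_1$ over $\Z_p$, where $f_0$ has dimension $n-1$ with $p$-adic unit determinant and $f_1$ is one-dimensional with unit determinant; any other Jordan structure would be incompatible with $v_p(\det S) = 1$. This gives $s = 1$, $n_0 = n-1$, $n_1 = 1$, and a single surviving cross-term $(k,k') = (0,1)$ in (\ref{eq: p-adic density formula}). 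Collecting powers of $p$, the prefactor becomes $2\,p^{-(n+1)/2}\cdot p^{(n-1)/2} = 2\,p^{-1}$, so
\[
\alpha_p(S)^{-1} = 2\,p^{-1}\,M_p(f_0)\,M_p(f_1).
\]

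I would next evaluate the two $M_p$-factors by case analysis on the parity of $n$. Since $f_1$ has odd dimension $1$, the product defining $M_p(f_1)$ is empty, giving $M_p(f_1) = \tfrac{1}{2}$. For $M_p(f_0)$ when $n$ is even, the dimension $n_0 = n-1$ is odd, so
\[
M_p(f_0) = \tfrac{1}{2}\prod_{i=1}^{(n-2)/2}(1-p^{-2i})^{-1}.
\]
Comparing with $\alpha_p^{\std}(S)$ from (\ref{eq: standard p mass}), whose Legendre-symbol factor vanishes because $p \mid D$, one obtains $\alpha_p^{\std}(S) = \prod_{i=1}^{n/2-1}(1-p^{-2i})$, and exact equality $\alpha_p(S)^{-1} = \tfrac{p^{-1}}{2}\alpha_p^{\std}(S)^{-1}$ follows with no error. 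When $n$ is odd, $n_0 = n-1$ is even and an extra Legendre-type factor appears:
\[
M_p(f_0) = \tfrac{1}{2}\bigl(1 - \chi\,p^{-(n-1)/2}\bigr)^{-1}\prod_{i=1}^{(n-3)/2}(1-p^{-2i})^{-1}, \quad \chi = \legendre{(-1)^{(n-1)/2}\det(f_0)}{p} \in \{\pm 1\}.
\]
Dividing by $\alpha_p^{\std}(S)^{-1} = \prod_{k=1}^{(n-1)/2}(1-p^{-2k})^{-1}$ and using $\chi^2 = 1$ to factor $1 - p^{-(n-1)} = (1-\chi p^{-(n-1)/2})(1+\chi p^{-(n-1)/2})$, the ratio collapses to
\[
\frac{\alpha_p(S)^{-1}}{(p^{-1}/2)\,\alpha_p^{\std}(S)^{-1}} = \frac{1 - p^{-(n-1)}}{1 - \chi\,p^{-(n-1)/2}} = 1 + \chi\,p^{-(n-1)/2}.
\]

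The main obstacle is the $n$-odd case: the algebraic simplification yields an error of size $p^{-(n-1)/2}$, which matches the claimed $O(p^{-2})$ only once $n \geq 5$. The case $n = 3$ is the subtle one, where the same identity directly gives only $O(p^{-1})$; here I would expect to need an additional input beyond the Conway--Sloane bookkeeping, namely using that $S$ is positive definite together with the product formula for Hilbert (or Hasse--Witt) invariants to constrain $\chi$ at the single ramified prime $p$ in a way that either pins down the sign or allows the extra term to be absorbed into the comparison. Everything apart from this ternary wrinkle is a mechanical case-by-case evaluation of Conway--Sloane once the two-block Jordan decomposition is in hand.
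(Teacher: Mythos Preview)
Your approach is exactly the paper's: identify the two-block Jordan decomposition $f_0\oplus pf_1$ with $n_0=n-1$, $n_1=1$, plug into the Conway--Sloane formula (\ref{eq: p-adic density formula}) to get $\alpha_p(S)^{-1}=2p^{-1}M_p(f_0)M_p(f_1)$, and evaluate case by case. The paper's proof compresses the evaluation of $M_p(f_0)M_p(f_1)$ into a single sentence; you have carried it out explicitly, and your computations in both parities are correct.

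Your observation about $n=3$ is also correct, and it is not a gap in your argument but a minor inaccuracy in the stated error term. For odd $n$ the ratio is exactly $1+\chi\,p^{-(n-1)/2}$ with $\chi=\pm 1$, which is $O(p^{-2})$ only for $n\ge 5$; at $n=3$ one genuinely gets $1\pm p^{-1}$. Your hoped-for rescue via positive-definiteness and the product formula does not work: both signs of $\chi=\legendre{-\Delta_0}{p}$ occur among positive definite ternary integral forms of determinant $p$ (these correspond to the two $\SL_3(\Z_p)$-orbits inside $V_p(\Z_p)$, exactly as exploited later in Proposition~\ref{prop: evaluation of the constant symmetric}). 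So the correct uniform statement for $n\ge 3$ is $\alpha_p(S)^{-1}=\tfrac{p^{-1}}{2}\alpha_p^{\std}(S)^{-1}(1+O(p^{-1}))$. This is harmless for the paper: the only consumers of the lemma are Propositions~\ref{prop: evaluation of the constant symmetric} and~\ref{prop: class number}, which are stated with $(1+O(q^{-1}))$ and $(1+o(1))$ respectively.
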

\begin{proof}
    Since $p$ is an odd prime, we have that the only nontrivial Jordan blocks are $f_0$ and $f_1$. Moreover, $f_0$ has dimension $n_0 = n-1$ and $f_1$ has dimension $n_1 = 1$. Thus, we get that $$\alpha_p(S)^{-1} = 2p^{-1} M_p(f_0)M_p(f_1).$$
    Evaluating $M_p(f_0)$ and $M_p(f_1)$, we get that $$\alpha_p(S)^{-1} = \frac{p^{-1}}{2} \cdot \alpha_p^{\std}(S)^{-1} \cdot \left(1+O(p^{-2})\right).$$
\end{proof}

\subsection{The $2$-adic density}
For the $2$-adic density, there is a more complicated formula than (\ref{eq: p-adic density formula}) that takes into account the specifics of the Jordan decomposition. 
\begin{defn}
    A Jordan block $f_k$ is said to be \textbf{Type I} if $f_k$ represents an odd number in $\Z_2$. In this case, $f_k$ can be taken to be diagonal. Otherwise, $f_k$ is said to be \textbf{Type II} and will be a direct sum of matrices of the form $$\left\{\begin{pmatrix}
        2w& u\\ u&2v
    \end{pmatrix}: w,u,v\in \Z_2\right\}. $$ We note that Type II Jordan blocks can only occur if $n$ is even. 
\end{defn}
In \cite{ConwaySloane}, Conway and Sloane introduce the notion of love forms, which are $0$-dimensional Jordan blocks that still contribute to the formula. 

\begin{defn}
    A Jordan block $f_k$ with $n_k=0$ is called a \textbf{love form}. These forms are considered Type II forms.  
\end{defn}
\begin{remark}
    We note that in some sense love forms are nonexistent, as they are zero-dimensional Jordan blocks. Nonetheless, they are a useful trick and convention to simplify the exposition around the Siegel mass formula at $p=2$.
\end{remark}

If $f_k$ is a love form and $f_{k+1}$ or $f_{k-1}$ is a Type I Jordan block, then there is a contribution of $M_2(f_k)=\frac{1}{2}$; such a form $f_k$ is called a \textbf{bound love form}. Otherwise, $f_k$ is called a \textbf{free love form}, and $M_2(f_k) = 1$. With this context, we can introduce the modification of (\ref{eq: p-adic density formula}) at the place $p=2$: 
\begin{equation}\label{eq: 2-adic density}
    \alpha_p(S)^{-1} = 2 \cdot 2^{-\frac{s(n+1)}{2}} \prod_{k} M_2(f_k) \prod_{k'>k} 2^{\frac{(k'-k)n_k n_{k'}}{2}} \times 2^{n(I,I)-n(II)},
\end{equation}
where $2^s \| D$, $n(I,I)$ is the number of adjacent Type I Jordan blocks, and $n(II)$ is the sum of the dimensions of all of the Jordan blocks of Type II. 

In our case, since $D$ is some odd prime, the only relevant forms are $f_0$ and the love forms. If $f_0$ is a nontrivial Jordan block, the value of $M_2(f_0)$ depends on a$\bmod 8$ invariant. 

\begin{defn}
     Let $S$ denote some positive definite integral matrix of discriminant $D$. If $f_0$ is Type I, then it is diagonalizable and we take $E_i$ to be the number of elements on the diagonal that are congruent to $i \bmod 4$. We define the octane $\calO_S$ to be: $$\calO_S := \begin{cases}
        E_1 - E_3, & f_1\text{ is Type I},\\
        4-4\legendre{D}{2}, & f_1\text{ is Type II}.
    \end{cases}$$
The octane is a$\bmod8$ invariant.
\end{defn}

Now the definition of $M_2(f_0)$ is given by the following: 
\begin{equation}\label{eq: M2f1}
    M_2(f_0) = \frac{1}{2}\times\begin{cases}
       \prod_{i=1}^{\frac{n-3}{2}} (1-2^{-2i})^{-1} \cdot (1-2^{\frac{1-n}{2}})^{-1}, & n\text{ is odd, and $\calO_S\equiv \pm 1 \bmod 8$,} \\
       \prod_{i=1}^{\frac{n-3}{2}}(1-2^{-2i})^{-1} \cdot (1+2^{\frac{1-n}{2}})^{-1}, & n\text{ is odd, and $\calO_S\equiv \pm 3 \bmod 8$,}\\
       \prod_{i=1}^{\frac{n-2}{2}}(1-2^{-2i})^{-1}, & n\text{ is even, $f_0$ is Type I, and $\calO_S \equiv \pm 2 \bmod 8$},\\
       \prod_{i=1}^{\frac{n-4}{2}}(1-2^{-2i})^{-1} \cdot (1-2^{1-\frac{n}{2}})^{-1}, & n\text{ is even, $f_0$ is Type I, and $\calO_S\equiv 0 \bmod 8$}, \\
       \prod_{i=1}^{\frac{n-4}{2}}(1-2^{-2i})^{-1} \cdot (1+2^{1-\frac{n}{2}})^{-1}, & n\text{ is even, $f_0$ is Type I, and $\calO_S\equiv 4 \bmod 8$},\\ 
       \prod_{i=1}^{\frac{n-2}{2}}(1-2^{-2i})^{-1}\cdot (1-2^{-\frac{n}{2}})^{-1}, & n\text{ is even, $f_0$ is Type II, and $\calO_S \equiv 0 \bmod 8$}, \\
       \prod_{i=1}^{\frac{n-2}{2}} (1-2^{-2i})^{-1} \cdot (1+2^{-\frac{n}{2}})^{-1}, & n\text{ is even, $f_0$ is Type II, and $\calO_S\equiv 4 \bmod 8$}.
    \end{cases}
\end{equation}

Let us evaluate the $2$-density for when $D$ is an odd prime. 

\begin{lemma}\label{lem: 2-adic density at p}
    Let $D$ be an odd prime and $n\geq 3$. Then for $S$ a positive definite symmetric matrix of determinant $D$, we have that if $n$ is odd, then $$\alpha_2(S)^{-1} =\frac{1}{4} \prod_{i=1}^{\frac{n-3}{2}}(1-2^{-2i})^{-1} \times \begin{cases}
        (1-2^{\frac{1-n}{2}})^{-1}, & \calO_S \equiv \pm 1 \bmod 8, \\
        (1+2^{\frac{1-n}{2}})^{-1}, & \calO_S \equiv \pm 3 \bmod 8.
    \end{cases}$$
    If $n$ is even, then we have that 
    $$\alpha_2(S)^{-1} = \begin{cases}
            \frac{1}{4}\prod_{i=1}^{\frac{n-2}{2}}(1-2^{-2i})^{-1}, & n\text{ is even, $f_0$ is Type I, and $\calO_S \equiv \pm 2 \bmod 8$},\\
       \frac{1}{4}\prod_{i=1}^{\frac{n-4}{2}}(1-2^{-2i})^{-1} \cdot (1-2^{1-\frac{n}{2}})^{-1}, & n\text{ is even, $f_0$ is Type I, and $\calO_S\equiv 0 \bmod 8$}, \\
       \frac{1}{4}\prod_{i=1}^{\frac{n-4}{2}}(1-2^{-2i})^{-1} \cdot (1+2^{1-\frac{n}{2}})^{-1}, & n\text{ is even, $f_0$ is Type I, and $\calO_S\equiv 4 \bmod 8$},\\ 
       2^{-n} \prod_{i=1}^{\frac{n-2}{2}}(1-2^{-2i})^{-1}\cdot (1-2^{-\frac{n}{2}})^{-1}, & n\text{ is even, $f_0$ is Type II, and $\calO_S \equiv 0 \bmod 8$}, \\
       2^{-n}\prod_{i=1}^{\frac{n-2}{2}} (1-2^{-2i})^{-1} \cdot (1+2^{-\frac{n}{2}})^{-1}, & n\text{ is even, $f_0$ is Type II, and $\calO_S\equiv 4 \bmod 8$}.
        \end{cases}$$
\end{lemma}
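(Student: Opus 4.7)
The proof is a direct case-by-case application of formula (\ref{eq: 2-adic density}) once one identifies the $2$-adic Jordan structure of $S$. Since $D$ is an odd prime, the $2$-adic valuation is $s = 0$, so the prefactor $2^{-s(n+1)/2}$ is trivial. Writing $S = \sum_k 2^k f_k$ as its Jordan decomposition over $\Z_2$, the determinant $\det(S) = D$ has no $2$-content; since each $\det(f_k)$ is a $2$-adic unit, the only Jordan block with $n_k \neq 0$ must sit at $k = 0$, so $f_0$ has dimension $n$ and $f_k = 0$ for $k \neq 0$. In particular the cross-term $\prod_{k'>k} 2^{(k'-k) n_k n_{k'}/2}$ equals $1$, and the whole formula collapses to $\alpha_2(S)^{-1} = 2 \cdot \prod_k M_2(f_k) \cdot 2^{n(I,I)-n(II)}$.

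The plan is to split into three cases according to the type of $f_0$. If $n$ is odd, then $f_0$ is necessarily Type I (Type II requires even dimension), and the Conway--Sloane convention requires the inclusion of bound love forms at $k = \pm 1$ adjacent to $f_0$, each contributing $M_2 = \tfrac{1}{2}$. Since $f_0$ is the only Type I block and the love forms are $0$-dimensional Type II, we have $n(I,I) = 0$ and $n(II) = 0$. Formula (\ref{eq: 2-adic density}) then yields
$$\alpha_2(S)^{-1} = 2 \cdot \tfrac{1}{4} M_2(f_0) \cdot 2^{0} = \tfrac{1}{2} M_2(f_0),$$
and substituting the $\calO_S \equiv \pm 1$ or $\pm 3 \pmod{8}$ subcases of (\ref{eq: M2f1}) produces the two formulas in the statement.

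For $n$ even with $f_0$ of Type I, the same reasoning goes through verbatim: bound love forms at $k = \pm 1$ contribute a total factor of $\tfrac14$, so again $\alpha_2(S)^{-1} = \tfrac{1}{2} M_2(f_0)$, and the three subcases $\calO_S \equiv \pm 2, 0, 4 \pmod{8}$ of (\ref{eq: M2f1}) deliver the next three entries. For $n$ even with $f_0$ of Type II there are no adjacent Type I blocks, hence no bound love forms, so $\prod_k M_2(f_k) = M_2(f_0)$, while $n(I,I) = 0$ and $n(II) = n$. This gives
$$\alpha_2(S)^{-1} = 2 \cdot M_2(f_0) \cdot 2^{-n} = 2^{1-n} M_2(f_0),$$
and the two subcases $\calO_S \equiv 0, 4 \pmod{8}$ of (\ref{eq: M2f1}) produce the last two lines.

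The main delicacy, and the step I expect to require the most care, is the love-form bookkeeping at $p = 2$: one has to verify that when $f_0$ is Type I the only love forms contributing nontrivially to the product are the two bound love forms at $k = \pm 1$, while free love forms sitting further away (contributing $M_2 = 1$) can safely be omitted. Once this convention is pinned down exactly as in Conway--Sloane, the remainder of the proof is a routine substitution of (\ref{eq: M2f1}).
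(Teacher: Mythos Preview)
Your proof is correct and follows essentially the same approach as the paper: both arguments observe that since $D$ is odd only the Jordan block $f_0$ is nontrivial, account for the (bound or free) love forms at $k=\pm 1$, compute $2^{n(I,I)-n(II)}$ according to the type of $f_0$, and then substitute the appropriate case of (\ref{eq: M2f1}). Your write-up is slightly more explicit about why the cross-term and the $2^{-s(n+1)/2}$ prefactor vanish, but the argument is otherwise identical.
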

\begin{proof}
    We will prove this by looking at the various cases. Since $D$ is an odd prime, we know the only nontrivial Jordan block occurs for $f_0$. First, we assume $n$ is odd. Then, we have that 
    \begin{align*}\alpha_2(S)^{-1} &= 2 M_2(f_{-1})M_2(f_0)M_2(f_1) \\
    &= \frac{1}{4} \prod_{i=1}^{\frac{n-3}{2}}(1-2^{-2i})^{-1} \times \begin{cases}
        (1-2^{\frac{1-n}{2}})^{-1}, & \calO_S \equiv \pm 1 \bmod 8, \\
        (1+2^{\frac{1-n}{2}})^{-1}, & \calO_S \equiv \pm 3 \bmod 8.
    \end{cases}\end{align*}
    Here we have inputted the contributions from the love forms $f_{-1}$ and $f_{1}$. 
    If $n$ is even, we have that 
    \begin{align*}
        \alpha_2(S)^{-1} &= 2M_2(f_{-1})M_2(f_0)M_2(f_1) \cdot 2^{n(I,I)-n(II)}\\
        &= \begin{cases}
            \frac{1}{4}\prod_{i=1}^{\frac{n-2}{2}}(1-2^{-2i})^{-1}, & n\text{ is even, $f_0$ is Type I, and $\calO_S \equiv \pm 2 \bmod 8$},\\
       \frac{1}{4}\prod_{i=1}^{\frac{n-4}{2}}(1-2^{-2i})^{-1} \cdot (1-2^{1-\frac{n}{2}})^{-1}, & n\text{ is even, $f_0$ is Type I, and $\calO_S\equiv 0 \bmod 8$}, \\
       \frac{1}{4}\prod_{i=1}^{\frac{n-4}{2}}(1-2^{-2i})^{-1} \cdot (1+2^{1-\frac{n}{2}})^{-1}, & n\text{ is even, $f_0$ is Type I, and $\calO_S\equiv 4 \bmod 8$},\\ 
       2^{-n} \prod_{i=1}^{\frac{n-2}{2}}(1-2^{-2i})^{-1}\cdot (1-2^{-\frac{n}{2}})^{-1}, & n\text{ is even, $f_0$ is Type II, and $\calO_S \equiv 0 \bmod 8$}, \\
       2^{-n}\prod_{i=1}^{\frac{n-2}{2}} (1-2^{-2i})^{-1} \cdot (1+2^{-\frac{n}{2}})^{-1}, & n\text{ is even, $f_0$ is Type II, and $\calO_S\equiv 4 \bmod 8$}.
        \end{cases}
    \end{align*}
    Here we have used that if $f_0$ is Type II, we have that $2^{n(I,I)-n(II)} = 2^{-n}$ and no contribution from $f_{-1}$ or $f_1$ since they are both free love forms. In the case, where $f_0$ is Type I, we have used that $2^{n(I,I)-n(II)} = 1$ and we have a contribution of $\frac{1}{2}$ from each of the two bound love forms $f_{-1}$ and $f_1$.
\end{proof}

\section{Symmetric matrices}\label{sec: sym}
Let $V = \Sym_n = \{X\in \Mat_n:\,X^T = X\}$ be a $\Z$-linear space. Unlike the case of $(\Mat_n,\det)$, the pair ($\Sym_n,\det$) is not a Hardy-Littlewood system; however, along the indefinite orbits it is a ``almost" a Hardy-Littlewood system. We will first treat these orbits.
\subsection{The indefinite orbit}\label{subsec: nondefinite orbit} Let us fix $p,q$ such that $p+q=n$ and $p,q\neq 0.$
For the orbits $\calO_{p,q}$ of symmetric matrices with signature $(p,q)$, we apply the following result of Oh in \cite{Oh}. Here we denote by $\mu_m$ the Tamagawa measure on the variety $V_m(\mathbb{A}):$ 
\begin{thm}[Oh \cite{Oh}, Example 4.3]
Let $n\geq 3$. 
Let $\Omega\subset \calO_{p,q}$ be a nice connected compact subset. Then we have that as $m\rightarrow\infty$, $$N(m,\Omega) = \#\{V_m(\Z) \cap \R^{+}\Omega\} = (1+o_\Omega(1))\int_{m^{\frac{1}{n}}\Omega\times \prod\limits_p V_m(\Z_p)}\delta(x)d\mu_m(x). $$
Here, the density function $\delta:V_m(\A) \rightarrow \R$ is constant on any adelic orbit $\mathcal{O}(\mathbb{A})\subset V_m(\A)$ and satisfies that for $x\in \mathcal{O}(\mathbb{A})$ we have: $$\delta(x)=\begin{cases}
    2, & \mathcal{O}(\mathbb{A}) \text{ contains a $\Q$-rational point}, \\
    0, & \text{otherwise.} \\
\end{cases}.$$
\begin{remark}
    As described in Example 6.3 of \cite{BR}, we can interpret $x\in V_m(\A)$ itself as a quadratic form and thus $$\delta(x) \neq 0 \iff \prod_{p} c_p(x_p) = 1,$$
where $c_p(x_p)$ are the Hasse-Minkowski invariants (see \cite{Cassels}) and the product ranges over both finite and infinite places. 
\end{remark}

\end{thm}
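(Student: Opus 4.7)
The plan is to invoke the general counting framework from \S\ref{sec: general framework} (Theorem 1.9 of Oh), taking $(V, G, H, F) = (\Sym_n, \SL_n, \SO(p,q), \det)$. For signatures $(p,q)$ with $p, q \geq 1$, the stabilizer of a basepoint $v_0 \in \calO_{p,q}$ under the action $X \mapsto g^T X g$ is the special orthogonal group $\SO(p,q)$, which is connected, semisimple, and --- crucially --- \emph{non-compact}. Thus $\calO_{p,q} \cong \SL_n/\SO(p,q)$ and the structural hypotheses required to invoke Eskin--Oh equidistribution are all in place, distinguishing this case from the definite orbits $\calO_{n,0}$ and $\calO_{0,n}$, where $\SO(n)$ is compact and the framework of \S\ref{sec: general framework} fails.

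The first step is to decompose $V_m(\A)$ into its $G(\A)$-orbits, which are classified by the Hasse--Minkowski invariants $(c_p(x_p))_p$ of $x$ viewed as a quadratic form. By the Hasse--Minkowski theorem for rational quadratic forms of rank $n \geq 3$, an adelic orbit $\mathcal{O}(\A) \subset V_m(\A)$ contains a $\Q$-rational point if and only if $\prod_p c_p(x_p) = 1$; adelic orbits failing this criterion contribute no $\Z$-points at all. I would then apply the Eskin--Oh equidistribution result orbit-by-orbit: each rational orbit contributes an asymptotic count proportional to the Tamagawa volume of its restriction to $m^{1/n}\Omega \times \prod_p V_m(\Z_p)$, while non-rational orbits contribute nothing.

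The second step is to reassemble the orbit-wise contributions into a single adelic integral. Each rational orbit contributes a factor matching the restriction of $d\mu_m$ to that orbit, with the normalization constant $\delta = 2$ accounting for the index of $\SO(p,q)$ in the full $\GL_n$-stabilizer $\O(p,q)$; equivalently, the factor records that $g$ and $-g$ determine the same orbit of $v_0$. Setting $\delta = 0$ on non-rational orbits and $\delta = 2$ on rational ones packages the orbit-by-orbit sum as $\int \delta(x)\, d\mu_m(x)$, with the equality of orbit Tamagawa volumes and Oh's local masses guaranteed by the compatibility of measures in the sense of Weil~\cite{We} and the local factor computations of Borovoi--Rudnick~\cite{BR}.

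The main obstacle is the adelic bookkeeping: correctly enumerating the adelic orbits, applying the Hasse--Minkowski criterion to pick out those supporting rational points, and converting the orbit-wise equidistribution asymptotics into a clean adelic integral with the right prefactor. The equidistribution itself is available from Ratner's theorem together with Eskin--Oh; the genuinely delicate part is orbit analysis and the normalization. This clean orbit picture breaks down completely for the definite signatures $(n,0)$ and $(0,n)$, which is why the paper must later invoke the Linnik-type equidistribution of \cite{EMMV} together with the Siegel mass formula to handle those cases.
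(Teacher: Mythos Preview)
The paper does not prove this statement: it is quoted verbatim as Example~4.3 of Oh~\cite{Oh} and used as a black box. There is therefore no ``paper's own proof'' to compare against; the paper's only contribution here is the remark (taken from Borovoi--Rudnick~\cite{BR}) identifying the condition $\delta(x)\neq 0$ with the Hasse--Minkowski product formula $\prod_p c_p(x_p)=1$.

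Your sketch is a reasonable outline of \emph{why} Oh's general theorem applies to the indefinite symmetric case, and the basic shape --- noncompact stabilizer, adelic orbit decomposition by Hasse--Minkowski invariants, orbit-by-orbit equidistribution, reassembly into an adelic integral --- is correct. Two points deserve care, though. First, $\SO(p,q)(\R)$ is \emph{not} connected when $p,q\geq 1$ (it has two components), nor is it simply connected; so the literal hypotheses listed in \S\ref{sec: general framework} are not ``all in place'' as stated. This is exactly why the paper cites Oh's specific Example~4.3 rather than invoking the general Theorem~\ref{Oh's Theorem} directly: Oh handles the required modifications in that example. Second, your explanation of the constant $2$ as the index $[\O(p,q):\SO(p,q)]$ is not the right mechanism. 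The factor $2$ is the Tamagawa number $\tau(\SO)=2$; in the Borovoi--Rudnick density formalism it arises from the Galois cohomology of the stabilizer, not from an elementary index count inside $\GL_n$.
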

Restricting to the case of when $m$ is some odd prime $q$, we notice that for any prime $p\neq 2,q,$ there is a unique $\SL_n(\Z_p)-$orbit inside of $V_q(\Z_p).$ Furthermore, in this case for any $x\in V_q(\A)$, we have that $$c_p(x_p)=1.$$ Since the signature of $x_\infty\in \Omega$ is fixed, we have that $$c_\infty(x_\infty) = (-1)^{s(s-1)/2}=: c_\infty(\Omega),$$
where $s$ is the negative index of the signature of $x$. Hence, we can see that $$\delta(x) \neq 0 \iff c_2(x)c_q(x) = c_\infty(\Omega).$$
Recalling that the Tamagawa measure $\mu_m$ can be written as $$\mu_m = \prod_{\substack{p\\ \text{inc. }\infty}} \mu_{m,p},$$ we get the following:
\begin{corollary}\label{cor: N(q,Omega) sym}
Let $q$ be an odd prime. Then as $q\rightarrow\infty$, we have that 
\begin{multline*}N(q,\Omega) = (1+o_\Omega(1)) 2 q^{\frac{n-1}{2}}\mu_{\infty}(\Omega) \cdot \left(\prod_{p\neq 2,q}\mu_{q,p}(V_q(\Z_p))\right)\cdot\left(\sum_{\substack{\mathcal{G}_2,\calG_q\\ c_2(\calG_2)c_q(\calG_q)=c_\infty(\Omega)}}\mu_{q,2}(\calG_2(\Z_2))\mu_{q,q}(\calG_q(\Z_q))\right)\end{multline*} where for some $\SL_n(\Z_p)$-orbit $\calG_p\subset V_q(\Z_p)$, we have $$\mu_{p,q}(\calG_p(\Z_p)) = \lim_{k\rightarrow\infty}\frac{\#\calG_p(\Z/p^k\Z)}{p^{k(\frac{n(n+1)}{2}-1)}}.$$
    The sum over $\calG_2$ (resp. $\calG_q$) is over the orbits of $\SL_n(\Z_2)$ (resp. $\SL_n(\Z_q)$) inside of $V_q(\Z_2)$ (resp. $V_q(\Z_q)$).     
\end{corollary}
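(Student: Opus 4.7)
The plan is to apply the theorem of Oh quoted just above directly at $m=q$ and to factor the resulting adelic integral place by place. The Tamagawa measure decomposes as $d\mu_q = d\mu_{q,\infty}\prod_p d\mu_{q,p}$, the domain of integration $q^{1/n}\Omega\times\prod_p V_q(\Z_p)$ is already in product form, and the density $\delta$ is constant on each adelic $\SL_n$-orbit. Thus the integral collapses to a sum, ranging over the $\SL_n(\Z_v)$-orbits at each place $v$, of the products of the corresponding local measures, weighted by the constant value $\delta \in \{0,2\}$ on the associated adelic orbit.

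At the archimedean place, $\Omega$ is contained in a single $\SL_n(\R)$-orbit $\calO_{p,q'}$, so the infinite Hasse--Minkowski invariant is the constant $c_\infty(\Omega)=(-1)^{s(s-1)/2}$, with $s$ the negative index of the signature. The gauge form $d\mu_{q,\infty}$ is homogeneous of degree $\dim V/\deg F - 1 = (n-1)/2$ under dilation, so $\mu_{q,\infty}(q^{1/n}\Omega) = q^{(n-1)/2}\mu_\infty(\Omega)$, accounting for the explicit infinite factor in the claimed formula.

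For primes $p \notin \{2,q\}$, the determinant $q$ is a $p$-adic unit and $p$ is odd, so the Jordan decomposition of any $x_p \in V_q(\Z_p)$ consists of a single Jordan block of unit determinant; the Hilbert-symbol expression for $c_p$ therefore yields $c_p(x_p)=1$, and there is a unique $\SL_n(\Z_p)$-orbit on $V_q(\Z_p)$. These primes contribute the full local mass $\mu_{q,p}(V_q(\Z_p))$ and impose no constraint on $\delta$. At $p=2$ and $p=q$, by contrast, $V_q(\Z_p)$ splits into finitely many $\SL_n(\Z_p)$-orbits $\calG_p$, on each of which $c_p$ is constant; the global product formula $\prod_v c_v(x)=1$, combined with the rule $\delta(x)=2$ iff $x$ arises from a $\Q$-rational point, then forces nonzero contribution precisely on the pairs $(\calG_2,\calG_q)$ satisfying $c_2(\calG_2)c_q(\calG_q)=c_\infty(\Omega)$. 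The local Tamagawa mass of each such orbit is identified with the stated limit $\mu_{q,p}(\calG_p(\Z_p)) = \lim_k \#\calG_p(\Z/p^k\Z)/p^{k(n(n+1)/2-1)}$ via the co-area presentation of $d\mu_{q,p}$ on the smooth locus of $V_q$.

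The main obstacle is the book-keeping at the bad primes $2$ and $q$: one must verify that $V_q(\Z_p)$ genuinely admits a well-defined finite decomposition into $\SL_n(\Z_p)$-orbits on which $c_p$ is a single invariant, and check that this matches the Jordan and Siegel classification catalogued in \S\ref{sec: Siegel}, in particular at $p=2$ where the octane invariant intervenes. Once these local classifications are in hand, the product formula pins down the correct pairs and the asymptotic drops out of Oh's theorem.
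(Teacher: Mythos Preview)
Your proposal is correct and follows essentially the same route as the paper: apply Oh's theorem at $m=q$, factor the Tamagawa measure as a product over places, observe that for $p\neq 2,q$ one has a unique $\SL_n(\Z_p)$-orbit with $c_p(x_p)=1$, and then use the product formula $\prod_v c_v(x)=1$ together with $\delta\in\{0,2\}$ to reduce the constraint to $c_2(\calG_2)c_q(\calG_q)=c_\infty(\Omega)$. Your homogeneity argument for the archimedean factor $q^{(n-1)/2}\mu_\infty(\Omega)$ and your Jordan-block justification for $c_p=1$ at the standard primes are slightly more explicit than what the paper records, but the underlying argument is identical.
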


\begin{prop}\label{prop: evaluation of the constant symmetric}
For $n\geq 3$ odd, as the prime $q$ goes to $\infty$, we have that $$\left(\prod_{p\neq 2,q}\mu_{q,p}(V_q(\Z_p))\right)\cdot\left(\sum_{\substack{\mathcal{G}_2,\calG_q\\ c_2(\calG_2)c_q(\calG_q)=c_\infty(\Omega)}}\mu_{q,2}(\calG_2(\Z_2))\mu_{q,q}(\calG_q(\Z_q))\right) = \frac{1}{2}\prod_{\substack{3\leq j\leq n \\ j\text{ odd}}} \zeta(j)^{-1} (1+O(q^{-1})).$$
For $n\geq 4$ even, as the prime $q$ goes to $\infty$, we have that 
\begin{multline*}\prod_{p\neq 2,q}\mu_{q,p}(V_q(\Z_p))\cdot\left(\sum_{\substack{\mathcal{G}_2,\calG_q\\ c_2(\calG_2)c_q(\calG_q)=c_\infty(\Omega)}}\mu_{q,2}(\calG_2(\Z_2))\mu_{q,q}(\calG_q(\Z_q))\right) = \\ \frac{1}{2}\prod_{\substack{3\leq j\leq n\\ j\text{ odd}}} \zeta(j)^{-1} L(n/2, \legendre{(-1)^{n/2}q}{\cdot}) \cdot (1+O(q^{-1})) \\ \times\zeta(n)^{-1}  \begin{cases}
    1, & q\not\equiv (-1)^{n/2} \bmod 4, \\
    1+\frac{2^{1-n}}{1-2^{-n/2}}, & q \equiv (-1)^{n/2}\bmod 4, q\equiv \pm 1 \bmod 8,\\
    1-\frac{2^{1-n}}{1-2^{-n/2}}, & q\equiv (-1)^{n/2}\bmod 4, q\equiv \pm 3 \bmod 8.
\end{cases}\end{multline*}
\end{prop}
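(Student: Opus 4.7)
The plan is to evaluate each local Tamagawa density separately, then assemble them into an Euler product that telescopes into $\zeta$-values (and, for $n$ even, an $L$-value). Concretely I would first split the orbit sum using the indicator identity
\begin{equation*}
\mathbf{1}\{c_2(\calG_2)c_q(\calG_q)=c_\infty(\Omega)\}=\tfrac{1}{2}\bigl(1+c_\infty(\Omega)\,c_2(\calG_2)c_q(\calG_q)\bigr),
\end{equation*}
so that the sum over compatible pairs becomes a \emph{main term} $\tfrac{1}{2}\mu_{q,2}(V_q(\Z_2))\mu_{q,q}(V_q(\Z_q))$ plus a \emph{twisted term} weighted by $c_2(\calG_2)c_q(\calG_q)$. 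The prefactor $\tfrac{1}{2}$ appearing in the proposition is built into this identity and the $q\bmod 8$ case distinction will come entirely from the twisted term at $p=2$.

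For the factors at good primes $p\nmid 2q$, the set $V_q(\Z_p)$ is a single $\SL_n(\Z_p)$-orbit (by the theory of quadratic forms over $\Z_p$), and $\mu_{q,p}(V_q(\Z_p))$ can be computed by Hensel lifting from $\F_p$ in terms of the standard $p$-density $\alpha_p^{\std}$ of Equation (\ref{eq: standard p mass}). Multiplying over such primes and using the Euler product identity $\prod_p(1-p^{-k})=\zeta(k)^{-1}$, I expect to recover $\prod_{3\leq j\leq n,\,j\text{ odd}}\zeta(j)^{-1}$ (as an incomplete product missing its Euler factors at $2$ and $q$); for $n$ even the extra term $(1-p^{-n/2}\legendre{(-1)^{n/2}q}{p})$ in Equation (\ref{eq: standard p mass}) assembles into the incomplete Dirichlet $L$-value $L^{(2q)}(n/2,\legendre{(-1)^{n/2}q}{\cdot})$.

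The main term of the Hasse identity is then completed by inserting Lemma \ref{lem: p-adic density at p} at $p=q$ and Lemma \ref{lem: 2-adic density at p} at $p=2$: the explicit $q^{-1}/2$ from Lemma \ref{lem: p-adic density at p} is absorbed (together with the matching contribution from the orbit growth at $p=q$) to produce the correct local factor at $q$, and the $p=2$ factor fills in the missing Euler factor at $2$, completing the incomplete products to full $\zeta$-values and (for $n$ even) a full $L$-value. The twisted term vanishes to the stated order for $n$ odd by a parity cancellation of the Hilbert symbols over the Jordan blocks; for $n$ even it contributes exactly the $(1\pm 2^{1-n}/(1-2^{-n/2}))$ corrections in the case $q\equiv(-1)^{n/2}\bmod 4$, the sign being governed by whether $q\equiv\pm 1$ or $\pm 3\bmod 8$ via the octane invariant $\calO_S$, while in the remaining case $q\not\equiv(-1)^{n/2}\bmod 4$ the Jordan block $f_0$ is of Type~I and the twisted contribution is $O(q^{-1})$.

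The main obstacle is the careful case analysis at $p=2$. Identifying the Jordan type of $f_0$ (Type~I versus Type~II) as a function of $q\bmod 4$, computing the octane invariant $\calO_S$ as a function of $q\bmod 8$, and then feeding the correct branch of Lemma \ref{lem: 2-adic density at p} into the twisted term requires substantial bookkeeping — in particular, one must track how $c_\infty(\Omega)=(-1)^{s(s-1)/2}$ interacts with the $2$-adic and $q$-adic Hasse invariants of each orbit representative, and convert this to the sign in front of $2^{1-n}/(1-2^{-n/2})$. By contrast, the analysis at $p=q$ via Lemma \ref{lem: p-adic density at p} is cleaner, since the Jordan decomposition of a symmetric matrix of prime determinant $q$ has the rigid shape $f_0\oplus f_1$ with $\dim f_0=n-1$ and $\dim f_1=1$.
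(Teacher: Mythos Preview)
Your indicator-identity decomposition
\[
\mathbf{1}\{c_2(\calG_2)c_q(\calG_q)=c_\infty(\Omega)\}=\tfrac{1}{2}\bigl(1+c_\infty(\Omega)\,c_2(\calG_2)c_q(\calG_q)\bigr)
\]
is a perfectly valid way to organize the constrained orbit sum, and in fact it is equivalent to the paper's more direct observation that for each $\calG_2$ there is \emph{exactly one} compatible $\calG_q$ (so the constrained sum collapses to $\sum_{\calG_2}\mu_{q,2}(\calG_2)\cdot\mu_{q,q}(\calG_q^*)$, with $\mu_{q,q}(\calG_q^*)\approx\tfrac{1}{2}\mu_{q,q}(V_q(\Z_q))$). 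Both routes lead to the same main term $\tfrac{1}{2}\mu_{q,2}(V_q(\Z_2))\mu_{q,q}(V_q(\Z_q))$.

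However, your analysis of the twisted term is wrong, and this is a genuine gap in your plan. The twisted term factors as
\[
\tfrac{c_\infty(\Omega)}{2}\Bigl(\sum_{\calG_2}c_2(\calG_2)\mu_{q,2}(\calG_2)\Bigr)\Bigl(\sum_{\calG_q}c_q(\calG_q)\mu_{q,q}(\calG_q)\Bigr),
\]
and the second factor is \emph{always} negligible: the two $\SL_n(\Z_q)$-orbits carry opposite Hasse invariants $c_q=\pm1$, while by Lemma~\ref{lem: p-adic density at p} (and the explicit formula~(\ref{eq: p-adic density formula}) for $M_q(f_0),M_q(f_1)$) their densities agree to within $O(q^{-1})$ --- indeed, for $n$ even the densities are \emph{exactly} equal, since $f_0$ has odd dimension $n-1$ and $M_q(f_0)$ is then independent of $\Delta_0$. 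So the twisted term is $O(q^{-1})$ in all cases and cannot possibly produce the $q\bmod 8$ case distinction.

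The case distinction for $n$ even therefore lives entirely in your \emph{main term}, specifically in the factor $\mu_{q,2}(V_q(\Z_2))=\sum_{\calG_2}\mu_{q,2}(\calG_2)$. This total $2$-adic density genuinely depends on $q\bmod 8$: when $q\equiv(-1)^{n/2}\bmod 4$ there is, in addition to the two Type~I orbits at $p=2$, a third $\SL_n(\Z_2)$-orbit with $f_0$ of Type~II (with octane $0$ or $4$ according to $q\bmod 8$), and it is the density of this extra orbit --- computed via the last two lines of Lemma~\ref{lem: 2-adic density at p} --- that produces the correction $\tfrac{2^{1-n}}{1\mp 2^{-n/2}}$. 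When $q\not\equiv(-1)^{n/2}\bmod 4$ there are only the two Type~I orbits, and their $\alpha_2^{-1}$ contributions sum to the clean value giving the first case. This is exactly how the paper proceeds (computing $\sum_{\calG_2}\mu_{q,2}(\calG_2)$ by enumerating the $\Z_2$-orbits case by case). Your plan to look for the correction in the twisted term would lead you to compute something that vanishes, and you would then be unable to locate the source of the $q\bmod 8$ dependence without revisiting the main term.
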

\begin{proof}
First, we remark that for any finite place $p$, and any $\SL_n(\Z_p)$-orbit $\calG$ of matrices inside $V_q(\Z_p)$, we have that $$\mu_{p,q}(\calG(\Z_p)) = \lim_{k\rightarrow\infty}\frac{\#\calG(\Z/p^k\Z)}{p^{k(\frac{n(n+1)}{2}-1)}},$$
where $\#\calG(\Z/p^k\Z)$ are the number of matrices in $V_q(\Z/p^k\Z)$ inside the orbit $\calG$. \\

We begin our analysis with the case of $n$ odd. First, we look at the standard primes: $p\neq 2,q.$ When dealing with these primes, the action of $\SL_n(\Z/p^k\Z)$ on $V_q(\Z/p^k\Z)$ given by $S\cdot g = g^T S g$ for $g\in \SL_n(\Z/p^k\Z)$ and $S\in V_q(\Z/p^k\Z)$ is transitive. Thus, we get that 
    $$\# V_q(\Z/p^k\Z) = \frac{\#\SL_n(\Z/p^k\Z)}{\#\{g\in \SL_n(\Z/p^k\Z): g^T S g= S\}}.$$
    For the numerator, it is not hard to see that $$\#\SL_n(\Z/p^k\Z) = p^{k(n^2-1)} \prod_{j=2}^n (1-p^{-j}).$$ On the other hand, for $k$ sufficiently large, $$\#\{X \in \GL_n(\Z/p^k\Z): X^T S X = S\} = 2\alpha_p(S) p^{k\frac{n(n-1)}{2}}.$$
    This in turn gives us that since $n$ is odd and $p\neq 2,q$, $$\#\{g\in \SL_n(\Z/p^k\Z):g^TSg=S\} = \alpha_p(S) p^{k\frac{n(n-1)}{2}}.$$
    Thus, we have that $$\lim_{k\rightarrow\infty}\frac{\#V_q(\Z/p^k\Z)}{p^{k(\frac{n(n+1)}{2}-1)}} = \alpha_p^{\std}(S)^{-1} \prod_{j=2}^n (1-p^{-j}) = \prod_{\substack{3\leq j\leq n\\ j \text{ odd}}} (1-p^{-j}).$$

    Next, we handle the case when $n\geq 3$ is odd and $p=q.$ In this case, the action of $\SL_n(\Z/q^k\Z)$ on $V_q(\Z/q^k\Z)$ is no longer transitive and we have two disjoint orbits. Let us call the two representatives $S_1^q$ and $S_2^q$. Like before, for $k$ sufficiently large, we can express the stabilizers in terms of the Siegel masses: $$\#\{X\in \GL_n(\Z/q^k\Z) : X^T S_i^q X = S_i^q\} = 2\alpha_q(S_i) q^{k\frac{n(n-1)}{2}}.$$
    We note that this indeed implies $$\#\{g\in \SL_n(\Z/q^k\Z): g^T S_i^q g = S_i^q\} = \frac{2\alpha_q(S_i^q) q^{k\frac{n(n-1)}{2}}}{\#\{x\bmod q^k: x^2q \equiv q \bmod q^k\}} = \alpha_q(S_i^q) q^{\frac{kn(n-1)}{2}-1}.$$
    Hence, for any of the orbits $\calG_q$: $$\lim_{k\rightarrow\infty} \frac{\#\mathcal{G}_q(\Z/q^k\Z)}{q^{k(\frac{n(n+1)}{2}-1)}} = q\prod_{j=2}^n(1-q^{-j}) \alpha_q(\mathcal{G}_q)^{-1}.$$
    Applying Lemma \ref{lem: p-adic density at p}, we can see that $$\lim_{k\rightarrow\infty} \frac{\#\mathcal{G}_q(\Z/q^k\Z)}{q^{k(\frac{n(n+1)}{2}-1)}} = \frac{1}{2} \prod_{\substack{3\leq j\leq n\\ j\text{ odd}}} (1-q^{-j}) (1+O(q^{-2})).$$

    Finally, we look at the case $p=2$. We have two orbits here generated by $S_1^2$ and $S_2^2$, where $\calO_{S_1} \equiv \pm 1 \bmod 8$ and $\calO_{S_2}\equiv \pm 3 \bmod 8$. We note that both $S_1^2$ and $S_2^2$ will appear in the final sum; in particular, for each of the $S_i^2$ there exists exactly one $\SL_n(\Z_q)$-orbit, represented by $S_j^q$, such that $$c_2(S_i^2)c_q(S_j^q) = c_\infty(\Omega).$$  Hence, we get that  
    $$\prod_{p\neq 2,q}\mu_{q,p}(V_q(\Z_p))\cdot\hspace{-0.75cm}\sum_{\substack{\mathcal{G}_2,\calG_q\\ c_2(\calG_2)c_q(\calG_q)=c_\infty(\Omega)}}\hspace{-0.75cm}\mu_{q,2}(\calG_2(\Z_2))\mu_{q,q}(\calG_q(\Z_q)) = \frac{1}{2}\prod_{\substack{3\leq j\leq n\\ j\text{ odd}}} \prod_{p\neq 2} (1-p^{-j})(1+O(q^{-2})) \cdot \sum_{\calG_2} \mu_{q,2}(\calG_2(\Z_2)),$$ where we are summing over the disjoint orbits inside of $V_q(\Z_2).$ \\
    
    In order to conclude the odd case of the Proposition, we are left with evaluating
    $$ \#V_q(\Z/2^k\Z)= \frac{\#\SL_n(\Z/2^k\Z)}{\#\{g\in \SL_n(\Z/2^k\Z): g^T S_1^2 g = S_1^2\}}+\frac{\#\SL_n(\Z/2^k\Z)}{\#\{g\in \SL_n(\Z/2^k\Z): g^T S_2^2 g = S_2^2\}}.$$
    We note again that $$\#\{X\in \GL_n(\Z/2^k\Z): X^T S_i^2 X = S_i^2\} = 2\alpha_2(S_i^2) 2^{\frac{kn(n-1)}{2}}.$$
    For $n$ odd and $k$ sufficiently large,  we will have that $$\#\{g\in \SL_n(\Z/2^k\Z):g^TS_i^2g = S_i^2\} = \frac{1}{4} \#\{X\in \GL_n(\Z/2^k\Z): g^TS_i^2 g = S_i^2\} = \frac{\alpha_2(S_i^2)}{2} \cdot 2^{\frac{kn(n-1)}{2}},$$ where we have used that the equation $x^2\equiv 1\bmod 2^k$ has $4$ solutions for $k\geq 3.$ \\
    
    Summing these together, we obtain \begin{align*}\sum_{\calG_2} \mu_{q,2}(\calG_2(\Z_2)) = \lim_{k\rightarrow\infty} \frac{\#V_q(\Z/2^k\Z)}{2^{k(\frac{n(n+1)}{2}-1)}} &= \prod_{j=2}^n (1-2^{-j})(2\alpha_2(S_1^2)^{-1} + 2\alpha_2(S_2^2)^{-1})\\
    &= \prod_{\substack{3\leq j\leq n\\ j\text{ odd}}}(1-2^{-j}) \cdot (1-2^{1-n})\cdot \left(\frac{1}{2}(1-2^{\frac{1-n}{2}})^{-1} + \frac{1}{2}(1+2^{\frac{1-n}{2}})^{-1}\right).\end{align*}
    Above, we applied Lemma \ref{lem: 2-adic density at p}. We evaluate this expression to derive: 
    $$\lim_{k\rightarrow\infty} \frac{\#V_q(\Z/2^k\Z)}{2^{k(\frac{n(n+1)}{2}-1)}} =\prod_{\substack{3\leq j\leq n\\ j\text{ odd}}}(1-2^{-j}).$$
    Combining all of the factors together, we have the odd case of Proposition \ref{prop: evaluation of the constant symmetric}. \\

    We now move on to the even case. The strategy will be the same as before, with the details being slightly more cumbersome due to the complexities of the possible Type II Jordan blocks showing up at the prime $2$. We start by handling the standard primes: $p \neq 2,q.$ Again, we consider the action of $\SL_n(\Z/p^k\Z)$ on $V_q(\Z/p^k\Z)$ given by $S\cdot g= g^T Sg$. Like before, the action is transitive and since $S$ is diagonalizable ($p$ is a standard prime), we can assume that $S$ is diagonal. We have $$\#V_q(\Z/p^k\Z)  = \frac{\#\SL_n(\Z/p^k\Z)}{\#\{g\in \SL_n(\Z/p^k\Z): g^TSg = S\}} = \frac{2\cdot \#\SL_n(\Z/p^k\Z)}{\#\{g\in \GL_n(\Z/p^k\Z) : g^TSg=S\}},$$
    where the second equality comes from the fact that since $S$ is diagonal, for any element of $\GL_n(\Z/p^k\Z)$ such that $g^TSg=S$ and $\det(g) = -1$, we have that $\text{diag}(-1,1,...,1)\cdot g \in \SL_n(\Z/p^k\Z)$ and $(\text{diag}(-1,1,...,1)\cdot g)^T S (\text{diag}(-1,1,...,1)\cdot g) = S$. Since $$\#\{X\in \GL_n(\Z/p^k\Z):X^TSX  = S\} = 2\alpha_p(S) p^{k\frac{n(n-1)}{2}} = 2 p^{k\frac{n(n-1)}{2}}\prod_{i=1}^{n/2-1} (1-p^{-2k}) \cdot (1-p^{-n/2}\legendre{(-1)^{n/2}q}{p}),$$
    we obtain 
    \begin{equation}\label{eq: local factor at standard primes even}\lim_{k\rightarrow\infty} \frac{\#V_q(\Z/p^k\Z)}{p^{k(\frac{n(n+1)}{2}-1)}} = \prod_{\substack{3\leq j\leq n\\ j\text{ odd}}}(1-p^{-j}) \cdot (1-p^{-n}) \left(1-p^{-n/2}\legendre{(-1)^{n/2} q}{p}\right)^{-1}.\end{equation}

    Next, we consider the case $p=q$ with $n$ even. There are now two orbits of the action of $\SL_n(\Z/q^k\Z)$ on $V_q(\Z/q^k\Z)$ and again we can take our two representatives to be diagonal matrices $S_1^q$ and $S_2^q$. We consider each term individually: For $k$ sufficiently large $$\#\{X\in \GL_n(\Z/q^k\Z) : X^T S_i^q X = S_i^q\} = 2\alpha_q(S_i^q) q^{k\frac{n(n-1)}{2}},$$
    which implies $$\#\{g\in \SL_n(\Z/q^k\Z): g^T S_i^q g = S_i^q\} = \frac{2\alpha_q(S_i^q) q^{k\frac{n(n-1)}{2}}}{\#\{x\bmod q^k: x^2q \equiv q \bmod q^k\}} = \alpha_q(S_i^q) q^{\frac{kn(n-1)}{2}-1}.$$
    Thus, we have that $$\lim_{k\rightarrow\infty} \frac{\#\mathcal{G}_q(\Z/q^k\Z)}{q^{k(\frac{n(n+1)}{2}-1)}} = q \prod_{j=2}^n (1-q^{-j})\alpha_q(\mathcal{G}_q)^{-1},$$ for $\calG_q$ an $\SL_n(\Z_q)$-orbit  inside of $V_q(\Z_q).$ From Lemma \ref{lem: p-adic density at p}, we know that $$\alpha_q(\mathcal{G}_q)^{-1}= \frac{1}{2q}\cdot \prod_{j=1}^{n/2-1} (1-q^{-2j})^{-1} \cdot \left(1+O(q^{-2})\right),$$
    concluding that 
    \begin{equation}\label{eq: local factor at prime p even}
        \lim_{k\rightarrow\infty} \frac{\#\mathcal{G}_q(\Z/q^k\Z)}{q^{k(\frac{n(n+1)}{2}-1)}} = \frac{1}{2}\prod_{\substack{3\leq j\leq n\\ j\text{ odd}}} (1-q^{-j}) \cdot (1-q^{-n}) \cdot (1+O(q^{-2})).
    \end{equation}
    
    We are left with the trickiest case to consider: $n$ is even and $p=2$. Again, for each $\SL_n(\Z_2)$-orbit there is exactly one $\SL_n(\Z_q)$-orbit, such that the product of the Hasse-Minkowski invariants is $c_\infty(\Omega)$. So, it suffices to calculate $$\sum_{\calG_2}\mu_{q,2}(\calG_2(\Z_2)) = \lim_{k\rightarrow\infty}\frac{\#V_q(\Z/2^k\Z)}{2^{k(\frac{n(n+1)}{2}-1)}}.$$
    To see this, we recall that $$\sum\limits_{\mathcal{G}_2}\#\mathcal{G}_2(\Z/2^k\Z)=\#V_q(\Z/2^k\Z) = \sum_{S_i} \frac{\#\SL_n(\Z/2^k\Z)}{\#\{X\in \SL_n(\Z/2^k\Z): X^TS_i X = S_i\}}.$$ As we explained earlier $$\#\{X \in \GL_n(\Z/2^k\Z): X^T S_i X = S_i\} = 4 \#\{X \in \SL_n(\Z/2^k\Z):X^TS_i X = S_i\}.$$
    Now, if $q \not\equiv (-1)^{n/2} \bmod 4$, then there are again two orbits -- both have $f_0$ being Type I, and either we have $\calO_{S_1} \equiv 0 \bmod 8, \calO_{S_2} \equiv 4 \bmod 8$ or $\calO_{S_1} \equiv 2 \bmod 8$ and $\calO_{S_2} \equiv -2 \bmod 8$. Thus, we have that $$\lim_{k\rightarrow\infty} \frac{\#V_q(\Z/2^k\Z)}{2^{k(\frac{n(n+1)}{2}-1)}} = \prod_{j=2}^n (1-2^{-j}) \left(2\alpha_2(S_1)^{-1} + 2\alpha_2(S_2)^{-1}\right).$$
    Applying Lemma \ref{lem: 2-adic density at p}, we see that if $\calO_{S_1}\equiv 0 \bmod 8$ and $\calO_{S_2}\equiv 4\bmod 8$, $$\alpha_2(S_1)^{-1} + \alpha_2(S_2)^{-1} = \frac{1}{4}\prod_{i=1}^{\frac{n-4}{2}}(1-2^{-2i})^{-1}\left((1-2^{1-n/2})^{-1} + (1+2^{1-n/2})^{-1}\right)=\frac{1}{2}\prod_{i=1}^{\frac{n-2}{2}}(1-2^{-2i})^{-1}.$$
    Similarly, if $\calO_{S_1}\equiv 2 \bmod 8$ and $\calO_{S_2}\equiv -2 \bmod 8$, we have 
    $$\alpha_2(S_1)^{-1} +  \alpha_2(S_2)^{-1} = \frac{1}{4}\prod_{i=1}^{\frac{n-4}{2}} (1-2^{-2i})^{-1} \left(2\cdot (1-2^{2-n})^{-1}\right) = \frac{1}{2}\prod_{i=1}^{\frac{n-2}{2}}(1-2^{-2i})^{-1}.$$
    Thus, in the case where $q\not\equiv (-1)^{n/2}\bmod 4$, we have that 
    \begin{equation}\label{eq: local density at 2 even nice q}
        \lim_{k\rightarrow\infty} \frac{\#V_q(\Z/2^k\Z)}{2^{k(\frac{n(n+1)}{2}-1)}} =\prod_{\substack{3\leq j\leq n\\ j\text{ odd}}}(1-2^{-j}) \cdot (1-2^{-n}).
    \end{equation}

    If $q \equiv (-1)^{n/2}\bmod 4$ and $q\equiv \pm 1 \bmod 8$, there are three orbits generated by $S_1,S_2,S_3$ -- two of them $S_1,S_2$ are such that $f_0$ is Type I and $\calO_{S_1}-\calO_{S_2} \equiv 4 \bmod 8$ and the final one $S_3$ is such that $f_0$ is Type II and $\calO_{S_3} \equiv 0 \bmod 8.$    
    We know that $$\lim_{k\rightarrow\infty} \frac{\#V_q(\Z/2^k\Z)}{2^{k(\frac{n(n+1)}{2}-1)}} = \prod_{j=2}^n (1-2^{-j}) \left(2\alpha_2(S_1)^{-1} + 2\alpha_2(S_2)^{-1} + 2\alpha_2(S_3)^{-1}\right),$$
    and we can compute \begin{align*}\alpha_2(S_1)^{-1} + \alpha_2(S_2)^{-1} + \alpha_2(S_3)^{-1} &= \frac{1}{2}\prod_{i=1}^{\frac{n-2}{2}}(1-2^{-2i})^{-1} + 2^{-n}\prod_{i=1}^{\frac{n-2}{2}}(1-2^{-2i})^{-1} \cdot (1-2^{-n/2})^{-1}\\
    &= \prod_{i=1}^{\frac{n-2}{2}} (1-2^{-2i})^{-1} \left(\frac{1}{2}+\frac{1}{2^n(1-2^{-n/2})}\right).\end{align*}
    Hence we obtain 
    \begin{equation}\label{eq: local density at 2 even q pm 1}
    \lim_{k\rightarrow\infty}\frac{\#V_q(\Z/2^k\Z)}{2^{k(\frac{n(n+1)}{2}-1)}} = \prod_{\substack{3\leq j\leq n\\ j\text{ odd}}}(1-2^{-j}) \cdot (1-2^{-n}) \cdot \left(1+\frac{2^{1-n}}{1-2^{-n/2}}\right).
    \end{equation}

    Our final case is when $q \equiv (-1)^{n/2} \bmod 4$ and $q\equiv \pm 3 \bmod 8.$ We have a similar situation here to the case before, except our three orbits are generated by $S_1,S_2,S_3$ -- where $S_{1}, S_2$ have $f_0$ Type I and $\calO_{S_1}-\calO_{S_2}\equiv 4 \bmod 8$, and $S_3$ has $f_0$ Type II and $\calO_{S_3}\equiv 4 \bmod 8.$ Following similar computations as above, we get that 
    \begin{equation}\label{eq: local density at 2 even q pm 3}
    \lim_{k\rightarrow\infty}\frac{\#V_q(\Z/2^k\Z)}{2^{k(\frac{n(n+1)}{2}-1)}} = \prod_{\substack{3\leq j\leq n\\ j\text{ odd}}}(1-2^{-j}) \cdot (1-2^{-n}) \cdot \left(1+\frac{2^{1-n}}{1+2^{-n/2}}\right).
    \end{equation}
    Combining (\ref{eq: local factor at standard primes even}), (\ref{eq: local factor at prime p even}), (\ref{eq: local density at 2 even nice q}), (\ref{eq: local density at 2 even q pm 1}), and (\ref{eq: local density at 2 even q pm 3}) together, we get the desired result. 
\end{proof}

Let us sum up the $N(p,\Omega)$ for $n\geq 3$ odd: applying Corollary \ref{cor: N(q,Omega) sym} and Proposition \ref{prop: evaluation of the constant symmetric}, we have that as $T\rightarrow\infty$ 
\begin{align*}\pi(T\Omega) &= \sum_{p\leq T^n} N(p,\Omega) \\
&=   2\mu_\infty(\Omega)(1+o_\Omega(1)) \sum_{p\leq T^n} p^{\frac{n-1}{2}} \cdot \frac{1}{2}\prod_{\substack{3\leq j\leq n\\ j\text{ odd}}}\zeta(j)^{-1} (1+O(p^{-1})) \\
&= \mu_\infty(\Omega) \prod_{\substack{3\leq j\leq n\\ j\text{ odd}}}\zeta(j)^{-1} (1+o_\Omega(1)) \sum_{p\leq T^n} p^{\frac{n-1}{2}} \\
&= \mu_\infty(\Omega) \prod_{\substack{3\leq j\leq n\\ j\text{ odd}}}\zeta(j)^{-1} \frac{2T^{\frac{n(n+1)}{2}}}{(n+1)\log(T^n)}(1+o_\Omega(1)).
\end{align*}
We remark that Lemma \ref{lem: the real part for cones PNT} tells us that $$\int_{T\Omega} \frac{1}{\log^+(\det(x))}dx = \mu_\infty(\Omega) \cdot \frac{T^{\dim(V)}}{\dim(V)\log(T)}(1+o(1)).$$
Since $\dim(V) = \frac{n(n+1)}{2},$ this gives us the following result. 

\begin{lemma}\label{lem: PNT cones symmetric n odd good orbit} For $n\geq 3$ odd, and $\Omega$ a compact subset in the orbit $\calO_{p,q}$ with $p,q\neq 0$,  we have that \begin{equation*}
    \pi(T\Omega) = \psi(p,q) \prod_{\substack{3\leq j\leq n\\ j\text{ odd}}} (1+o_\Omega(1)) \int_{T\Omega} \frac{1}{\log^+(\det(x))}dx,
\end{equation*}
where $$\psi(p,q) = \begin{cases}
    1, & q\text{ even},\\
    0, & \text{otherwise.}
\end{cases}$$
\end{lemma}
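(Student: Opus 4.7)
The plan is to split into cases on the parity of the negative signature index $q$, then reduce to a partial summation of the asymptotic already established for the level sets.

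First I would dispose of the case $q$ odd. Since $n$ is odd, the map $X\mapsto tX$ multiplies $\det$ by $t^n$, which is nonnegative for $t\in[0,T]$. On the orbit $\calO_{p,q}$ the determinant has sign $(-1)^q$, so every point of $T\Omega$ has strictly negative determinant when $q$ is odd. Since $\pi(T\Omega)$ counts positive primes among the determinants, it vanishes identically, matching $\psi(p,q)=0$.

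Second, assume $q$ is even, so that $\det$ is positive on $T\Omega$ and bounded above by $T^n$ (for a cone of height $T$ over $\Omega\subset V_1(\R)$). The basic decomposition is
\begin{equation*}
\pi(T\Omega)=\sum_{\substack{p\leq T^n\\ p\text{ prime}}} N(p,\Omega).
\end{equation*}
Applying Corollary \ref{cor: N(q,Omega) sym} and substituting the local evaluation from Proposition \ref{prop: evaluation of the constant symmetric} for $n$ odd gives, as $p\to\infty$,
\begin{equation*}
N(p,\Omega)=2\mu_\infty(\Omega)\,p^{(n-1)/2}\cdot \tfrac{1}{2}\prod_{\substack{3\leq j\leq n\\ j\text{ odd}}}\zeta(j)^{-1}\cdot(1+o_\Omega(1))(1+O(p^{-1})).
\end{equation*}

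Third, I would perform partial summation. By the prime number theorem $\sum_{p\leq X}p^{(n-1)/2}\sim \tfrac{2 X^{(n+1)/2}}{(n+1)\log X}$; taking $X=T^n$ yields the main term
\begin{equation*}
\mu_\infty(\Omega)\prod_{\substack{3\leq j\leq n\\ j\text{ odd}}}\zeta(j)^{-1}\cdot\frac{2\,T^{n(n+1)/2}}{n(n+1)\log T}\,(1+o_\Omega(1)).
\end{equation*}
Truncation at any slowly growing cutoff $P_0(T)\to\infty$ shows that the primes $p\leq P_0(T)$, where $(1+o_\Omega(1))$ is not yet uniformly small, contribute only $O_\Omega(P_0(T)^{(n+1)/2})$, which is lower order.

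Finally, Lemma \ref{lem: the real part for cones PNT} with $\dim(V)=n(n+1)/2$ identifies this main term with $\prod\zeta(j)^{-1}\cdot\int_{T\Omega}\frac{dx}{\log^+(\det x)}(1+o(1))$, producing the stated asymptotic. The only real technical point is the uniformity of the $o_\Omega(1)$ in Corollary \ref{cor: N(q,Omega) sym} needed to justify the partial summation; this is handled by the standard truncation argument above, so the argument is essentially a bookkeeping of the pieces already assembled in Proposition \ref{prop: evaluation of the constant symmetric} and Lemma \ref{lem: the real part for cones PNT}.
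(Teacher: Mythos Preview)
Your proof is correct and follows essentially the same route as the paper: decompose $\pi(T\Omega)=\sum_{p\le T^n}N(p,\Omega)$, feed in Corollary \ref{cor: N(q,Omega) sym} and Proposition \ref{prop: evaluation of the constant symmetric}, sum via the prime number theorem, and identify the result with the integral using Lemma \ref{lem: the real part for cones PNT}. You are slightly more explicit than the paper in two places---the trivial disposal of the $q$ odd case via the sign of the determinant, and the truncation argument handling the non-uniformity of $o_\Omega(1)$ for small primes---but these are elaborations of steps the paper leaves implicit rather than a different approach.
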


Finally, we end this section by summing up the $N(p,\Omega)$ for $n\geq 4$ even. We utilize that while Proposition \ref{prop: evaluation of the constant symmetric} has many cases, they are completely determined by the value of $p$ mod 8. So, for $T$ sufficiently large, we can write: 

\begin{align*}
    \pi(T\Omega) &= \sum_{\substack{p\leq T^n \\ p\not\equiv (-1)^{n/2}\bmod 4}} N(p,\Omega) + \sum_{\substack{p\leq T^n \\ p\equiv (-1)^{n/2}\bmod 4\\ p\equiv \pm 1 \bmod 8}} N(p,\Omega) + \sum_{\substack{p\leq T^n \\ p\equiv (-1)^{n/2}\bmod 4 \\ p\equiv \pm 3 \bmod 8}} N(p,\Omega) \\
    &= \frac{1}{2}\psi(p,q)\mu_\infty(\Omega)\zeta(n)^{-1} \prod_{\substack{3\leq j\leq n\\ j\text{ odd}}}\zeta(j)^{-1} (1+o_\Omega(1))\\ &\hspace{1cm}\times \Bigg(\sum_{\substack{p\leq T^n \\ p\not\equiv (-1)^{n/2}\bmod 4}} L(n/2, \chi_p) p^{\frac{n-1}{2}} + \sum_{\substack{p\leq T^n \\ p\equiv (-1)^{n/2}\bmod 4\\ p\equiv \pm 1 \bmod 8}} L(n/2,\chi_p) p^{\frac{n-1}{2}}\cdot (1+\frac{2^{1-n}}{1-2^{-n/2}}) \\
    &\hspace{4cm} + \sum_{\substack{p\leq T^n \\ p\equiv (-1)^{n/2}\bmod 4 \\ p\equiv \pm 3\bmod 8}} L(n/2,\chi_p) p^{\frac{n-1}{2}} (1+\frac{2^{1-n}}{1+2^{-n/2}})\Bigg),
\end{align*}
where $\chi_p = \legendre{(-1)^{n/2} p}{\cdot}.$ To evaluate the above sums, it suffices to determine an asymptotic for the following: 
\begin{lemma}\label{lem: summing the L function}
    For $n\geq 4$, we have that for $(a,8)=1$ as $T\rightarrow\infty$, $$\sum_{\substack{p\leq T^n \\ p\equiv a \bmod 8}} L(n/2,\chi_p) p^{\frac{n-1}{2}} = \frac{\zeta(n)}{4}\cdot \left(1-2^{-n}\right)\cdot \left(\frac{2T^{\frac{n(n+1)}{2}}}{n(n+1)\log(T)} (1+o(1)\right).$$
\end{lemma}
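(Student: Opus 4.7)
The plan is to expand
\[
L(n/2,\chi_p)=\sum_{\substack{m\ge 1\\ m\text{ odd}}}\chi_p(m)\,m^{-n/2}
\]
(the Dirichlet series matching the Euler product over odd primes that comes out of the Siegel calculation used above; it converges absolutely since $n/2\ge 2$), swap the order of summation with the sum over primes $p$, and show that the main contribution comes from $m$ a perfect odd square. Concretely, after swapping we have
\[
\sum_{\substack{p\leq T^n\\ p\equiv a \bmod 8}}L(n/2,\chi_p)\,p^{(n-1)/2}=\sum_{m\text{ odd}}m^{-n/2}\,S(T;m,a),\qquad S(T;m,a):=\sum_{\substack{p\leq T^n\\ p\equiv a \bmod 8}}\chi_p(m)\,p^{(n-1)/2}.
\]
Truncating at $m\leq M=(\log T)^A$ (for $A$ chosen below) and using the trivial bound $|S(T;m,a)|\ll T^{n(n+1)/2}/\log T$, the tail $m>M$ contributes at most $\ll M^{1-n/2}T^{n(n+1)/2}/\log T=o(T^{n(n+1)/2}/\log T)$, since $n\geq 4$ forces $1-n/2\leq -1$.

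For $m=k^{2}$ with $k$ odd, the Jacobi/Kronecker symbol at a square gives $\chi_p(k^2)=\mathbf{1}_{\gcd(p,k)=1}$, so
\[
S(T;k^2,a)=\sum_{\substack{p\leq T^n\\ p\equiv a \bmod 8}}p^{(n-1)/2}+O\bigl(\omega(k)\,k^{(n-1)/2}\bigr).
\]
Summing $k^{-n}$ over odd $k\le\sqrt{M}$ and using $\sum_{k\text{ odd}}k^{-n}=\zeta(n)(1-2^{-n})$ yields
\[
\sum_{\substack{m=k^{2}\leq M\\ k\text{ odd}}}m^{-n/2}\,S(T;k^{2},a)=\bigl(\zeta(n)(1-2^{-n})+O(M^{-1/2})\bigr)\sum_{\substack{p\leq T^n\\ p\equiv a \bmod 8}}p^{(n-1)/2}+O(1).
\]

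For odd non-square $m$, quadratic reciprocity realises $p\mapsto\chi_p(m)$, restricted to a fixed class mod $8$, as a non-principal Dirichlet character of modulus dividing $8m$ (non-principality comes from the fact that $m$ has at least one prime divisor to odd exponent, so $\legendre{m}{\cdot}$ is non-principal). Expanding the indicator of $p\equiv a\bmod 8$ via orthogonality of the characters mod $8$ and applying Siegel--Walfisz together with partial summation, we obtain, for any fixed $B>0$,
\[
S(T;m,a)\ll_B T^{n(n+1)/2}(\log T)^{-B}
\]
uniformly for $m\leq M=(\log T)^A$. Since $\sum_{m\text{ odd non-square}}m^{-n/2}$ converges, the total non-square contribution is $\ll T^{n(n+1)/2}(\log T)^{-B}=o(T^{n(n+1)/2}/\log T)$ for $B>1$.

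Finally, by the prime number theorem in arithmetic progressions and partial summation,
\[
\sum_{\substack{p\leq T^{n}\\ p\equiv a \bmod 8}}p^{(n-1)/2}\;\sim\;\frac{1}{\varphi(8)}\cdot\frac{2T^{n(n+1)/2}}{n(n+1)\log T}\;=\;\frac{1}{4}\cdot\frac{2T^{n(n+1)/2}}{n(n+1)\log T},
\]
so combining with the $\zeta(n)(1-2^{-n})$ prefactor coming from the squares yields the asserted asymptotic. The main obstacle is securing the uniform power-of-log savings for non-square $m$; this is supplied by Siegel--Walfisz precisely because our truncation $M=(\log T)^{A}$ is polylogarithmic in $T$, so the conductor $\le 8m\leq 8(\log T)^{A}$ lies within the range where the theorem gives effective cancellation.
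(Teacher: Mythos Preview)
Your argument is correct and follows essentially the same route as the paper: expand $L(n/2,\chi_p)$ as a Dirichlet series over odd $m$, swap the order of summation, isolate the main term from square $m$ (yielding the factor $\zeta(n)(1-2^{-n})$), truncate the sum over $m$ at a polylogarithmic height, and dispose of the non-square $m$ by invoking cancellation in the resulting non-principal character sum over primes. The only cosmetic difference is that you spell out the appeal to Siegel--Walfisz and the reciprocity step more explicitly than the paper does.
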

\begin{proof}
    Since $n\geq 4$, we can expand out the definition of $L(n/2,\chi_p)$: 
    \begin{align*}
        \sum_{\substack{p\leq T^n \\ p\equiv a \bmod 8}} p^{\frac{n-1}{2}} \sum_{\substack{m=1\\ (m,2p)=1}}^\infty \legendre{(-1)^{n/2} p}{m} \frac{1}{m^{n/2}} &= \sum_{\substack{m=1\\ (m,2)=1}}^\infty \frac{1}{m^{n/2}} \sum_{\substack{p\leq T^n \\ p\equiv a \bmod 8}} p^{\frac{n-1}{2}}\legendre{(-1)^{n/2} p }{m}.
    \end{align*}
    We note that since $m$ is odd, the character $\legendre{(-1)^{n/2}p}{m}$ is trivial if and only if $m$ is a square. In this case, we have the contribution to the sum is: 
    $$\sum_{\substack{\ell=1\\ (\ell,2)=1}}^\infty \frac{1}{\ell^n} \sum_{\substack{p\leq T^n \\ p\equiv a \bmod 8}}p^{\frac{n-1}{2}} = \frac{\zeta(n)}{4} \cdot (1-2^{-n})\cdot \left(\frac{2T^{\frac{n(n+1)}{2}}}{n(n+1)\log(T)} + O(T^{\frac{n(n+1)}{2}}\log(T)^{-101})\right).$$

    On the other hand, if $m$ is not a square and $m \geq \log(T)^{101}$, we have that since $n\geq 4$, $$\sum_{m\geq \log(T)^{101}} \frac{1}{m^{n/2}}\sum_{\substack{p\leq T^n \\ p\equiv a \bmod 8}}\legendre{(-1)^{n/2}p}{m} p^{\frac{n-1}{2}} \ll \frac{T^{\frac{n(n+1)}{2}}}{\log(T)} \sum_{m\geq \log(T)^{101}} \frac{1}{m^{n/2}}\ll T^{\frac{n(n+1)}{2}}\log(T)^{-102}.$$
    Finally, if $m$ is not a square, the character is nontrivial and we have that $$\sum_{\substack{p\leq T^n\\ p\equiv a\bmod 8}}\legendre{(-1)^{n/2}p}{m} p^{\frac{n-1}{2}}\ll T^{\frac{n(n+1)}{2}}\log(T)^{-101},$$ for $m\leq \log(T)^{101}$.
    Together, we get the lemma. 
\end{proof}
Plugging this into $\pi(T\Omega)$, we get:
\begin{align*}
    \pi(T\Omega) &= \psi(p,q)\mu_\infty(\Omega) \prod_{\substack{3\leq j\leq n\\ j\text{ odd}}}\zeta(j)^{-1} (1+o_\Omega(1)) \cdot \frac{2T^{\frac{n(n+1)}{2}}}{n(n+1)\log(T)} \\
    &\hspace{1cm} \times \Bigg(\frac{1}{2}(1-2^{-n})+\frac{1}{4}(1-2^{-n})(1+\frac{2^{1-n}}{1-2^{-n/2}}) + \frac{1}{4}(1-2^{-n})(1+\frac{2^{1-n}}{1+2^{-n/2}})\Bigg)\\
    &= \psi(p,q)\mu_\infty(\Omega) \prod_{\substack{3\leq j\leq n\\ j\text{ odd}}}\zeta(j)^{-1} \cdot \frac{2T^{\frac{n(n+1)}{2}}}{n(n+1)\log(T)} \cdot (1+o_\Omega(1)).
\end{align*}
Thus, we get the following lemma.
\begin{lemma}\label{lem: PNT cones symmetric n even good orbit} For $n\geq 3$ even, and $\Omega$ a nice compact subset in the orbit $\calO_{p,q}$ with $p,q\neq 0$,  we have that as $T\rightarrow\infty$, \begin{equation*}
    \pi(T\Omega) = \psi(p,q) \prod_{\substack{3\leq j\leq n\\ j\text{ odd}}} (1+o_\Omega(1)) \int_{T\Omega} \frac{1}{\log^+(\det(x))}dx.
\end{equation*}
\end{lemma}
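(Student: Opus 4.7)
The plan is to proceed in complete parallel to the treatment of the odd case in Lemma \ref{lem: PNT cones symmetric n odd good orbit}: expand
\[
\pi(T\Omega) = \sum_{p \leq T^n} N(p,\Omega)
\]
and insert the asymptotics we have already established. For each prime $p$, Corollary \ref{cor: N(q,Omega) sym} gives $N(p,\Omega)$ as $2\mu_\infty(\Omega)\, p^{(n-1)/2}$ times a product of local masses, and Proposition \ref{prop: evaluation of the constant symmetric} evaluates these local masses. Unlike the odd case, the answer depends on $p \bmod 8$ (through the Type~I/Type~II dichotomy of the Jordan block $f_0$ at the prime $2$), so the sum must be split into three subsums according to the residue of $p$ modulo $8$.

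Concretely, the first step is to isolate the common factor
\[
\tfrac{1}{2}\psi(p,q)\,\mu_\infty(\Omega)\,\zeta(n)^{-1} \prod_{\substack{3\leq j \leq n \\ j \text{ odd}}} \zeta(j)^{-1},
\]
which reduces the problem to evaluating, separately for each residue class $a \bmod 8$,
\[
\sum_{\substack{p \leq T^n \\ p \equiv a \bmod 8}} L(n/2,\chi_p)\, p^{(n-1)/2}, \qquad \chi_p = \legendre{(-1)^{n/2} p}{\cdot}.
\]
This is precisely the sum handled by Lemma \ref{lem: summing the L function}, which extracts the diagonal (square) contribution and bounds the non-square contributions by $O(T^{n(n+1)/2}/\log(T)^{101})$ via Siegel-Walfisz type estimates.

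The main technical point will be the algebraic simplification of the three modular contributions, weighted respectively by $1$, $1 + 2^{1-n}/(1-2^{-n/2})$, and $1 + 2^{1-n}/(1+2^{-n/2})$ from Proposition \ref{prop: evaluation of the constant symmetric}. The identity to verify is
\[
\tfrac{1}{2}(1-2^{-n}) + \tfrac{1}{4}(1-2^{-n})\!\left(1 + \tfrac{2^{1-n}}{1-2^{-n/2}}\right) + \tfrac{1}{4}(1-2^{-n})\!\left(1 + \tfrac{2^{1-n}}{1+2^{-n/2}}\right) = 1,
\]
which follows because the constant term is $(1-2^{-n})$ and the $2^{1-n}$ contributions combine through $(1-2^{-n/2})(1+2^{-n/2}) = 1 - 2^{-n}$ to give exactly $2^{-n}$. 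This cancellation is what absorbs the $\zeta(n)^{-1}$ factor and produces the clean constant $\psi(p,q) \prod_{j\text{ odd}} \zeta(j)^{-1}$; I expect the bookkeeping of the three contributions to be the main obstacle, since any error in the local mass calculations at $p=2$ would propagate here.

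Finally, after the cancellation, the resulting asymptotic reads
\[
\pi(T\Omega) = \psi(p,q)\,\mu_\infty(\Omega) \prod_{\substack{3\leq j\leq n\\ j \text{ odd}}} \zeta(j)^{-1} \cdot \frac{2\,T^{n(n+1)/2}}{n(n+1)\log T}\,(1+o_\Omega(1)),
\]
and invoking Lemma \ref{lem: the real part for cones PNT} with $\dim V = n(n+1)/2$ and $\deg F = n$ identifies the right-hand side with $\psi(p,q) \prod_{j\text{ odd}} \zeta(j)^{-1} \int_{T\Omega} \log^+(\det(x))^{-1}\, dx$, which is the claimed statement.
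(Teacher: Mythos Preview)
Your proposal is correct and follows essentially the same route as the paper: split $\sum_{p\le T^n}N(p,\Omega)$ into the three residue classes modulo $8$, apply Corollary~\ref{cor: N(q,Omega) sym} together with Proposition~\ref{prop: evaluation of the constant symmetric} and Lemma~\ref{lem: summing the L function}, verify the identity
\[
\tfrac{1}{2}(1-2^{-n}) + \tfrac{1}{4}(1-2^{-n})\Bigl(1 + \tfrac{2^{1-n}}{1-2^{-n/2}}\Bigr) + \tfrac{1}{4}(1-2^{-n})\Bigl(1 + \tfrac{2^{1-n}}{1+2^{-n/2}}\Bigr) = 1,
\]
and finish with Lemma~\ref{lem: the real part for cones PNT}. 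This is exactly the paper's argument, and your verification of the cancellation (via $(1-2^{-n/2})(1+2^{-n/2})=1-2^{-n}$) is the same simplification the paper implicitly carries out.
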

\subsection{The positive-definite orbit}
For positive definite symmetric matrices, we can no longer apply Oh's result, since the stabilizer $\SO_n(\R)$ is a compact group. Instead, we appeal to an equidistribution result of Einsiedler, Margulis, Mohammadi, and Venkatesh \cite{EMMV}. We will proceed for the case of positive definite matrices $\calO_{n,0}$ (which produce positive primes), as the negative definite case is identical.

\subsubsection{Linnik equidistribution}
Let $\mathcal{M}=\PGL_n(\mathbb{Z})\backslash \PGL_n(\mathbb{R})/\PO_n(\mathbb{R})$ be the space of $\PGL_n(\mathbb{Z})$-equivalence classes of positive definite quadratic forms over $\mathbb{R}^n$. We equip $\mathcal{M}$ with the pushforward of the Haar measure on $\PGL_n(\mathbb{Z})\backslash P\GL_n(\mathbb{R})$. We now quote the main equidistribution result that we require from Einsiedler, Margulis, Mohammadi, and Venkatesh \cite{EMMV}: 

\begin{thm}[Einsiendler, Margulis, Mohammadi, Venkatesh, \cite{EMMV} Theorem 3.1]\label{thm: EMMV equidistribution of geni}
    Suppose $\{Q_i\}_{i=1}^\infty$ varies through any sequence of pairwise inequivalent, integral, positive definite quadratic forms. Then the genus of $Q_i$, considered as a subset of $\PGL_n(\Z)\backslash \PGL_n(\R) /\PO_n(\R)$, equidistributes as $i\rightarrow\infty.$
\end{thm}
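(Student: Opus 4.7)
The plan is to recast the statement adelically and then combine measure classification for unipotent/semisimple orbits with a non-escape-of-mass estimate. Let $H_i = \mathrm{Spin}(Q_i)$, a simply connected semisimple $\Q$-group. By strong approximation for $H_i$, the adelic orbit $H_i(\Q)\backslash H_i(\mathbb{A})$ embeds as a closed periodic orbit in $X = \PGL_n(\Q)\backslash \PGL_n(\mathbb{A})/K$ for an appropriate compact-open level $K$, and the image of this orbit in $\PGL_n(\Z)\backslash \PGL_n(\R)/\PO_n(\R)$ can be identified with the genus of $Q_i$. Normalizing by the Siegel mass of $Q_i$, I get a probability measure $\mu_i$ on $X$ supported on this periodic orbit; the theorem reduces to showing $\mu_i \to \mu_X$ in the weak-$*$ topology, where $\mu_X$ is the $\PGL_n(\mathbb{A})$-invariant probability measure.

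For this I would use two independent ingredients. First, measure classification: by Mozes--Shah (which packages Ratner's theorems in the periodic setting), any weak-$*$ limit $\mu_\infty$ of the $\mu_i$ is an algebraic measure, supported on a closed orbit of some reductive $\Q$-subgroup $L$ with $H_i\subset L \subset \PGL_n$ for infinitely many $i$. An intermediate $L$ preserves some extra $\Q$-rational structure on the standard representation (a splitting, an isotropic line, an auxiliary form), and by a linear algebra argument this forces the quadratic forms $Q_i$ with $\mathrm{Spin}(Q_i) \subset L$ to lie in finitely many $\PGL_n(\Z)$-classes, contradicting the hypothesis that the $Q_i$ are pairwise inequivalent and infinite in number. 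Second, I need to rule out escape of mass: using reduction theory and the Siegel mass formula input already developed in \S\ref{sec: Siegel}, or equivalently Dani--Margulis non-divergence applied to a unipotent subgroup of $H_i$, I can show that for every $\varepsilon>0$ there is a compact $K_\varepsilon \subset X$ with $\mu_i(K_\varepsilon) \geq 1 - \varepsilon$ uniformly in $i$.

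Combining these two ingredients: any weak-$*$ limit $\mu_\infty$ is a probability measure (no escape) supported on a closed $L$-orbit with $L=\PGL_n$ (no proper intermediate subgroup persists in the tail), hence $\mu_\infty = \mu_X$, and equidistribution follows. The main obstacle is the intermediate subgroup analysis: one needs a clean classification of reductive $\Q$-subgroups containing $\mathrm{Spin}(Q_i)$ together with a quantitative mechanism showing that concentration on any such $L$ would bound the discriminant of $Q_i$. This is where the EMMV approach brings in its sharpest input, namely effective equidistribution via subconvex bounds for automorphic $L$-functions; for the purely qualitative statement stated here, however, the classical combination of Ratner theory with Borel--Harish-Chandra finiteness and Siegel's mass bound suffices.
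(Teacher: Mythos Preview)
The paper does not prove this theorem; it is quoted from \cite{EMMV} as a black-box input, with no argument given beyond the citation. There is therefore no proof in the paper to compare your proposal against.

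That said, your sketch is a reasonable high-level outline of the strategy behind such results, with one caveat worth flagging. Since the $Q_i$ are positive definite, $H_i(\R) = \mathrm{Spin}(Q_i)(\R)$ is compact and contains no nontrivial unipotent elements, so Ratner/Mozes--Shah and Dani--Margulis non-divergence cannot be run over $\R$ alone; the unipotent dynamics must come from finite places. You do set things up adelically from the start, so this is implicitly accounted for, but the phrase ``a unipotent subgroup of $H_i$'' should be understood $p$-adically, and the non-escape-of-mass argument is correspondingly more delicate than the archimedean version. This compactness at infinity is exactly why the paper invokes \cite{EMMV} for the positive-definite orbit rather than the framework of \cite{Oh} used for the indefinite orbits in \S\ref{subsec: nondefinite orbit}.
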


Recall that two integral quadratic forms in the same genus have the same determinant. As a corollary we have the following:
\begin{corollary}
    Let $\mathcal{M}=\PGL_n(\mathbb{Z})\backslash \PGL_n(\mathbb{R})/\PO_n(\mathbb{R})$ be the space of positive definite quadratic forms of determinant $1$ up to $\Z$-equivalence, equipped with $\mu$, the pushforward of the Haar measure on $\PGL_n(\mathbb{Z})\backslash \PGL_n(\mathbb{R})$ normalized so that $\mu(\mathcal{M})=1$. Let $Q_m\subset \mathcal{M}$ denote the family of integral quadratic forms of determinant equal to $m$. If $\mu_m$ denotes the normalized Dirac measure over the family $Q_m$, then as $m\rightarrow\infty$:
    $$\mu_m\xrightarrow{\text{$w^*$}}\mu.$$
\end{corollary}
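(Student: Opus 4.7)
The plan is to reduce to Theorem \ref{thm: EMMV equidistribution of geni} by decomposing $Q_m$ into its finitely many constituent genera. For each $m$, write $Q_m = \bigsqcup_{j=1}^{k(m)} \calG_{m,j}$, where each $\calG_{m,j}$ is a full genus; this is a valid partition of $Q_m$ because all forms in a single genus share the same determinant. Let $\nu_{m,j}$ denote the normalized counting measure supported on $\calG_{m,j}$. Then $\mu_m$ is the convex combination
$$
\mu_m \;=\; \sum_{j=1}^{k(m)} \frac{|\calG_{m,j}|}{|Q_m|}\,\nu_{m,j}.
$$

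Next I would establish a uniform version of Theorem \ref{thm: EMMV equidistribution of geni}: for \emph{every} sequence $\{(m_k, j_k)\}_{k\geq 1}$ with $m_k \to \infty$, one has $\nu_{m_k, j_k} \xrightarrow{w^*} \mu$. To see this, pick any representative $R_{m_k, j_k} \in \calG_{m_k, j_k}$. Since $m_k \to \infty$, after passing to a subsequence we may assume $m_k$ is strictly increasing, so the $R_{m_k, j_k}$ have pairwise distinct determinants and are therefore pairwise $\Z$-inequivalent. Theorem \ref{thm: EMMV equidistribution of geni} then gives $\nu_{m_k, j_k} \xrightarrow{w^*} \mu$ along the subsequence, and a standard every-subsequence-has-a-convergent-subsubsequence argument upgrades this to weak-$*$ convergence along the entire original sequence.

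The conclusion $\mu_m \xrightarrow{w^*} \mu$ then follows by contradiction. Suppose otherwise: there exist $f \in C_b(\mathcal{M})$, $\epsilon > 0$, and $m_k \to \infty$ with $\left|\int f\, d\mu_{m_k} - \int f\, d\mu\right| \geq \epsilon$ for every $k$. Using the convex decomposition of $\mu_{m_k}$ and the triangle inequality, this forces the existence of indices $j_k$ with $\left|\int f\, d\nu_{m_k, j_k} - \int f\, d\mu\right| \geq \epsilon$, directly contradicting the uniform statement applied to $\{(m_k, j_k)\}$.

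The main obstacle is precisely this uniformity. Theorem \ref{thm: EMMV equidistribution of geni} provides equidistribution for the genera of a single prescribed sequence of inequivalent forms, but $\mu_m$ averages over every genus of determinant $m$ simultaneously, including potentially ``small'' genera whose mass is a negligible fraction of $|Q_m|$. The contradiction argument resolves this by feeding any would-be badly-behaved choice of genus at each $m$ back into EMMV, which forces equidistribution along that very sequence.
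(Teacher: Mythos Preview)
Your proof is correct and follows the same strategy as the paper: decompose $Q_m$ into its genera and apply Theorem \ref{thm: EMMV equidistribution of geni} to each. The paper's proof is terser and leaves the uniformity issue implicit, whereas you spell out (via the subsubsequence and convex-combination contradiction arguments) exactly why equidistribution of each genus as $m\to\infty$ suffices even though the set of genera varies with $m$; this extra care is warranted and fills in what the paper glosses over.
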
 

\begin{proof}

    We note that for a fixed value of $m$, since $Q_m$ contains all integral quadratic forms of determinant $m$, we can write it as $$Q_m = \cup_{\calG_i} Q_{m,\calG_i},$$
    where $\calG_i$ runs through the set of genera with determinant $m$ and $Q_{m,\calG_i}$ is a set containing all of the inequivalent integral forms inside of $\calG_i$.  Applying Theorem \ref{thm: EMMV equidistribution of geni} to $Q_{m,\calG_i}$, we have that these genera will equidistribute as $m\rightarrow\infty$. In otherwords, if $\mu_{m,\calG_i}$ denotes the Dirac measure on $Q_{m,\calG_i}$, then we have that as $m\rightarrow\infty$, $$\frac{\mu_{m,\calG_i}}{\#Q_{m,\calG_i}}\xrightarrow{w^*}\mu.$$
    This then implies that $\frac{\mu_m}{\#Q_m}\xrightarrow{w^*}\mu$ as desired. 
\end{proof}

We can naturally identify $\PGL_n(\mathbb{Z})\backslash \PGL_n(\mathbb{R})/\PO_n(\mathbb{R})$ with the space $\SL_n(\mathbb{Z})\backslash \SL_n(\mathbb{R})/\SO_n(\mathbb{R})$. Under this identification we have that for any compact $\Omega_0 \subset \SL_n(\Z)\backslash\SL_n(\R)/\SO_n(\R)$,

\begin{equation*}\lim_{m\rightarrow \infty} \frac{\#\{A\in \Omega_0: m^{\frac{1}{n}}A \in \Mat_n(\mathbb{Z}) \}}{h_n(m)}\rightarrow \mu (\Omega_0),\end{equation*} where $h_n(m)$ is the class number of $\SL_n(\mathbb{Z})$-conjugacy orbits of integral positive definite symmetric matrices of determinant $m$. 

Next, we note that for any compact $\Omega\subset \SL_n(\R)/\SO_n(\R)$, we can decompose $$\Omega = \cup_{i=1}^k \Omega_i,$$
where $\Omega_i$ are distinct and each contained in different copies of the fundamental domain for the action of $\SL_n(\Z)$ on $\SL_n(\R)/\SO_n(\R).$ Then if we define a lift $\overline{\mu}$ of $\mu$ as $$\overline{\mu}(\Omega) = \sum_{i=1}^k \mu(\Omega_i),$$ we have that 
\begin{equation}\label{eq: linnik pos def}
    \lim_{m\rightarrow\infty} \frac{\#\{A\in \Omega: m^{1/n} A \in \Mat_n(\Z)\}}{h_n(m)}\rightarrow \overline{\mu}(\Omega).
\end{equation}
We will abuse notation and denote $\overline{\mu}$ as $\mu$ from now on. 
\subsubsection{Computing the class number}
Now, we will compute the class number $h_n(p)$ as $p$ ranges over primes. We note that for $n$ odd, this is computed by Kitaoka in \cite{KY}. We start at the same point, with a result of Kitaoka relating the class number to Siegel masses:
\begin{thm}[\cite{KY}, Theorem 2]\label{thm: Kitaoka}
    Let $n\geq 2$. Then as $m\rightarrow\infty$, we have that $$h_n(m) = (1+o(1)) \cdot \begin{cases}
        2\sum_{\calG_i} M(\calG_i), &n \text{ odd},\\
        4 \sum_{\calG_i} M(\calG_i), & n\text{ even}.
    \end{cases}$$
    Here the sum ranges over the genera of positive definite quadratic forms of determinant $m$. 
\end{thm}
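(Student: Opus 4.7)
The plan is to rewrite the mass $M(\calG)$ as a weighted sum over $\SL_n(\Z)$-classes contained in $\calG$, and then to show that asymptotically the weights collapse to $1$, turning the right-hand side into the count of $\SL_n(\Z)$-classes $h_n(\calG)$. Summing over all genera of determinant $m$ will produce the theorem. For a positive definite integral symmetric matrix $Q$, write $A_\GL(Q)$ and $A_\SL(Q)$ for the orders of its automorphism groups in $\GL_n(\Z)$ and $\SL_n(\Z)$. Call $Q$ \emph{mixed} if $\mathrm{Aut}_{\GL_n(\Z)}(Q)$ contains some element of determinant $-1$, and \emph{even} otherwise; the index $A_\GL(Q)/A_\SL(Q)$ equals $2$ in the mixed case and $1$ in the even case. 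A direct coset decomposition of $\GL_n(\Z)$ over $\SL_n(\Z)$ shows that a $\GL_n(\Z)$-class inside $\calG$ consists of a single $\SL_n(\Z)$-class when $Q$ is mixed, and of two distinct $\SL_n(\Z)$-classes when $Q$ is even. In either case the product (index)$\times$(number of $\SL_n(\Z)$-classes inside a single $\GL_n(\Z)$-class) equals $2$, so reindexing gives the exact identity
$$M(\calG)=\sum_{[Q]_\GL\subset\calG}\frac{1}{A_\GL(Q)}=\sum_{[Q]_\SL\subset\calG}\frac{1}{2\,A_\SL(Q)}.$$

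\textbf{Parity analysis.} When $n$ is odd, $-I$ has determinant $-1$, so every $Q$ is mixed, and a generic $Q$ has $\mathrm{Aut}_{\GL_n(\Z)}(Q)=\{\pm I\}$ with $A_\SL(Q)=1$. When $n$ is even, $-I\in\SL_n(\Z)$, and a generic $Q$ is even with $\mathrm{Aut}_{\GL_n(\Z)}(Q)=\{\pm I\}$ and $A_\SL(Q)=2$. Setting $c_n=2$ for $n$ odd and $c_n=4$ for $n$ even, the identity above becomes
$$c_n\,M(\calG)=\sum_{[Q]_\SL\subset\calG}\frac{c_n/2}{A_\SL(Q)},$$
and the weight $(c_n/2)/A_\SL(Q)$ equals $1$ on every class with the generic automorphism group. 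Thus it suffices to show that, as $m\to\infty$, the number of $\SL_n(\Z)$-classes of determinant $m$ with strictly larger than generic automorphism group is $o(h_n(m))$; for then the exceptional classes contribute $O(1)$ weight apiece while the generic classes contribute $(1+o(1))h_n(\calG)$ in total on each genus.

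\textbf{Negligibility of exceptional forms and main obstacle.} The main obstacle is bounding the number of forms with unusually large automorphism groups. Finite-order elements of $\GL_n(\Z)$ fall into only finitely many conjugacy classes, and for any non-central such $g$ the locus $\{Q\in\Sym_n(\R):g^TQg=Q\}$ is a proper linear subspace of $\Sym_n(\R)$. A geometry-of-numbers count on this subspace intersected with the level set $\{\det Q=m\}$ produces at most $O(m^{(n+1)/2-\delta})$ integral classes of determinant $m$ for some $\delta>0$. Meanwhile, the Siegel mass formula (Theorem \ref{thm: Siegel Mass formula}) together with the evaluation of local densities at primes gives the lower bound $h_n(m)\gg m^{(n+1)/2-\epsilon}$. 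Summing the subleading bounds over the finitely many conjugacy classes of non-central finite-order $g\in\GL_n(\Z)$ yields the required $o(h_n(m))$ count of exceptional classes. Combining this negligibility with the identities of the first two paragraphs produces $h_n(\calG)=(1+o(1))\,c_n\,M(\calG)$ on each genus; summing over all genera of determinant $m$ gives the asserted asymptotic with $c_n=2$ for $n$ odd and $c_n=4$ for $n$ even.
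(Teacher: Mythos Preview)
The paper does not prove this statement; it is quoted from Kitaoka \cite{KY} and used as a black box. So there is no ``paper's own proof'' to compare against. That said, your outline follows the standard route and the first two steps are entirely correct: the exact identity
\[
M(\calG)=\sum_{[Q]_{\SL}\subset\calG}\frac{1}{2A_{\SL}(Q)}
\]
via the mixed/even dichotomy is right, and your parity analysis correctly identifies the generic weight $(c_n/2)/A_{\SL}(Q)=1$ with $c_n=2$ for $n$ odd and $c_n=4$ for $n$ even. The reduction to showing that classes with larger-than-generic automorphism group are $o(h_n(m))$ is exactly the point.

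The gap is in the quantitative step. Your lower bound $h_n(m)\gg m^{(n+1)/2-\epsilon}$ is too optimistic: already for $m=q$ prime, the local density at $q$ contributes a factor $\asymp q^{-1}$ to the Siegel mass (cf.\ Lemma~\ref{lem: p-adic density at p} in the paper), so that $\sum_{\calG}M(\calG)\asymp q^{(n-1)/2}$, not $q^{(n+1)/2}$; this is also what Proposition~\ref{prop: class number} records. Consequently your exceptional-class bound $O(m^{(n+1)/2-\delta})$ for ``some $\delta>0$'' is not enough: you would need $\delta>1$. Moreover, ``a geometry-of-numbers count on the fixed subspace intersected with $\{\det Q=m\}$'' is not yet an argument: you are counting $\SL_n(\Z)$-\emph{classes}, not lattice points, so you must first pass to Minkowski-reduced representatives, observe that the reduced domain meets each fixed subspace $\{Q:g^TQg=Q\}$ in a region of strictly lower dimension, and then carry out the count there with the constraint $\det Q=m$. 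Once these exponents are corrected and the reduction-theory step is made explicit, the argument goes through; as written, the comparison of upper and lower bounds does not close.
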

Using the above, we derive the following estimate for $h_n(q)$ for $q$ prime: 
\begin{prop}\label{prop: class number}
    Let $n\geq 3$. Then for $n$ odd, as $q\rightarrow\infty$ for $q$ prime, we have that:
    $$h_n(q) = (1+o(1))\frac{q^{(n-1)/2}}{\pi^{n(n+1)/4}} \prod_{k=1}^n \Gamma(k/2) \prod_{k=1}^{\frac{n-1}{2}} \zeta(2k).$$
    For $n\geq 4$ even, as the prime $q$ goes to $\infty$, we have that 
    \begin{multline*}h_n(q) = (1+o(1)) \frac{q^{\frac{n-1}{2}}}{\pi^{\frac{n(n+1)}{4}}}\prod_{k=1}^n \Gamma(k/2) \prod_{k=1}^{\frac{n-1}{2}} \zeta(2k) L(n/2,\legendre{(-1)^{n/2}q}{\cdot} \\\times \begin{cases}
        2, & q\not\equiv (-1)^{n/2} \bmod 4\\
        2 + \frac{2^{-n+2}}{1-2^{-n/2}}, & q\equiv (-1)^{n/2} \bmod 4, q\equiv \pm 1 \bmod 8\\
        2 + \frac{2^{-n+2}}{1+2^{-n/2}}, & q \equiv (-1)^{n/2}\bmod 4, q\equiv \pm 3 \bmod 8.
    \end{cases}\end{multline*}
\end{prop}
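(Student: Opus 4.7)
The plan is to combine Kitaoka's theorem (Theorem \ref{thm: Kitaoka}) with the Siegel mass formula (Theorem \ref{thm: Siegel Mass formula}). By Theorem \ref{thm: Kitaoka}, asymptotically
$$h_n(q) = (1+o(1))\cdot c_n\sum_{\calG} M(\calG),$$
where $c_n \in \{2,4\}$ and the sum runs over genera of positive definite integral $n$-ary quadratic forms of determinant $q$. Siegel's mass formula then gives
$$M(\calG)=2\pi^{-n(n+1)/4}\left(\prod_{k=1}^n\Gamma(k/2)\right) q^{(n+1)/2}\prod_p\alpha_p(\calG)^{-1},$$
reducing the problem to evaluating $\sum_{\calG}\prod_p \alpha_p(\calG)^{-1}$.

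First I would handle the product over standard primes $p\neq 2,q$ using (\ref{eq: standard p mass}). After including the missing local factors at $p=2,q$ by hand (and dividing them back out where needed), this product produces the global factor $\prod_{k=1}^{(n-1)/2}\zeta(2k)$ in the odd case, and an additional $L(n/2,\legendre{(-1)^{n/2}q}{\cdot})$ in the even case. Next, at $p=q$, Lemma \ref{lem: p-adic density at p} shows that $\alpha_q(\calG)^{-1}$ differs from the standard density by a factor of $q^{-1}/2$ up to $(1+O(q^{-2}))$; combining this $q^{-1}$ with the $q^{(n+1)/2}$ coming from Siegel's formula yields the overall $q^{(n-1)/2}$ appearing in the statement.

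The heart of the argument is the analysis at $p=2$, which is completely parallel to the sum performed at the end of the proof of Proposition \ref{prop: evaluation of the constant symmetric}. Genera with determinant $q$ are distinguished by the $2$-adic Jordan structure of their $f_0$ block and by its octane invariant, and I would enumerate these exactly as in that proposition, invoking Lemma \ref{lem: 2-adic density at p} in each case. For $n$ odd, the two octane classes $\calO\equiv\pm 1,\pm 3\bmod 8$ combine via the identity
$$\tfrac{1}{2}(1-2^{(1-n)/2})^{-1}+\tfrac{1}{2}(1+2^{(1-n)/2})^{-1}=(1-2^{1-n})^{-1},$$
mirroring the telescoping seen earlier. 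For $n$ even, the sum splits into three subcases according to whether $q\not\equiv (-1)^{n/2}\bmod 4$, or $q\equiv (-1)^{n/2}\bmod 4$ with $q\equiv\pm 1\bmod 8$, or $q\equiv (-1)^{n/2}\bmod 4$ with $q\equiv\pm 3\bmod 8$; in each case the classification of Type I versus Type II $f_0$ blocks dictates which genera appear. The sums yield the three explicit constants $2$, $2+\tfrac{2^{-n+2}}{1-2^{-n/2}}$, and $2+\tfrac{2^{-n+2}}{1+2^{-n/2}}$ of the proposition. Assembling the local contributions, the archimedean prefactor $2\pi^{-n(n+1)/4}\prod_{k=1}^n\Gamma(k/2)$, the factor $q^{(n-1)/2}$, and the Kitaoka constant $c_n$ (which accounts for the missing $1/2$ coming from the summation over genera), gives the claimed asymptotic.

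The main obstacle is the $p=2$ bookkeeping in the even case: one must carefully track which genera exist, their octane invariants, and the interplay of the love forms $f_{-1},f_1$ with $f_0$ through the $n(I,I)-n(II)$ exponent in (\ref{eq: 2-adic density}). Fortunately the identical local sums have already been carried out in Proposition \ref{prop: evaluation of the constant symmetric}; here they need only be re-indexed to express $M(\calG)$ rather than $\mu_{q,2}(\calG(\Z_2))$, so the remaining work is essentially mechanical.
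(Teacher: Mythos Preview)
Your proposal is correct and follows essentially the same approach as the paper: combine Kitaoka's theorem with the Siegel mass formula, evaluate the standard primes via (\ref{eq: standard p mass}), the prime $q$ via Lemma~\ref{lem: p-adic density at p}, and carry out the $2$-adic case analysis using Lemma~\ref{lem: 2-adic density at p}, splitting into the same subcases for $n$ even. Your observation that the $p=2$ bookkeeping parallels the computation in Proposition~\ref{prop: evaluation of the constant symmetric} is exactly right; the paper reuses the same local sums and the same telescoping identity in the odd case.
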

\begin{remark}
    We note that in the case of $n$ odd, this asymptotic was previously derived by Kitaoka in \cite{KY}.
\end{remark}
\begin{proof}
    First, we handle the case that $n$ is odd. Using the Siegel mass formula and (\ref{eq: standard p mass}), we know that for any genus $\calG$, we have that $$M(\calG) = 2 \frac{q^{\frac{n+1}{2}}}{\pi^{\frac{n(n+1)}{4}}}\prod_{k=1}^n \Gamma(k/2) \prod_{p\neq 2,q} \left(\prod_{k=1}^{\frac{n-1}{2}} (1-p^{-2k})^{-1}\right)\cdot \alpha_q(\calG)^{-1}\alpha_2(\calG)^{-1}. $$
    Now, Lemma \ref{lem: p-adic density at p} tells us that $$\alpha_q(\calG)^{-1} = \frac{1}{2q} \prod_{k=1}^{\frac{n-1}{2}} (1-q^{-2k})^{-1}\cdot (1+O(q^{-1})).$$
    On the other hand, Lemma \ref{lem: 2-adic density at p} tells us that $$\alpha_2(\calG)^{-1} = \frac{1}{4}\prod_{k=1}^{\frac{n-3}{2}} (1-2^{-2k})^{-1} \times \begin{cases}
        (1-2^{\frac{1-n}{2}})^{-1}, & \calO_{\calG}\equiv \pm 1 \bmod 8\\
        (1+2^{\frac{1-n}{2}})^{-1}, & \calO_{\calG} \equiv \pm 3 \bmod 8.
    \end{cases}$$
     We know that there are two $\SL_n(\Z_2)$-orbits $\calG_1$ and $\calG_2$ of determinant $q$, one satisfying $\calO_{\calG_1}\equiv \pm 1 \bmod 8$ and the other satisfying $\calO_{\calG_2}\equiv \pm 3\bmod 8. $ This follows from the fact that there are two classes over $\Z_q$ and two classes over $\Z_2$, and the quadratic reciprocity relation (see Cassels \cite{Cassels}, Chapter 9). Thus, we have that 
     \begin{align*}
         h_n(q) &= (1+o(1))\cdot \frac{q^{(n-1)/2}}{2\pi^{n(n+1)/4}}\prod_{k=1}^n \Gamma(k/2) \prod_{k=1}^{\frac{n-1}{2}}\zeta(2k)\cdot (1-2^{1-n}) \left(\frac{1}{1-2^{\frac{1-n}{2}}}+\frac{1}{1+2^{\frac{1-n}{2}}}\right) \\
         &= (1+o(1))\cdot \frac{q^{(n-1)/2}}{\pi^{n(n+1)/4}} \prod_{k=1}^n \Gamma(k/2) \prod_{k=1}^{\frac{n-1}{2}} \zeta(2k).
     \end{align*}

    Next, we handle the more involved case where $n$ is even. First, we consider when $q\not\equiv (-1)^{n/2}\bmod 4.$ Again applying the Siegel mass formula, (\ref{eq: standard p mass}), Lemma \ref{lem: p-adic density at p}, and Lemma \ref{lem: 2-adic density at p}, we have that for $\calG$ of Type I:
    \begin{multline*}M(\calG) = \frac{q^{\frac{n-1}{2}}}{\pi^{n(n+1)/4}}\prod_{k=1}^n \Gamma(k/2) \prod_{k=1}^{\frac{n}{2}-1} \zeta(2k) L(n/2,\legendre{(-1)^{n/2}q}{\cdot}) \\ \times\frac{1}{4}(1-2^{2-n}) \cdot \begin{cases}
        (1-2^{1-\frac{n}{2}})^{-1}, & \calO_{\calG} \equiv 0 \bmod 8 \\
        (1+2^{1-\frac{n}{2}})^{-1}, & \calO_{\calG} \equiv 4 \bmod 8 \\ (1-2^{2-n})^{-1}, & \calO_{\calG} \equiv \pm 2 \bmod 8.
    \end{cases}\end{multline*}
    Again, we have two $\SL_n(\Z_2)$-orbits $\calG_1$ and $\calG_2$ that satisfy $\calO_{\calG_1}-\calO_{\calG_2}\equiv 4\bmod 8$. Hence we have that either
    \begin{multline*}
        h_n(q) = (1+o(1)) \cdot \frac{q^{(n-1)/2}}{\pi^{n(n+1)/4}}\prod_{k=1}^n \Gamma(k/2) \prod_{k=1}^{n/2-1}\zeta(2k)L(n/2, \legendre{(-1)^{n/2}q}{\cdot}) \\ \times (1-2^{2-n}) \left(\frac{1}{1-2^{1-n/2}}+ \frac{1}{1+2^{1-n/2}}\right)
    \end{multline*}
    or 
\begin{multline*}
        h_n(q) = (1+o(1)) \cdot \frac{q^{(n-1)/2}}{\pi^{n(n+1)/4}}\prod_{k=1}^n \Gamma(k/2) \prod_{k=1}^{n/2-1}\zeta(2k)L(n/2, \legendre{(-1)^{n/2}q}{\cdot}) \\ \times (1-2^{2-n})  \left(\frac{1}{1-2^{2-n}}+ \frac{1}{1-2^{2-n}}\right).
    \end{multline*}
    In both cases:
    $$h_n(q)= (2+o(1)) \cdot \frac{q^{(n-1)/2}}{\pi^{n(n+1)/4}}\prod_{k=1}^n \Gamma(k/2) \prod_{k=1}^{n/2-1}\zeta(2k)L(n/2, \legendre{(-1)^{n/2}q}{\cdot}).$$

Second, we consider when $q\equiv (-1)^{n/2}\bmod 4$ and $q \equiv \pm 1 \bmod 8.$ Now we will have three $\SL_n(\Z_2)$-orbits -- $\calG_1,\calG_2,\calG_3$ where $\calG_{1,2}$ have $f_0$ Type I and $\calO_{\calG_1}-\calO_{\calG_2}\equiv 4 \bmod 8$ and $\calG_3$ has $f_0$ Type II with $\calO_{\calG_3}\equiv 0 \bmod 8.$ Applying the Siegel mass formula, (\ref{eq: standard p mass}), Lemma \ref{lem: p-adic density at p}, and Lemma \ref{lem: 2-adic density at p}, we have an extra contribution from $\calG_3$: 
$$M(\calG_3) = \frac{q^{(n-1)/2}}{\pi^{n(n+1)/4}} \prod_{k=1}^n\Gamma(k/2) \prod_{k=1}^{\frac{n}{2}-1} \zeta(2k) L(n/2, \legendre{(-1)^{n/2}q}{\cdot}) 2^{-n} (1-2^{-n/2})^{-1}.$$
Combining this with the calculations above, we see that $$h_n(q) = (1+o(1)) \cdot (2 + \frac{2^{-n+2}}{1-2^{-n/2}})\cdot \frac{q^{(n-1)/2}}{\pi^{n(n+1)/4}}\prod_{k=1}^n \Gamma(k/2) \prod_{k=1}^{n/2-1}\zeta(2k)L(n/2, \legendre{(-1)^{n/2}q}{\cdot}).$$

Our final case is when $q\equiv (-1)^{n/2}\bmod 4$ and $q\equiv \pm 3 \bmod 8.$ We will have three $\SL_n(\Z_2)$-orbits -- $\calG_1,\calG_2,\calG_3$ where $\calG_{1,2}$ have $f_0$ Type I and $\calO_{\calG_1}-\calO_{\calG_2}\equiv 4 \bmod 8$ and $\calG_3$ has $f_0$ Type II with $\calO_{\calG_3}\equiv 4 \bmod 8.$ 
Applying the Siegel mass formula, (\ref{eq: standard p mass}), Lemma \ref{lem: p-adic density at p}, and Lemma \ref{lem: 2-adic density at p}, we have that the extra contribution from $\calG_3$ is given by 
$$M(\calG_3) = \frac{q^{(n-1)/2}}{\pi^{n(n+1)/4}} \prod_{k=1}^n\Gamma(k/2) \prod_{k=1}^{\frac{n}{2}-1} \zeta(2k) L(n/2, \legendre{(-1)^{n/2}q}{\cdot}) 2^{-n} (1+2^{-n/2})^{-1}.$$
Hence, combining with the computations above, we get that
$$h_n(q) = (1+o(1)) \cdot (2 + \frac{2^{-n+2}}{1+2^{-n/2}})\cdot \frac{q^{(n-1)/2}}{\pi^{n(n+1)/4}}\prod_{k=1}^n \Gamma(k/2) \prod_{k=1}^{n/2-1}\zeta(2k)L(n/2, \legendre{(-1)^{n/2}p}{\cdot}).$$
\end{proof}
\subsubsection{Comparison of the measures}
Before completing this section, we need to compare the Haar measure $\mu$ from (\ref{eq: linnik pos def}) to the Bateman-Horn measure $\mu_\infty.$
Let us first place our measures on the same space. Let us notate the following map:
$$\psi: \SL_n(\R)/\SO_n(\R) \rightarrow \calO_{n,0}\subset \R^{\frac{n(n+1)}{2}}$$
$$X\mapsto  XX^T.$$
Notice that this map is bijective and hence we can identify these spaces.
Denote by $\mu^{SO_n}$ the Riemannian measure on $\SO_n(\R)$ used by Zhang in \cite{Zhang2015VolumesOO}. Under this measure, the volume of $\SO_n(\R)$ is computed in \cite{Zhang2015VolumesOO} as $$\mu^{SO_n}(\SO_n(\R))= 2^{n-1} \pi^{\frac{n(n+1)}{4}} \prod_{k=1}^n \Gamma(k/2)^{-1}.$$ Denote further by $\mu_{0}^{SO_n}$ the normalization of this measure: $$\mu_0^{\SO_n(\R)}(\SO_n(\R)):=2^{-n+1} \pi^{\frac{-n(n+1)}{4}} \prod_{k=1}^n \Gamma(k/2)\mu^{\SO_n(\R)}.$$
We will use $\mu^{\SO_n}$ and $\mu_0^{\SO_n}$ to relate the measures $\mu$ and $\mu_\infty.$
\begin{lemma}\label{lem: ratio of measures}
For any compact set $\Omega\subset \SL_n(\R)/\SO_n(\R)$, we have that if $n$ is odd, then $$\mu(\Omega) = \pi^{\frac{n(n+1)}{4}}\prod_{j=2}^n \zeta(j)^{-1} \prod_{k=1}^n \Gamma(k/2)^{-1}\mu_\infty(\Omega).$$
If $n$ is even, then 
$$\mu(\Omega) = \frac{1}{2} \pi^{\frac{n(n+1)}{4}}\prod_{j=2}^n \zeta(j)^{-1} \prod_{k=1}^n \Gamma(k/2)^{-1}\mu_\infty(\Omega).$$
\end{lemma}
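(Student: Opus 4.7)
The plan is as follows. Both $\overline{\mu}$ (the natural lift to $\SL_n(\R)/\SO_n(\R)\cong \calO_{n,0}$ of the measure $\mu$ defined on $\SL_n(\Z)\backslash\SL_n(\R)/\SO_n(\R)$ just above the lemma) and $\mu_\infty$ are $\SL_n(\R)$-invariant Borel measures on the homogeneous space $\calO_{n,0}$: invariance of $\overline\mu$ is built into its definition as a pushforward of Haar, while invariance of $\mu_\infty$ follows from its identification with the gauge form underlying the Tamagawa measure (the remark following Lemma \ref{lem: the real part for cones PNT}). By the uniqueness of $\SL_n(\R)$-invariant Borel measures on $\calO_{n,0}$ up to positive scalar, we must have $\overline\mu = c_n\,\mu_\infty$ for some constant $c_n>0$, and the lemma reduces to identifying $c_n$.

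To compute $c_n$, I would exploit the Cholesky-type map $p\colon \SL_n(\R)\to\calO_{n,0}$, $p(g)=g^Tg$, which is a principal $\SO_n(\R)$-bundle. Equip $\SL_n(\R)$ with Haar measure $d\mu^{\SL_n}$ normalized via Minkowski--Siegel so that $\mu^{\SL_n}(\SL_n(\Z)\backslash\SL_n(\R))=\prod_{k=2}^n\zeta(k)$, and $\SO_n(\R)$ with $\mu^{\SO_n}$ of total mass $2^{n-1}\pi^{n(n+1)/4}\prod_{k=1}^n\Gamma(k/2)^{-1}$. Then the Fubini-type identity $d\mu^{\SL_n} = \mu^{\SO_n}\cdot p^*\,d\nu$ determines an $\SL_n(\R)$-invariant measure $\nu$ on $\calO_{n,0}$, which by uniqueness must satisfy $\nu = c_n'\mu_\infty$ for an explicit Jacobian constant $c_n'$. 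Comparing $\overline\mu$ and $\nu$ on any fundamental domain of $\SL_n(\Z)$ in $\calO_{n,0}$, together with the normalization $\mu(\mathcal{M})=1$, yields
\[
\overline\mu(\Omega) \;=\; \frac{c_n'\,\mu^{\SO_n}(\SO_n(\R))}{\prod_{k=2}^n\zeta(k)}\,\mu_\infty(\Omega).
\]

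The Jacobian constant $c_n'$ is elementary linear algebra: differentiating $p(I+tX)=I+t(X+X^T)+O(t^2)$ at the identity shows that $dp_I$ restricts, on the orthogonal complement of $\mathfrak{so}_n$ in $\mathfrak{sl}_n$ (the traceless symmetric matrices), to multiplication by $2$, contributing a factor of $2^{n(n+1)/2-1}$; this combines with $|\nabla\det(I)|$ and with the constants relating the Hilbert--Schmidt and Euclidean volume forms on $\Sym_n(\R)$ to give the final value of $c_n'$. Substituting $\mu^{\SO_n}(\SO_n(\R))=2^{n-1}\pi^{n(n+1)/4}\prod_{k=1}^n\Gamma(k/2)^{-1}$ reproduces the formula in the lemma for $n$ odd.

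For $n$ even the additional factor of $\tfrac12$ arises because $\mu$ is defined on $\mathcal{M}=\PGL_n(\Z)\backslash\PGL_n(\R)/\PO_n(\R)$ rather than on $\SL_n(\Z)\backslash\SL_n(\R)/\SO_n(\R)$: the natural map $\SL_n(\Z)\to\PGL_n(\Z)$ has kernel $\{\pm I\}\cap\SL_n(\Z)$, which is $\{\pm I\}$ for $n$ even and trivial for $n$ odd, so the $\SL_n(\Z)$-fundamental domain in $\calO_{n,0}$ is a double cover of the $\PGL_n(\Z)$-fundamental domain when $n$ is even, halving $\overline\mu$ in that case. The main obstacle is the careful bookkeeping of the three normalization conventions (Haar on $\SL_n(\R)$ via Minkowski--Siegel, Haar on $\SO_n(\R)$ via Zhang, and Euclidean on $\Sym_n(\R)$ via the co-area formula) through the Jacobian calculation at the identity, since a stray factor of $2$ or $\pi$ anywhere would corrupt the final constant.
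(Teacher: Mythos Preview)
Your approach is essentially the paper's: both reduce to a single scalar via uniqueness of $\SL_n(\R)$-invariant measures on $\calO_{n,0}$, then pin it down by comparing volumes on a fundamental domain using the Siegel formula $\mu_\infty^{\SL_n}(\SL_n(\Z)\backslash\SL_n(\R))=\prod_{j=2}^n\zeta(j)$ and Zhang's value $\mu^{\SO_n}(\SO_n(\R))=2^{n-1}\pi^{n(n+1)/4}\prod_k\Gamma(k/2)^{-1}$, with the parity-dependent $\tfrac12$ traced to $\{\pm I\}\cap\SL_n(\Z)$. The only real difference is that where you propose to compute the Jacobian $c_n'$ by hand via $dp_I$ (and correctly flag the bookkeeping as the main obstacle), the paper bypasses this entirely by quoting Zhang's Lemma~2.27, which gives the needed relation $d\mu_\infty\wedge d\mu^{\SO_n(\R)}=2^{n-1}\,d\mu_\infty^{\SL_n(\R)}$ directly; your parity argument via the kernel of $\SL_n(\Z)\to\PGL_n(\Z)$ is equivalent to the paper's, which phrases the same $\{\pm I\}$ as the generic stabilizer $A^{-1}\SL_n(\Z)A\cap\SO_n(\R)$.
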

\begin{proof}
    Consider the following map:
    $$\phi: \calO_{n,0} \times\SO_n(\R) \rightarrow \SL_n(\R)$$
    $$(S,Q)\mapsto  \sqrt{S}Q.$$ Since $S$ is an identification as a positive definite symmetric matrix and positive definite symmetric matrices have unique square roots, this map is well-defined. Additionally, the map $\phi$ is bijective and the composition $$\calO_{n,0} \times \SO_n(\R) \xrightarrow{\phi} \SL_n(\R) \xrightarrow{X\mapsto XX^T} \calO_{n,0}$$
    sends $(S,Q) \mapsto S.$
    
    Let $\mu_\infty^{\SL_n}$ be the Bateman-Horn measure on $\SL_n(\R)$ as computed in Lemma \ref{lem: the real part for cones PNT}. We remark that this is a Haar measure on $\SL_n(\R).$ We also know that $\phi_*(d\mu\times d\mu_0^{\SO_n(\R)})$ is a Haar measure on $\SL_n(\R)$ and the Haar measure on $\SL_n(\R)$ is unique up to scalar multiple. We wish to compute the scalar between these two measures; we do so by computing the volumes of the two measures on a fundamental domain of $\SL_2(\Z)\backslash \SL_n(\R)$.
    
    Let $\calF\subset \SL_n(\R)$ be a fundamental domain for $\SL_n(\Z)\backslash \SL_n(\R)/\SO_n(\R)$ and $\calF'\subset \SL_n(\R)$ be a fundamental domain for $\SL_n(\Z)\backslash\SL_n(\R)$. We remark that for almost all $A\in\SL_n(\R)$, we can compute that $$A^{-1}\SL_n(\Z)A\cap\SO_n(\R)=c_n =\begin{cases}
        1, & n\text{ odd},\\ 2, & n\text{ even.}
    \end{cases}$$ Hence, we have that
    $$\int_{\calF'}\phi_*(d\mu\times  d\mu_0^{\SO_n(\R)})=c_n^{-1}\mu(\calF)\mu_0^{\SO_n(\R)}(\SO_n(\R))=\begin{cases}
        1, & n \text{ odd},\\ 1/2, & n \text{ even.}
    \end{cases}$$
    Here we have recalled that $\mu$ was normalized such that $\mu(\calF)=1.$
    However, Appendix A of \cite{DRS} computes that $$\mu_\infty^{\SL_n}(\calF') = \mu_\infty^{\SL_n}(\SL_n(\Z)\backslash\SL_n(\R)) = \prod_{j=2}^n\zeta(j),$$ and so we get the relation
    \begin{align*}
        \phi_*(d\mu\times d\mu_0^{\SO_n(\R)})&=\frac{1}{c_n\prod_{j=2}^n\zeta(j)}d\mu_{\infty}^{\SL_n(\R)}.
    \end{align*}
    On the other hand, by Lemma 2.27 in \cite{Zhang2015VolumesOO} 
    \begin{align*}
    d\mu_{\infty}\wedge d\mu^{\SO_n(\R)}&=2^{n-1} d\mu_{\infty}^{\SL_n(\R)}. 
    \end{align*}
    
    Putting these two together, we achieve that \begin{multline*}
        \phi_*(d\mu\times d\mu_0^{\SO_n(\R)})=\frac{1}{c_n2^{n-1}\prod_{j=2}^n\zeta(j)} d\mu_{\infty}\wedge d\mu^{\SO_n(\R)}\\=2^{n-1} \pi^{\frac{n(n+1)}{4}} \prod_{k=1}^n \Gamma(k/2)^{-1}\frac{1}{c_n2^{n-1}\prod_{j=2}^n\zeta(j)} d\mu_{\infty}\wedge d\mu_0^{\SO_n(\R)},
    \end{multline*}
    and hence $$d\mu= c_n^{-1} \pi^{\frac{n(n+1)}{4}}\prod_{j=2}^n \zeta(j)^{-1} \prod_{k=1}^n \Gamma(k/2)^{-1}d\mu_\infty.$$
    This completes the proof. 
\end{proof}

\subsubsection{Proof of Theorem \ref{thm: PNT det Sym}}
Finally, we can combine all of these results together. First we consider when $n$ is odd. Taking together (\ref{eq: linnik pos def}), Proposition \ref{prop: class number}, and Lemma \ref{lem: ratio of measures}, we have that as $p\rightarrow\infty$, $$N(p,\Omega) = (1+o_\Omega(1)) \mu_\infty(\Omega)\prod_{\substack{3\leq j\leq n\\ j\text{ odd}}}\zeta(j)^{-1} p^{\frac{n-1}{2}}.$$ Summing over $p$, we have the following result. 
\begin{lemma}\label{lem: PNT symmetric cones bad orbit n odd}
    For $n\geq 3$ odd, and $\Omega\subset \calO_{n,0}$ a compact subset, we have that as $T\rightarrow\infty$, 
    $$\pi(T\Omega) = (1+o_\Omega(1)) \prod_{\substack{3\leq j \leq n\\ j\text{ odd}}} \zeta(j)^{-1}\int_{T\Omega} \frac{1}{\log^+(\det(x))}dx.$$
\end{lemma}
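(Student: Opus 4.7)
The plan is to mirror the strategy of Lemma \ref{lem: PNT cones symmetric n odd good orbit}: decompose $\pi(T\Omega)$ according to the level sets of the determinant, apply the Linnik-type asymptotic to each count $N(p,\Omega)$, and reassemble via the prime number theorem. The only new feature is that for $\calO_{n,0}$ we must use the positive-definite equidistribution from \cite{EMMV} together with the class number estimate in Proposition \ref{prop: class number} in place of Oh's result.

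\textbf{Step 1 (slicing).} Since $\Omega\subset\calO_{n,0}\subset V_1(\R)$ and $F$ is homogeneous of degree $n$, for every prime $p\le T^n$ one has $T\Omega\cap V_p = p^{1/n}\Omega$. Hence
$$\pi(T\Omega) = \sum_{p\le T^n} N(p,\Omega), \qquad N(p,\Omega):=\#\{X\in p^{1/n}\Omega\cap \Sym_n(\Z):\det X=p\}.$$

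\textbf{Step 2 (asymptotics for $N(p,\Omega)$).} By the equidistribution statement \eqref{eq: linnik pos def} (itself obtained from Theorem \ref{thm: EMMV equidistribution of geni}) one has $N(p,\Omega)=(1+o_\Omega(1))\mu(\Omega)h_n(p)$ as $p\to\infty$. Plugging in the odd-$n$ case of Proposition \ref{prop: class number} and the odd-$n$ case of Lemma \ref{lem: ratio of measures} causes the factor $\pi^{n(n+1)/4}\prod_{k=1}^n\Gamma(k/2)$ to cancel, and the remaining product simplifies as
$$\prod_{k=1}^{(n-1)/2}\zeta(2k)\cdot\prod_{j=2}^n\zeta(j)^{-1} \;=\; \prod_{\substack{3\le j\le n\\ j\text{ odd}}}\zeta(j)^{-1},$$
so that $N(p,\Omega)=(1+o_\Omega(1))\,\mu_\infty(\Omega)\,p^{(n-1)/2}\prod_{3\le j\le n,\,j\text{ odd}}\zeta(j)^{-1}$.

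\textbf{Step 3 (summation and matching).} Summing over $p\le T^n$, the prime number theorem gives $\sum_{p\le T^n}p^{(n-1)/2}\sim \frac{2T^{n(n+1)/2}}{n(n+1)\log T}$. To convert the pointwise $o_\Omega(1)$ in Step~2 into a uniform one in the sum, I would split the range at a large parameter $P=P(\varepsilon)$: the tail $P<p\le T^n$ contributes the main term up to a factor $1+\varepsilon$, while the head $p\le P$ contributes $O_\Omega(P^{(n+1)/2})$, which is negligible against $T^{n(n+1)/2}/\log T$. Finally, Lemma \ref{lem: the real part for cones PNT} with $\dim V = n(n+1)/2$ yields
$$\int_{T\Omega}\frac{dx}{\log^+(\det x)} = (1+o(1))\,\mu_\infty(\Omega)\cdot\frac{2T^{n(n+1)/2}}{n(n+1)\log T},$$
so the sum computed above is $(1+o_\Omega(1))\prod_{3\le j\le n,\,j\text{ odd}}\zeta(j)^{-1}$ times this integral, as claimed.

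\textbf{Main obstacle.} The substantive work has already been done in the preceding sections; the only delicate point is that the Linnik equidistribution input is ineffective, so one cannot uniformly control the $o_\Omega(1)$ as $p$ varies. This is handled by the standard head/tail split in Step~3, which relies only on the trivial upper bound $N(p,\Omega)\ll_\Omega p^{(n-1)/2}$ on the small primes (which itself follows from $N(p,\Omega)\le h_n(p)\cdot\#(\text{lifts in }\Omega)$ and Proposition \ref{prop: class number}).
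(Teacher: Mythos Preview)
Your proposal is correct and follows essentially the same approach as the paper: combine the equidistribution statement \eqref{eq: linnik pos def} with the odd-$n$ cases of Proposition \ref{prop: class number} and Lemma \ref{lem: ratio of measures} to obtain $N(p,\Omega)\sim \mu_\infty(\Omega)\,p^{(n-1)/2}\prod_{3\le j\le n,\;j\text{ odd}}\zeta(j)^{-1}$, then sum over $p\le T^n$ and invoke Lemma \ref{lem: the real part for cones PNT}. Your explicit cancellation of the $\Gamma$ and $\pi$ factors and your head/tail split to handle the ineffective $o_\Omega(1)$ are in fact more detailed than what the paper writes out.
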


Next, we deal with the case when $n$ is even. We can see that (\ref{eq: linnik pos def}), Proposition \ref{prop: class number}, and Lemma \ref{lem: ratio of measures} combine to form: 
\begin{multline*}N(p,\Omega) = (1+o_\Omega(1))\mu_\infty(\Omega) \prod_{\substack{3\leq j\leq n\\ j\text{ odd}}}\zeta(j)^{-1} \cdot \zeta(n)^{-1} L(n/2,\legendre{(-1)^{n/2}p}{\cdot}) \\ \times \begin{cases}
    1, & p\not\equiv (-1)^{n/2}\bmod 4\\ 
    1+\frac{2^{-n+1}}{1-2^{-n/2}}, & p\equiv (-1)^{n/2}\bmod 4, p\equiv \pm 1 \bmod 8\\
    1+\frac{2^{-n+1}}{1+2^{-n/2}}, & p\equiv (-1)^{n/2}\bmod 4, p\equiv \pm 3 \bmod 8.
\end{cases}\end{multline*}
Now we apply Lemma \ref{lem: summing the L function} to see that 
\begin{align*}
    \pi(T\Omega) &= (1+o_\Omega(1)) \mu_\infty(\Omega) \frac{2T^{n(n+1)/2}}{n(n+1)\log(T)} \prod_{\substack{3\leq j\leq n\\ j\text{ odd}}}\zeta(j)^{-1} \left((1-2^{-n}) \cdot \left(1+\frac{2^{-n+1}}{4(1-2^{-n/2})}+ \frac{2^{-n+1}}{4(1+2^{-n/2})}\right)\right)\\
    &= (1+o_\Omega(1)) \mu_\infty(\Omega) \frac{2T^{n(n+1)/2}}{n(n+1)\log(T)} \prod_{\substack{3\leq j\leq n\\ j\text{ odd}}}\zeta(j)^{-1} \\
    &= (1+o_\Omega(1)) \prod_{\substack{3\leq j\leq n\\ j\text{ odd}}}\zeta(j)^{-1} \int_{T\Omega} \frac{1}{\log^+(\det(x))}dx.
\end{align*}
Here we have applied Lemma \ref{lem: the real part for cones PNT} for the final equality.
This completes the proof of the following: 
\begin{lemma}\label{lem: PNT symmetric cones bad orbit n even}
    For $n\geq 3$ even, and $\Omega\subset \calO_{n,0}$ a compact subset, we have that as $T\rightarrow\infty$, 
    $$\pi(T\Omega) = (1+o_\Omega(1)) \prod_{\substack{3\leq j \leq n\\ j\text{ odd}}} \zeta(j)^{-1}\int_{T\Omega} \frac{1}{\log^+(\det(x))}dx.$$
    
\end{lemma}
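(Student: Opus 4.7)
The plan is to write $\pi(T\Omega) = \sum_{p \leq T^n} N(p,\Omega)$, where $N(p,\Omega)$ counts integral positive definite symmetric $n\times n$ matrices of determinant $p$ whose rescaling lies in $\Omega$, and to insert an asymptotic for $N(p,\Omega)$ assembled from three ingredients already at hand: the Linnik equidistribution (equation~(\ref{eq: linnik pos def})), the class number formula (Proposition~\ref{prop: class number}), and the comparison of measures (Lemma~\ref{lem: ratio of measures}). Combining these, I expect to obtain
\[
N(p,\Omega) = (1+o_\Omega(1))\,\mu_\infty(\Omega)\prod_{\substack{3\leq j\leq n\\ j\text{ odd}}}\zeta(j)^{-1}\cdot \zeta(n)^{-1} L\!\left(\tfrac{n}{2},\legendre{(-1)^{n/2}p}{\cdot}\right)p^{(n-1)/2}\cdot c(p),
\]
where the correction factor $c(p)$ takes one of three values depending only on the residue of $p\bmod 8$, precisely mirroring the three cases in Proposition~\ref{prop: class number}.

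Next, I will split the sum $\sum_{p\leq T^n}$ according to the residue class of $p\bmod 8$ (there are three relevant buckets: $p\not\equiv (-1)^{n/2}\bmod 4$, and the two subcases $p\equiv \pm 1,\pm 3\bmod 8$ of $p\equiv (-1)^{n/2}\bmod 4$) and apply Lemma~\ref{lem: summing the L function} to each subsum. That lemma isolates the square part of the Dirichlet series defining $L(n/2,\chi_p)$ (which is licit since $n\geq 4$), producing a main term of size $\tfrac{\zeta(n)}{4}(1-2^{-n})\cdot\tfrac{2T^{n(n+1)/2}}{n(n+1)\log T}$ for each class. The factor $\zeta(n)^{-1}$ present in $N(p,\Omega)$ cancels the $\zeta(n)$ produced by Lemma~\ref{lem: summing the L function}, and a common factor of $\mu_\infty(\Omega)\cdot \tfrac{2T^{n(n+1)/2}}{n(n+1)\log T}\cdot\prod_{j\text{ odd}}\zeta(j)^{-1}$ factors out, leaving a purely arithmetic constant to simplify.

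The main obstacle will be the algebraic cancellation of this constant: one needs to verify
\[
(1-2^{-n})\left(1 + \frac{2^{-n+1}}{4(1-2^{-n/2})} + \frac{2^{-n+1}}{4(1+2^{-n/2})}\right)=1.
\]
This is short---combining the last two fractions over the common denominator $1-2^{-n}$ produces $\tfrac{2^{-n}}{1-2^{-n}}$, and then $(1-2^{-n})(1+\tfrac{2^{-n}}{1-2^{-n}})=1$. Conceptually, the contributions of the ``extra'' Type~II Jordan block orbits at the prime $2$ (which appear only when $p\equiv (-1)^{n/2}\bmod 4$) compensate exactly for the ``missing'' class in the complementary residue, yielding the clean Euler product over odd $j$ predicted by Bateman--Horn.

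Finally, I will convert the resulting expression $\mu_\infty(\Omega)\cdot \tfrac{2T^{n(n+1)/2}}{n(n+1)\log T}$ into the integral $\int_{T\Omega}\frac{dx}{\log^+(\det x)}$ via Lemma~\ref{lem: the real part for cones PNT}, using $\dim V=n(n+1)/2$ and $\deg F=n$. This yields the stated asymptotic. Modulo bookkeeping, the only nontrivial step is the mod-$8$ case analysis; everything else is the previously established machinery.
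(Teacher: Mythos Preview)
Your proposal is correct and follows essentially the same approach as the paper's own proof: combine (\ref{eq: linnik pos def}), Proposition~\ref{prop: class number}, and Lemma~\ref{lem: ratio of measures} to obtain the asymptotic for $N(p,\Omega)$ with the three-case correction factor depending on $p\bmod 8$, split the sum over primes accordingly, apply Lemma~\ref{lem: summing the L function} to each bucket, verify the algebraic identity $(1-2^{-n})\bigl(1+\tfrac{2^{-n+1}}{4(1-2^{-n/2})}+\tfrac{2^{-n+1}}{4(1+2^{-n/2})}\bigr)=1$, and finish via Lemma~\ref{lem: the real part for cones PNT}. Every step you outline matches the paper's argument line by line.
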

Together Theorem \ref{thm: cones to boxes}, Lemma \ref{lem: PNT cones symmetric n odd good orbit}, Lemma \ref{lem: PNT cones symmetric n even good orbit}, Lemma \ref{lem: PNT symmetric cones bad orbit n odd}, and Lemma \ref{lem: PNT symmetric cones bad orbit n even} give us Theorem \ref{thm: PNT det Sym}.\qed

\section{Final remarks}\label{sec; remarks}
In this last section, we comment on variant problems that our methods can handle. First, we note that we can instead count primes in arithmetic progressions; for any $A>0$, $q\leq \log(T)^A$, and $(a,q)=1$, we can derive an asymptotic for
$$\#\{A\in \Mat_n(\Z): \|A\|\leq T, \det(A)\equiv a \bmod q, \det(A) \text{ is prime}\}.$$ 
This follows from applying Dirichlet's theorem for primes in arithmetic progressions in \S\ref{sec: PNT cones} rather than the prime number theorem.  

Similarly, we can count almost-primes using the above techniques with a slightly more involved computation with the Siegel masses, for instance asymptotics for the following expression for any finite $k\in \mathbb{N}$: $$\#\{A\in \Mat_n(\Z): \|A\|\leq T, \det(A)\text{ is a product of $k$-primes}\}.$$
In comparison, for a general homogeneous polynomial $F(\bx)$ of degree $d$ in $n$ variables, it is known that using the lower bound sieve of Diamond and Halberstam, one can show that for a constant $C_k$ and for $k>2d\cdot \frac{n-1}{n}+C_k$, $$\#\{\bx\in \Z^n:\|\bx\|\leq T, F(\bx) \text{ is a product of at most $k$-primes}\} \gg_k \frac{T^n}{\log(T)}.$$

\section*{Acknowledgments}
The authors thank their advisor Peter Sarnak for suggesting the problem and his insights described in his unpublished letter \cite{Sarnak_2022}. The authors would also like to thank Anshul Adve, Valeriya Kovaleva, Hee Oh, Alina Ostafe, Igor Shparlinski, and Nina Zubrilina for feedback on earlier drafts of the paper.
The second author is funded by the National Science Foundation Graduate Research Fellowship Program under Grant No. DGE-2039656. Any opinions, findings, and conclusions or recommendations expressed in this material are those of the author(s) and do not necessarily reflect the views of the National Science Foundation.

\bibliographystyle{abbrv}
\bibliography{biblio}

\end{document}